\newtheorem{lemma}{Lemma}[section]
\newtheorem{theorem}[lemma]{Theorem}
\newtheorem{proposition}[lemma]{Proposition}
\theoremstyle{definition}
\newtheorem{remark}[lemma]{Remark}
\numberwithin{equation}{section}
\DeclareMathOperator{\Supp}{Supp}
\newcommand{\sumtwo}[2]{\sum_{\substack{#1 \\ #2}}} % sum with 2 lines
\def\captionfont@{\footnotesize}
\def\captionheadfont@{\scshape}
\long\def\@makecaption#1#2{%
  \vspace{2mm}
  \setbox\@tempboxa\vbox{\color@setgroup
    \advance\hsize-6pc\noindent
    \captionfont@\captionheadfont@#1\@xp\@ifnotempty\@xp
        {\@cdr#2\@nil}{.\captionfont@\upshape\enspace#2}%
    \unskip\kern-6pc\par
    \global\setbox\@ne\lastbox\color@endgroup}%
  \ifhbox\@ne % the normal case
    \setbox\@ne\hbox{\unhbox\@ne\unskip\unskip\unpenalty\unkern}%
  \fi
  \ifdim\wd\@tempboxa=\z@ % this means caption will fit on one line
    \setbox\@ne\hbox to\columnwidth{\hss\kern-6pc\box\@ne\hss}%
  \else % tempboxa contained more than one line
    \setbox\@ne\vbox{\unvbox\@tempboxa\parskip\z@skip
        \noindent\unhbox\@ne\advance\hsize-6pc\par}%
\fi
  \ifnum\@tempcnta<64 % if the float IS a figure...
    \addvspace\abovecaptionskip
    \moveright 3pc\box\@ne
  \else % if the float IS nOT a figure...
    \moveright 3pc\box\@ne
    \nobreak
    \vskip\belowcaptionskip
  \fi
\relax
}
\def\writefig#1 #2 #3 {\rlap{\kern #1 truecm
\raise #2 truecm \hbox{#3}}}
\newcommand{\ga}{\alpha}
\newcommand{\gb}{\beta}
\newcommand{\gd}{\delta}
\newcommand{\gep}{\varepsilon}       % \geq already exists...
\newcommand{\gp}{\varphi}
\newcommand{\gk}{\kappa}
\newcommand{\go}{\omega}
\newcommand{\gl}{\lambda}
\newcommand{\gL}{\Lambda}
\newcommand{\gs}{\sigma}
\newcommand{\NN}{\mathbb{N}}
\newcommand{\ZZ}{\mathbb{Z}}
\newcommand{\RR}{\mathbb{R}}
\newcommand{\PP}{\mathbb{P}}
\newcommand{\EE}{\mathbb{E}}
\newcommand{\mA}{\mathcal{A}}
\newcommand{\mB}{\mathcal{B}}
\newcommand{\mC}{\mathcal{C}}
\newcommand{\mE}{\mathcal{E}}
\newcommand{\mF}{\mathcal{F}}
\newcommand{\mG}{\mathcal{G}}
\newcommand{\mH}{\mathcal{H}}
\newcommand{\mI}{\mathcal{I}}
\newcommand{\mP}{\mathcal{P}}
\newcommand{\mQ}{\mathcal{Q}}
\newcommand{\mZ}{\mathcal{Z}}
\newcommand{\half}{\frac{1}{2}}
\newcommand{\tK}{\tilde{K}}
\newcommand{\tN}{\tilde{N}}
\newcommand{\td}{\tilde{\delta}}
\newcommand{\tga}{\tilde{\ga}}
\newcommand{\tps}{\tilde{\psi}}
\newcommand{\htau}{\hat{\tau}}
\newcommand{\hZ}{\hat{Z}}
\newcommand{\bP}{{\ensuremath{\mathbf P}} }
\newcommand{\bPs}{\bP_{\sigma}}
\newcommand{\bEs}{\bE_{\sigma}}
\newcommand{\bEt}{\bE_{\tau}}
\newcommand{\bPt}{\bP_{\tau} }
\newcommand{\bPst}{\bP_{\sigma,\tau} }
\newcommand{\bEst}{\bE_{\sigma,\tau} }
\newcommand{\bE}{{\ensuremath{\mathbf E}} }
\newcommand{\olm}{\overline{m}}
\newcommand{\tolm}{\tilde{\overline{m}}}
\newcommand{\tphi}{\tilde{\varphi}}
\newcommand{\ind}{\mathbf{1}}
\renewcommand{\a}{\mathrm{ann}}
\newcommand{\dd}{\mathrm{d}}
\newcommand{\Var}{\mathrm{Var}}
\newcommand{\tp}{\tilde{p}}
\renewcommand{\tilde}{\widetilde}
\newcommand{\que}{\rm que}
\newcommand{\ann}{\rm ann}
\title{Pinning of a renewal on a quenched renewal}
\author[K. S. Alexander]{Kenneth S. Alexander}
\address{Department of Mathematics, KAP 104\\
University of Southern California\\
Los Angeles, CA  90089-2532 USA}
\email{alexandr@usc.edu}
\author[Q. Berger]{Quentin Berger}
\address{LPMA, Universit\'e Pierre et Marie Curie\\
Campus Jussieu, case 188\\
4 place Jussieu, 75252 Paris Cedex 5, France}
\email{quentin.berger@upmc.fr}
\begin{document}

\begin{abstract}
We introduce the pinning model on a quenched renewal, which is an instance of a (strongly correlated) disordered pinning model. The potential takes value 1 at the renewal times of a quenched realization of a renewal process $\sigma$, and $0$ elsewhere, so nonzero potential values become sparse if the gaps in $\sigma$ have infinite mean.
The ``polymer'' -- of length $\sigma_N$ -- is given by another renewal $\tau$, whose law is modified by the Boltzmann weight $\exp(\beta\sum_{n=1}^N \mathbf{1}_{\{\sigma_n\in\tau\}})$. Our assumption is that $\tau$ and $\sigma$ have gap distributions with power-law-decay exponents $1+\alpha$ and $1+\tilde \alpha$ respectively, with $\alpha\geq 0,\tilde \alpha>0$. There is a localization phase transition: above a critical value $\beta_c$ the free energy is positive, meaning that $\tau$ is \emph{pinned} on the quenched renewal $\sigma$. We consider the question of relevance of the disorder, that is to know when $\beta_c$ differs from its annealed counterpart $\beta_c^{\rm ann}$. We show that $\beta_c=\beta_c^{\rm ann}$ whenever $ \alpha+\tilde \alpha \geq 1$, and $\beta_c=0$ if and only if the renewal $\tau\cap\sigma$ is recurrent. 
On the other hand, we show $\beta_c>\beta_c^{\rm ann}$ when $ \alpha+\frac32\, \tilde \alpha <1$. We give evidence that this should in fact be true whenever $ \alpha+\tilde \alpha<1$, providing examples for all such $ \alpha,\tilde \alpha$ of distributions of $\tau,\sigma$ for which $\beta_c>\beta_c^{\rm ann}$. We additionally consider two natural variants of the model: one in which the polymer and disorder are constrained to have equal numbers of renewals ($\sigma_N=\tau_N$), and one in which the polymer length is $\tau_N$ rather than $\sigma_N$.  In both cases we show the critical point is the same as in the original model, at least when $ \alpha>0$.
   \\[10pt]
  2010 \textit{Mathematics Subject Classification: Primary 60K35, Secondary 60K05, 60K37, 82B27, 82B44}
  \\[5pt]
  \textit{Keywords: Pinning Model, Renewal Process, Quenched Disorder, Localization Transition, Disorder Relevance.}
\end{abstract}

\maketitle

%\tableofcontents

\section{Introduction}

\subsection{Motivations}
A variety of polymer pinning models have been studied in theoretical and mathematical physics in the past decades, see \cite{denHoll,GBbook, SFLN} for reviews. We introduce in this paper a new type of disordered pinning model that we call \emph{pinning on a  quenched renewal}. Before giving its definition, we recall two well-studied related models that motivate the introduction and the study of this new model.

\subsubsection*{Pinning on an inhomogeneous defect line}
The disordered pinning model was introduced by Poland and Scheraga \cite{PolanScheg} to model DNA denaturation, and it has recently been the subject of extensive rigorous mathematical studies, cf. \cite{GBbook,SFLN}. We recall its definition.

Let $\tau=\{\tau_0=0,\tau_1,\ldots\}$ be a discrete recurrent renewal process of law $\bP$, and let $\go=(\go_n)_{n\in\NN}$ be a sequence of IID random variables of law denoted $\PP$, with finite exponential moment $M(\gl)=\EE[e^{\gl \go}]<+\infty$, for $\lambda>0$ small enough. Then, for $\gb>0$ and $h\in\RR$, and a fixed realization of $\go$ (quenched disorder), the Gibbs measure is defined by
\begin{equation} \label{eq:RN}
\frac{\dd \bP_{N,h}^{\gb,\go}}{\dd \bP} (\tau) := \frac{1}{Z_{N,h}^{\gb,\go}}\ e^{\sum_{n=1}^N (\gb \go_n+h)\ \ind_{\{n\in\tau\}}}\, ,
\end{equation}
where $Z_{N,h}^{\gb,\go}$ is the \emph{partition function}, which normalizes $\bP_{N,h}^{\gb,\go}$ to a probability measure. This Gibbs measure corresponds to giving a (possibly negative) reward $\gb \go_n +h$ to the occurrence of a renewal point at $n$.  In this context it is natural to think of the polymer configuration as the space-time trajectory of some Markov chain, with the polymer interacting with a potential whenever it returns to some particular site, for example the origin; then $\tau$ represents the times of these returns.

This system is known to undergo a localization phase transition when $h$ varies, and much attention has been given to the question of disorder relevance, that is, whether the \emph{quenched} and \emph{annealed} systems (with respective partition functions $Z_{N,h}^{\gb,\go}$ and $\EE Z_{N,h}^{\gb,\go}$) have different critical behaviors. The so-called Harris criterion \cite{Harris} is used to predict disorder relevance, and its characterization in terms of the distribution of the renewal time $\tau_1$ has now been mathematically settled completely \cite{A06,AZ08,BL15,CdH10,DGLT07,GLT09,L10,T06}, the complete necessary and sufficient condition for critical point shift being given only recently, in \cite{BL15}.

An interesting approach to this problem is based on a large deviation principle for cutting words out of a sequence of letters \cite{BGdH08}, see \cite{CdH10}: quantities of the model, such as critical points, are expressed as variational formulas involving (quenched and annealed) rate functions $I^{\que}$ and $I^{\ann}$. In \cite{CdH10}, the authors consider a version of $\tau$ truncated at a level $T$, and it is implicitly shown that a lower bound on the critical point shift is, for all $\gb>0$, 
\begin{align}
\label{eq:CdH}
\lim_{T\to\infty} \frac{1}{m_T}\lim_{N\to\infty} \frac{1}{N} \bE_{\sigma} \EE  \log \bE\bigg[ \exp\bigg( \sum_{n=1}^{\sigma_N} 
   \Big[ \big( \gb \go_n - \log M(\gb) \big)\ind_{\{n\in\tau\}}
+\frac12 \log M(2\gb) \ind_{\{n\in \tau\cap\sigma\}} \Big] \bigg) \bigg] 
\end{align}
where $m_T$ is the mean inter-arrival time of the truncated renewal, and $\sigma$ is a quenched trajectory of a renewal with the same (truncated) law $\bP$; thus $\sigma_N \sim m_TN$.
%This approach was our original motivation for introducing the pinning on a quenched renewal model.

Note that the first term of the summand on the right side of \eqref{eq:CdH} gives the quenched Hamiltonian from \eqref{eq:RN} calculated at the annealed critical point $u=-\log M(\beta)$.  The second term of the summand corresponds to a quenched system in which the disorder is sparse (at least without the truncation), in the sense that it is $0$ except at the sites of the quenched renewal $\sigma$, and these sites have a limiting density of $0$ when $\sigma_1$ has infinite mean, that is
\begin{equation} \label{sparse}
  \lim_{N\to\infty} \frac{ |\sigma \cap \{1,\dots,N\}| }{N} = 0 \quad \bPs-\text{a.s.}
\end{equation}
Since the quenched system is not pinned at the annealed critical point, the limit \eqref{eq:CdH} would be $0$, meaning no pinning, without that second term. The question is then roughly whether the additional presence of the very sparse disorder $\sigma$ is enough to create pinning  (this oversimplifies slightly, as we are ignoring the limit $T\to \infty$).  It therefore suggests the question, of independent interest, of whether such very sparse disorder creates pinning for arbitrarily small $\beta$, in the simplified context where the first term in the summand is absent and there is no truncation.  Our purpose here is to answer that question positively: under an appropriate condition on the tail exponents of the return times $\tau_1$ and $\sigma_1$, there is pinning for arbitrarily small $\beta$.

It should also be noted that the sum in \eqref{eq:CdH} is up to $\sigma_N$, not $N$ as in \eqref{eq:RN}.  To create positive free energy, $\tau$ must be able to hit an order-$N$ number of sites $\sigma_i$, which is impossible for a length-$N$ polymer with extremely sparse disorder satisfying \eqref{sparse}, see \cite{JRV05}.

\subsubsection*{Pinning on a Random Walk}
The Random Walk Pinning Model considers a random walk $X$ (whose law is denoted $\bP_X$), which is pinned to another independent (quenched) random walk $Y$ with distribution $\bP_Y$ identical to $\bP_X$, see \cite{BGdH11}. For any $\gb> 0$, the Gibbs measure is given by
\begin{equation}
\label{eq:RWPM}
\frac{\dd \bP_{N,\gb}^Y}{\dd \bP_X}(X) := \frac{1}{Z_{N,\gb}^Y} \, e^{\gb \sum_{n=1}^N \ind_{\{X_n=Y_n\}}}.
\end{equation}
This system undergoes a localization phase transition: above some critical value $\gb_c$, the random walk $X$ sticks to the quenched trajectory of $Y$, $\bP_Y$-a.s. Here $X$ and $Y$ are assumed irreducible and symmetric, with
$\log \bP_X(X_{2n}=0) \stackrel{n\to\infty}{\sim} -\rho \log n$: for example, with $\rho=d/2$ in the case of symmetric simple random walks on $\ZZ^d$.

One can compare this model to its annealed counterpart, in which the partition function is averaged over the possible trajectories of $Y$, $\bE_Y[Z_{N,\gb}^Y]$, with corresponding critical value $\gb_c^{\a}$. Non-coincidence of quenched and annealed critical points implies the existence of an intermediate phase for the long-time behavior of several systems (such as coupled branching processes, parabolic Anderson model with a single catalyst, directed polymer in random environment): we refer to \cite{BGdH11,BS08} for more details on the relations between these models and the random walk  pinning model.

The question of non-coincidence of critical points for pinning on a random walk has only been partially answered: it is known that $\gb_c>\gb_c^{\a}$ if $\rho>2$, see \cite{BGdH11}, and in the special cases of $d$-dimensional simple random walks, $\gb_c>\gb_c^{\a}$ if and only if $d\geq 3$  \cite{BT09,BS08,BS09} (note that the case $d\geq 5$ was already dealt with the case $\rho>2$). It is however believed that one has $\gb_c>\gb_c^{\a}$ whenever $\rho>1$, see Conjecture 1.8 in \cite{BGdH11}. 

\smallskip
The model we introduce now is related to this one, in the sense that we replace the random walks by renewals: we study a renewal $\tau$, which interacts with an object of the same nature, that is a quenched renewal $\sigma$.
% and we replace $\ind_{\{X_n=Y_n\}}$ by $\ind_{\{\sigma_n\in\tau\}}$ in \eqref{eq:RWPM}.

\subsection{Pinning on a quenched renewal}
We consider two recurrent renewal processes $\tau$ and $\sigma$, with (possibly different) laws denoted respectively by $\bP_{\tau}$ and $\bP_{\sigma}$.
We assume that there exist $\ga\geq 0,\tilde\ga > 0$, and slowly varying functions $\varphi(\cdot),\tilde{\varphi}(\cdot)$ (see \cite{Bingham} for a definition) such that for all $n\geq 1$
\begin{equation}
\label{eq:alphas}
\begin{split}
\bP_{\tau}(\tau_1=n) & = \varphi(n) \, n^{-(1+\ga)}\quad \text{and}\quad
 \bP_{\sigma}(\sigma_1=n) = \tilde\varphi(n)\, n^{-(1+\tilde\ga)} \, . \end{split}
\end{equation}
Let
\begin{equation}\label{ddef}
  d_\tau = \min\Supp\tau_1, \qquad d_\sigma = \min\Supp\sigma_1,
\end{equation}
where $\Supp\chi$ denotes the support of the distribution of a random variable $\chi$.
At times we will add the assumption
\begin{equation}\label{nozero}
  d_\tau \leq d_\sigma,
\end{equation}
which ensures that certain partition functions carrying the restriction $\sigma_N=\tau_N$ cannot be 0, for sufficiently large $N$.

%where $f_n\stackrel{n\to\infty}{\sim} g_n$ means that the ratio $f_n/g_n$ converges to $1$.

\smallskip
Let us write $|\tau|_n$ for $|\tau\cap \{1,\dots,n\}|$.  We consider the question of the pinning of $\tau$ by the renewal $\sigma$: the Hamiltonian, up to $N$ $\sigma$-renewals, is
$\mH_{N,\sigma}(\tau) := |\tau\cap\sigma|_{\sigma_N}$.
%where $|\tau\cap\sigma|_m$ stands for the number of points in $\tau\cap\sigma$ up to length $m$.
For ${\gb\geq 0}$ (the inverse temperature) and for a quenched trajectory of $\sigma$, we introduce the Gibbs transformation $\mP_{N,\gb}^{\sigma}$ of $\bP_{\tau}$ by
\begin{equation}
\label{defmu}
\frac{ {\rm d}\mP_{N,\gb}^{\sigma} }{{\rm d} \bP_{\tau}}:= \frac{1}{Z_{N,\gb}^{\sigma}} e^{\gb |\tau\cap\sigma|_{\sigma_N}} \ind_{\{\sigma_N\in\tau\}} \, ,
\end{equation}
where $Z_{N,\gb}^{\sigma}:=\bE_{\tau}\big[\exp(\gb |\tau\cap\sigma|_{\sigma_N}) \ind_{\{\sigma_N\in\tau\}} \big]$ is the partition function. 
%used to normalize $\mP_{N,\gb}^{\sigma}$ to a probability measure. 
%Note that, because of the analogy with \eqref{eq:CdH}, the length of the system under consideration is $\sigma_N$, and not $N$ as in the usual pinning model (cf. \cite{GBbook}).
Note that the resulting polymer is \emph{constrained}, meaning $\tau$ is required to have a renewal at the endpoint $\sigma_N$ of the polymer.

\begin{figure}[htbp]
\begin{center}
\setlength{\unitlength}{0.4cm}
\begin{picture}(30,4.5)(0,0)
\put(0,0){\includegraphics[width=12cm]{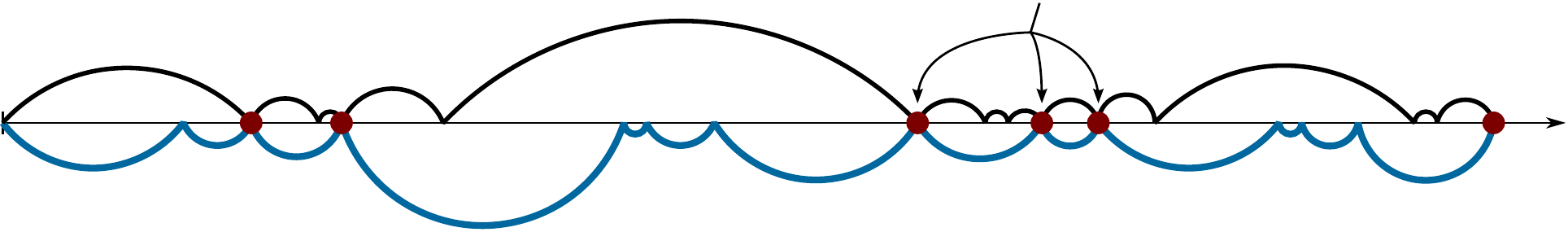} }
\put(-.5,0.8){$0$}
\put(28,3){$\tau$}
\put(28,0.4){$\sigma$}
\put(19,4.6){\footnotesize contacts}
\end{picture}
    \caption{\label{fig:model} \footnotesize The renewal $\sigma$ is quenched (here, we have $N=13$ $\sigma$-renewals), and the renewal $\tau$ collects a reward $\gb$ for each contact with a $\sigma$-renewal (here, $|\tau\cap\sigma|_{\sigma_N} = 6$).}
  \end{center}
  \vskip-.2in
\end{figure}

\begin{proposition}\label{defF}
 The quenched free energy, defined by
 \begin{equation}
 \mF(\gb):= \lim_{N\to\infty} \frac1N \log Z_{N,\gb}^{\sigma} = \lim_{N\to\infty} \frac1N \bE_{\sigma}\log Z_{N,\gb}^{\sigma} 
\end{equation}
exists and is constant $\bP_{\sigma}$-a.s.\ $\mF$ is non-negative, non-decreasing and convex. There exists $\gb_c =\gb_c(\bP_{\sigma})\geq 0$ such that  $\mF(\gb)=0$ if $\gb\leq \gb_c$ and $\mF(\gb)>0$ if $\gb> \gb_c$.
\end{proposition}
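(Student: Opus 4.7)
The strategy follows the classical recipe for quenched pinning free energies: super-additivity of $\log Z_{N,\gb}^\sigma$ along the quenched $\sigma$-sequence combined with Kingman's super-additive ergodic theorem. The existence of $\gb_c$ and the monotonicity/convexity statements are then essentially book-keeping.

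\textit{Super-additivity.} Fix $\gb\geq 0$ and $N,M\geq 1$, and write $\theta^N\sigma$ for the renewal with gaps $(\sigma_{N+k}-\sigma_{N+k-1})_{k\geq 1}$, so $(\theta^N\sigma)_k=\sigma_{N+k}-\sigma_N$. Since in the quenched setting $\sigma_N$ is a deterministic positive integer, restricting the expectation defining $Z_{N+M,\gb}^\sigma$ to $\{\sigma_N\in\tau\}$ and using the renewal property of $\tau$ at $\sigma_N$ gives, after splitting $|\tau\cap\sigma|_{\sigma_{N+M}}$ into its contributions before and after $\sigma_N$,
\[
  Z_{N+M,\gb}^\sigma \ \geq \ Z_{N,\gb}^\sigma \cdot Z_{M,\gb}^{\theta^N\sigma},
\]
hence super-additivity of $\log Z_{N,\gb}^\sigma$.

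\textit{Kingman's theorem.} The gaps of $\sigma$ are i.i.d., so the shift $\theta$ on the gap sequence is measure-preserving and ergodic under $\bP_\sigma$. Integrability of $\log Z_{1,\gb}^\sigma=\gb+\log \bP_\tau(\sigma_1\in\tau)$ follows from $\bP_\tau(\sigma_1\in\tau)\geq \bP_\tau(\tau_1=\sigma_1)$, which by \eqref{eq:alphas} is bounded below by a negative power of $\sigma_1$, together with $\bE_\sigma[\log\sigma_1]<\infty$ (automatic from $\tilde\ga>0$). Kingman's super-additive ergodic theorem then yields
\[
  \frac1N \log Z_{N,\gb}^\sigma \ \xrightarrow[N\to\infty]{} \ \mF(\gb) \,:=\, \sup_{N\geq 1}\frac1N\,\bE_\sigma\log Z_{N,\gb}^\sigma
\]
both $\bP_\sigma$-a.s.\ and in $L^1$, with $\mF(\gb)$ a deterministic constant by ergodicity of $\theta$; in particular the two limits in the statement coincide.

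\textit{Remaining properties.} For non-negativity, the bound $Z_{N,\gb}^\sigma \geq \bP_\tau(\sigma_N\in\tau)$, together with the subexponential a.s.\ growth of $\sigma_N$ (under \eqref{eq:alphas}) and the at-most-polynomial decay of $\bP_\tau(n\in\tau)$ (since $\tau$ is recurrent with power-law tail), yields $\liminf_N \tfrac1N\log Z_{N,\gb}^\sigma \geq 0$ a.s., hence $\mF(\gb)\geq 0$. Monotonicity in $\gb$ is immediate from $|\tau\cap\sigma|_{\sigma_N}\geq 0$; convexity of $\gb\mapsto\log Z_{N,\gb}^\sigma$ is H\"older, and passes to the pointwise limit. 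Setting $\gb_c := \inf\{\gb\geq 0 : \mF(\gb)>0\}$ then gives the stated phase-transition dichotomy by monotonicity.

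\textit{Main obstacle.} The only genuinely model-specific input is choosing the right direction of super-additivity: because the polymer has length $\sigma_N$ rather than $N$, concatenating $\sigma$ with $\theta^N\sigma$ glues two polymers precisely at the deterministic site $\sigma_N$, where the renewal property of $\tau$ delivers the factorization. Once this is set up, the rest is a routine application of Kingman's theorem together with standard monotonicity and convexity arguments.
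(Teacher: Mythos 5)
Your proposal is correct and follows essentially the same route as the paper: super-additivity of $\log Z_{N,\gb}^{\sigma}$ along the shift on $\sigma$-increments (via the renewal property at $\sigma_N\in\tau$), then Kingman's theorem for existence and non-randomness, with the remaining properties handled by standard monotonicity, H\"older, and subexponential-growth arguments. You are in fact slightly more explicit than the paper on two points -- the integrability of $\log Z_{1,\gb}^\sigma$ needed for Kingman, and the derivation of $\mF(\gb)\geq 0$ from $Z_{N,\gb}^\sigma\geq\bPt(\sigma_N\in\tau)$ together with the polynomial lower bound on $\bPt(n\in\tau)$ -- but the underlying argument is the same.
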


\begin{proof}
We have
$$Z_{N+M,\gb}^{\sigma} \geq \bE_{\tau}\big[\exp(\gb |\tau\cap\sigma|_{\sigma_N}) \ind_{\{\sigma_{N+M}\in\tau\}}\ind_{\{\sigma_N\in\tau\}} \big] = Z_{N,\gb}^{\sigma} Z_{M,\gb}^{\theta^{N} \sigma},$$
where $\theta$ is the ``shift operator applied to increments'':
\[
  (\theta\sigma)_i = \sigma_{i+1} - \sigma_1.
\]
Therefore the sequence $(\log Z_{N,\gb}^{\sigma})_{n\in\NN}$ is superadditive, and using Kingman's subadditive  Theorem \cite{King}, one gets the $\bP_{\sigma}$-a.s.\ existence of the limit in \eqref{defF}, and moreover
\begin{equation}
\label{subadd}
 \mF(\gb)=\mF(\gb,\bP_{\sigma})= \sup_{n\in \NN} \frac1N \bE_{\sigma}\log Z_{N,\gb}^{\sigma}.
\end{equation}
The non-negativity trivially follows from the fact that $\gb\geq 0$, and the convexity is classical and comes from a straightforward computation.
\end{proof}

%One interprets \eqref{defmu} as giving a reward $\gb$ each time the renewal $\tau$ encounters the quenched trajectory of $\sigma$. The free energy of the system reflects the net energy gain per renewal point of $\sigma$, relative to the entropy cost for $\tau$ to have renewals at these points. Moreover, a 
A standard computation gives that, when the derivative of $\mF(\gb)$ exists, 
\begin{equation}
\frac{\partial}{\partial \gb} \mF(\gb) = \lim_{N\to\infty} \mP_{N,\gb}^{\sigma} \bigg[  \frac{1}{N}\ |\tau\cap\sigma|_{\sigma_N} \bigg],
\end{equation}
where $\mP_{N,\gb}^{\sigma}$ also denotes the expectation with respect to the measure defined in \eqref{defmu}.
Therefore, when $\mF(\gb)>0$, a positive proportion of $\sigma$-renewal points are visited by $\tau$, and $\gb_c$ marks the transition between a delocalized and localized phase.

\begin{remark}\rm
\label{rem:constrainedvsfree}
We have chosen to work with the constrained version of the model, by including $\ind_{\{\sigma_N\in\tau \}}$ in the partition function.
We stress that the free energy is the same as for the corresponding ``free'' partition function,
\begin{equation}
Z_{N,\gb}^{\sigma,{\rm free}}:= \bE_{\tau} \left[  e^{\gb \sum_{n=1}^N \ind_{\{\sigma_n\in\tau\}}}\right].
\end{equation}
Indeed, let $K$ be such that $\bP_\tau(\tau_1=k)>0$ for all $k\geq K$.  Consider a trajectory $\tau$ for which the last renewal in $[0,\sigma_N-K]$ is at some point $\sigma_N-k$.  We can map $\tau$ to a constrained trajectory $\hat{\tau}$ by removing all renewals of $\tau$ in $(\sigma_N-K,\sigma_N)$ and adding a renewal at $\sigma_N$.  This is a $2^K$-to-1 map, and the Boltzmann weight of $\hat{\tau}$ is smaller than that of $\tau$ by a factor no smaller than $e^{-\beta K} \bP_\tau(\tau_1=k) \geq e^{-\beta K} \min_{K\leq j\leq\sigma_N}\bP_\tau(\tau_1=j) \geq c\sigma_N^{-(2+\ga)}$ for some constant $c>0$.  Hence
there exists $C>0$ such that for all large $N$,
\begin{equation}
\label{eq:freevsconstrained}
Z_{N,\gb}^{\sigma}\leq Z_{N,\gb}^{\sigma,{\rm free}} \leq C (\sigma_N)^{2+\ga} Z_{N,\gb}^{\sigma}.
\end{equation}
%Moreover, since $\tilde\ga>0$, one gets that $\bP_{\sigma}(\log \sigma_N \geq N^{1/2})\leq e^{-(2+\ga)N^{1/2}}$ for $N$ large, which is summable. Therefore, from Borel-Cantelli Lemma, one gets 
It is straightforward from \eqref{eq:alphas} (and because we impose $\tga>0$) that $\lim_{N\to\infty} N^{-1} \log \sigma_N =0$ $\bP_{\sigma}$-a.s., which with \eqref{eq:freevsconstrained} gives $\lim_{N\to\infty} N^{-1} \log Z_{N,\gb}^{\sigma, {\rm free}} = \mF(\gb)$, $\bP_{\sigma}$-a.s.
\end{remark}

\subsection{The annealed model}
\label{annealedsec}
We will compare the quenched-renewal pinning model to its annealed counterpart, with partition function $\bE_{\sigma} Z_{N,\gb}^{\sigma}  = \bE_{\tau,\sigma}\big[ e^{\gb |\tau\cap \sigma|_{\sigma_N}} \ind_{\{\sigma_N\in\tau\}}\big]$.  
The annealed free energy is defined by
\begin{equation}
\mF^{\a}(\gb):= \lim_{N\to\infty}\frac1N \log \bE_{\sigma} Z_{N,\gb}^{\sigma}\, .
\end{equation}
The existence of the annealed free energy is straightforward, using the superadditivity of $\log \bE_{\sigma} Z_{N,\gb}^{\sigma}$. 

Note that this model does not treat $\tau$ and $\sigma$ symmetrically. Closely related is what we will call the \emph{homogeneous doubled pinning model}, in which the length of the polymer is fixed, rather than the number of renewals in $\sigma$: the partition function and free energy are 
\[
  Z_{n,\gb}^{\rm hom}:=\bE_{\tau,\sigma}\big[ e^{\gb |\tau\cap \sigma|_n} \ind_{\{n\in\tau\cap\sigma\}}\big], \qquad \mF^{\hom}(\gb) = \lim_{n\to\infty} \frac1n \log Z_{n,\gb}^{\rm hom}\, ,
  \]
and the corresponding critical point is denoted $\beta_c^{\rm hom}$. This model is exactly solvable, see \cite[Ch. 2]{GBbook}, and in particular, its critical point is $\beta_c^{\rm hom}=-\log \bP_{\tau,\sigma}\big( (\tau\cap\sigma)_1<\infty) = \log\big( 1+\bE_{\sigma,\tau}[|\tau\cap\sigma|]^{-1}\big)$, with the convention that $\frac{1}{\infty}=0$.

\begin{proposition}
\label{prop:annealed}
The annealed system is solvable, in the sense that for $\gb>0$, $\mF^{\a}(\gb)$ is the solution $\mathtt{F}$ of
\begin{equation}
\label{eq:fann}
\sum_{n\ge 1} e^{- \mathtt{F} n} \bPst (\sigma_n \in \tau) = \frac{1}{e^{\gb} -1}
\end{equation}
if a solution exists, and $0$ otherwise. There exists $\gb_c^{\a} \ge 0$ such that $\mF^{\a} (\gb)=0$ if $\gb\leq \gb_c^{\a} $ and $\mF^{\a} (\gb)>0$ if $\gb> \gb_c^{\a} $. In fact, we have $\gb_c^{\a} =\gb_c^{\rm hom}$, satisfying $e^{\gb_c^{\a}}-1 = \big( \sum_{n\ge 1} \bPst (\sigma_n \in \tau)\big)^{-1}$, and thus $\gb_c^{\a}=0$ if and only if $\tau\cap \sigma$ is recurrent.

Additionally, under \eqref{eq:alphas} with $\tga>0$, there exists a slowly varying function $L(\cdot)$ such that
\begin{equation}
\label{fanncritic}
\mF^{\a} \big( \gb\big) \stackrel{ \gb \downarrow \gb_c^{\a}}{\sim} L\left( \frac{1}{\gb-\gb_c^{\a}} \right)\, \big(\gb-\gb_c^{\a} \big)^{ 1\vee \frac{1}{|\ga^*|} }, \text{  with } \ga^* := \frac{1- \ga\wedge 1 - \tilde\ga\wedge 1}{ \tilde\ga \wedge 1}\in [-1,+\infty) . 
\end{equation}
In the case $\ga^* =0$, one interprets \eqref{fanncritic} as saying $\mF(\gb_c^\a+u )$ vanishes faster than any power of $u$.
\end{proposition}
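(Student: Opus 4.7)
The strategy is to reinterpret the annealed model as a standard homogeneous pinning model on an auxiliary renewal process, and then apply classical pinning techniques. Under $\bPst$, at any time when $\sigma$ and $\tau$ simultaneously renew, the strong Markov property gives an independent restart, so the set of contact indices $\{n\ge 1:\sigma_n\in\tau\}$ is itself an i.i.d.\ renewal process (in $\sigma$-index units), with gap law
\[
  K(n) := \bPst\bigl(\sigma_n\in\tau,\ \sigma_j\notin\tau\text{ for }1\le j<n\bigr)
\]
and Green's function $u_n:=\bPst(\sigma_n\in\tau)$; these are linked by $\hat K=\hat u/(1+\hat u)$, where $\hat K(x)=\sum_{n\ge 1}K(n)x^n$ and $\hat u(x)=\sum_{n\ge 1}u_n x^n$.

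I would first expand the constrained annealed partition function by summing over contact positions $0=n_0<n_1<\cdots<n_m=N$:
\[
  \bEs Z_{N,\gb}^\sigma \ =\ \sum_{m\ge 1} e^{\gb m} \sum_{0=n_0<n_1<\cdots<n_m=N}\prod_{i=1}^m K(n_i-n_{i-1}).
\]
This is exactly a (possibly defective) homogeneous pinning partition function with reward $\gb$ and interarrival law $K$, so classical pinning theory (see, e.g., \cite[Ch.~2]{GBbook}) yields existence and convexity of $\gb\mapsto\mF^{\a}(\gb)$ and tells us that $\mF^{\a}(\gb)$ equals the unique positive solution $\mathtt{F}$ of $e^\gb\hat K(e^{-\mathtt{F}})=1$ when one exists, and $0$ otherwise. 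Substituting $\hat K=\hat u/(1+\hat u)$ turns this into \eqref{eq:fann}. Letting $\mathtt{F}\downarrow 0$ then characterizes $\gb_c^{\a}$ by $e^{\gb_c^{\a}}-1=(\sum_n u_n)^{-1}=\bEst[|\tau\cap\sigma|]^{-1}$, which matches the formula for $\gb_c^{\hom}$ stated just before the proposition and gives $\gb_c^{\a}=0$ iff $\tau\cap\sigma$ is recurrent.

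For the critical-exponent statement \eqref{fanncritic}, the next step is to pin down the polynomial tail of $u_n$ under \eqref{eq:alphas}. Conditioning on $\sigma_n$ and combining the (stable/Gaussian/LLN) scaling of $\sigma_n$, at rate $n^{1/(\tilde\ga\wedge 1)}$, with Doney's local renewal asymptotic $\bPt(k\in\tau)\asymp\varphi(k)^{-1}\,k^{-(1-\ga\wedge 1)}$ (a positive constant when $\ga>1$) yields
\[
  u_n \ \sim\ L(n)\,n^{-(1+\alpha^*)},\qquad 1+\alpha^*=\frac{1-\ga\wedge 1}{\tilde\ga\wedge 1},
\]
for an explicit slowly varying $L$ built from $\varphi$ and $\tilde\varphi$; the exponent arises by chaining Doney's ``$1-\ga\wedge 1$'' with the spatial scale ``$1/(\tilde\ga\wedge 1)$'' of $\sigma_n$.

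Finally, a Karamata/Tauberian argument applied to $\hat K=\hat u/(1+\hat u)$ transfers this into $K(n)\asymp L'(n)\,n^{-(1+|\alpha^*|)}$ (treating $\alpha^*>0$, $\alpha^*<0$, and $\alpha^*=0$ separately). Plugging this tail of $K$ into the classical near-critical analysis of $e^\gb\hat K(e^{-\mathtt{F}})=1$ as $\gb\downarrow\gb_c^{\a}$ produces \eqref{fanncritic}: when $|\alpha^*|\ge 1$, $K$ has finite mean and $\mF^{\a}$ vanishes linearly, while for $|\alpha^*|\in(0,1)$ one obtains the power $1/|\alpha^*|$ up to slow variation. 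I expect the main obstacle to be the bookkeeping in the boundary cases $\ga\in\{0,1\}$ and $\tilde\ga=1$, where Doney's asymptotic degenerates and logarithmic/slowly-varying corrections appear, and especially the borderline $\alpha^*=0$, where $u_n$ decays only slowly and $\mF^{\a}$ vanishes faster than any polynomial as noted after \eqref{fanncritic}; these are handled by routine but case-by-case Karamata computations.
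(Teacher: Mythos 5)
Your proposal is correct, and it arrives at the same critical equation \eqref{eq:fann} and the same conclusions, but via a genuinely different expansion from the paper's. You use the classical \emph{renewal decomposition}: identify the set $\{n:\sigma_n\in\tau\}$ as a renewal process under $\bPst$ with first-return law $K(n)$, write $\bEs Z_{N,\gb}^\sigma=\sum_m e^{\gb m}\sum_{n_1<\cdots<n_m=N}\prod_i K(n_i-n_{i-1})$, invoke the standard homogeneous-pinning implicit equation $e^\gb\hat K(e^{-\mathtt F})=1$, and then convert to the Green's-function form via $\hat K=\hat u/(1+\hat u)$. The paper instead performs a Mayer expansion $e^{\gb}=1+(e^{\gb}-1)$, which yields directly
\[
\bEs Z_{N,\gb}^\sigma=\sum_{m}(e^{\gb}-1)^m\sum_{1\le i_1<\cdots<i_m=N}\prod_{k}\bPst(\sigma_{i_k-i_{k-1}}\in\tau)=e^{\mathtt F N}\,\bP^*(N\in\nu^*),
\]
where $\nu^*$ has inter-arrival law $K^*_\gb(n)=(e^{\gb}-1)e^{-\mathtt F n}\bPst(\sigma_n\in\tau)$. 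The two expansions are dual to each other (sum over marked subsets of contacts versus sum over the exact contact set), and the conversion $\hat K=\hat u/(1+\hat u)$ you invoke is precisely what identifies them. What the paper's route buys is that it works natively with the Green's function $u_n=\bPst(\sigma_n\in\tau)$, whose regular-variation is established separately in Lemma~\ref{lem:Kstar}; one never needs to invert to obtain the tail of $K$. Your route is closer to the textbook presentation of homogeneous pinning, but carries one extra Tauberian step in the recurrent case, where $K(n)$ must be deduced from $u_n$ via the Karamata/Doney inversion before the near-critical analysis can be run. Your observation that in the transient case $K(n)\sim c\,u_n$ (so the exponent $1+|\alpha^*|$ is immediate) is correct and matches the paper's case (iii). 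All of your stated obstacles (boundary exponents $\ga\in\{0,1\}$, $\tga=1$, and the $\alpha^*=0$ marginal case) are indeed the technically delicate points, and they are handled in the paper by the case analysis in the proof of Lemma~\ref{lem:Kstar} together with \cite[Thm.~8.7.3]{Bingham}; your plan to handle them by routine Karamata computations is sound.
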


The proof of this proposition uses standard techniques, and is postponed to Appendix~\ref{app:K*}. It relies on a rewriting of the partition function, together with an estimate on the probability $\bPst(\sigma_n\in\tau)$ that we now present (its proof is also postponed to Appendix \ref{app:K*}).
\begin{lemma}
\label{lem:Kstar}
Under \eqref{eq:alphas}, with $\tga>0$, there exists a slowly varying function $\gp^*(\cdot)$ such that, for $\ga^*$ from \eqref{fanncritic},
\begin{equation}
\bPst(\sigma_n\in\tau)  = \gp^*(n) \, n^{-(1+\ga^*)} \, .
\end{equation}
\end{lemma}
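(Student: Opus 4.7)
The plan is to factor $\bPst(\sigma_n \in \tau) = \bE_\sigma[u(\sigma_n)]$, where $u(k) := \bP_\tau(k \in \tau)$, and then to combine classical asymptotics for the $\tau$-renewal function $u$ with the fluctuation theory of $\sigma_n$.

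For the first ingredient, standard renewal theory gives $u$ regularly varying: $u(k) = \hat\varphi(k)\,k^{-(1-\ga\wedge 1)}$ for some slowly varying $\hat\varphi$. When $\ga > 1$ this is the ordinary renewal theorem ($u(k) \to 1/\bE_\tau[\tau_1]$); when $\ga \in (0,1)$ it is Doney's theorem, yielding $u(k) \sim \frac{\ga\sin(\pi\ga)}{\pi\varphi(k)}\,k^{-(1-\ga)}$; the boundary values $\ga \in \{0,1\}$ are covered by Erickson's extension of the renewal theorem to slowly-varying tails. For the second ingredient, $\sigma_n$ lies in the domain of attraction of a positive $(\tga\wedge 1)$-stable law. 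In the finite-mean case $\tga>1$, the LLN gives $\sigma_n/n \to \mu_\sigma$ and the slow variation of $\hat\varphi$ immediately gives $\bE_\sigma[u(\sigma_n)] \sim \hat\varphi(n\mu_\sigma)\,(n\mu_\sigma)^{-(1-\ga\wedge 1)}$. Otherwise, there is a regularly varying sequence $a_n$ of index $1/\tga$ such that $\sigma_n/a_n \Rightarrow Z$ with $Z>0$ a $\tga$-stable variable (or the deterministic limit $\mu_\sigma$ in the finite-mean $\tga=1$ case).

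Writing
\[ u(\sigma_n) \sim \hat\varphi(a_n)\,a_n^{-(1-\ga\wedge 1)}\,(\sigma_n/a_n)^{-(1-\ga\wedge 1)} \]
and taking expectations, uniform integrability delivers
\[ \bPst(\sigma_n \in \tau) \sim \hat\varphi(a_n)\,a_n^{-(1-\ga\wedge 1)}\,\bE\bigl[Z^{-(1-\ga\wedge 1)}\bigr]. \]
Since $a_n$ is regularly varying of index $1/\tga$, the right side has exactly the claimed form $\gp^*(n)\,n^{-(1+\ga^*)}$ with $1+\ga^* = (1-\ga\wedge 1)/(\tga\wedge 1)$, and the degenerate case $\ga\wedge 1 = 1$ collapses to the classical limit $u(k)\to 1/\bE_\tau[\tau_1]$, consistent with $1+\ga^*=0$.

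The main technical obstacle is the uniform integrability of $(\sigma_n/a_n)^{-(1-\ga\wedge 1)}$ when $\tga<1$: the trivial lower bound $\sigma_n\geq n$ is not enough since $n/a_n\to 0$. We would address this by controlling $\bP_\sigma(\sigma_n \leq \delta a_n)$ uniformly in large $n$ and small $\delta>0$, either through known small-ball estimates for sums in the domain of attraction of a positive stable law, or by a direct exponential-tilting/truncation argument exploiting that each $\sigma_i \geq 1$. The classical finiteness of all negative moments of positive stable variables of index $\tga<1$ then ensures $\bE[Z^{-(1-\ga\wedge 1)}]<\infty$ and closes the argument.
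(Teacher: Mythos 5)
Your plan is essentially the paper's own proof: the appendix proof computes $\sum_k \bPs(\sigma_n=k)\bPt(k\in\tau)$ by splitting according to whether $\sigma_n$ is of order $b_n$ (local limit theorem gives the leading constant), much smaller (controlled by the small-ball estimates of Lemmas \ref{smallk} and \ref{smallk2}, proved exactly by the exponential-tilting argument you name), or much larger (controlled by the decay of $\bPt(k\in\tau)$). The only caveat worth flagging is that the pointwise equivalence $u(\sigma_n)\sim\hat\varphi(a_n)a_n^{-(1-\ga\wedge 1)}(\sigma_n/a_n)^{-(1-\ga\wedge 1)}$ requires Potter's bounds to absorb $\hat\varphi(\sigma_n)/\hat\varphi(a_n)$ into the uniform-integrability question, which your sketch elides but which the paper's explicit three-range decomposition handles directly.
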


\smallskip
From  Jensen's inequality we have $\mF(\gb)\leq \mF^{\a}(\gb)$,
so that $\gb_c\geq \gb_c^{\a}$.
When $\tau\cap\sigma$ is recurrent, $\gb_c^{\a}=0$, and our main theorem will say that $\gb_c=0$ as well. In the transient case we have $\gb_c^{\a}>0$ and we can ask whether $\gb_c= \gb_c^{\a}$ or not.

\subsection{On the recurrence/transience of $\tau\cap\sigma$}
\label{sec:recur}

The criterion for the recurrence of the renewal $\tau\cap \sigma$ is that
\begin{equation} \label{recur}
  \bE_{\sigma}\bE_{\tau}[|\tau\cap\sigma|] = \sum_{n\in\NN} \bP_{\sigma}(n\in\sigma) \bP_{\tau}(n\in\tau) = +\infty .
\end{equation}
Under the assumption \eqref{eq:alphas}, the exact asymptotic behavior of $\bPt(n\in\tau)$ is known. We write $a\wedge b$ for $\min(a,b)$.
Defining the truncated mean and tail
\[
\olm(n) = \bE_\tau\big( \tau_1 \wedge n \big), \quad r_n := \bPt(\tau_1>n)\, ,
\]
we have
\begin{equation}
\label{Doney}
\bPt(n\in\tau)  \stackrel{n\to\infty}{ \sim} 
\begin{cases}
 r_n^{-2}n^{-1}\gp(n) &\text{if } \ga=0\, ,\\
 \frac{\ga \sin(\pi \ga)}{ \pi} n^{-(1-\ga)} \gp(n)^{-1} &\text{ if } 0<\ga<1  \, ,\\
 \olm(n)^{-1} &\text{ if } \ga=1, \bEt[\tau_1]=+\infty \, ,\\
\bEt[\tau_1]^{-1} &\text{ if } \bEt[\tau_1]<+\infty\, .
\end{cases}
\end{equation}
The first is from \cite{Nagaev}, the second is due to Doney \cite[Thm. B]{Doney}, the third is due to Erickson \cite[Eq. (2.4)]{Eric}, and the fourth is the classical Renewal Theorem. 

Applying \eqref{Doney} to $\tau\cap\sigma$ in \eqref{recur}, we see that $\ga+\tilde\ga>1$ implies that $\tau\cap \sigma$ is recurrent, and also that $\ga+\tilde\ga<1$ implies that $\tau\cap\sigma$ is transient. The case $\ga+\tilde\ga=1$ is marginal, depending on the slowly varying functions $\varphi(\cdot)$ and $\tilde\varphi(\cdot)$: if also $\alpha,\tilde\alpha>0$ then $\tau\cap\sigma$ is recurrent if and only if  
\begin{equation}
\label{recur2}
 \sum_{n\in\NN} \frac{1}{n\varphi(n)\tilde\varphi(n)} =+\infty.
\end{equation}
 For the case $\alpha=0,\tilde\alpha=1$, we have recurrence if and only if 
\begin{equation}
\label{recur3}
   \sum_{n\in\NN} \frac{\varphi(n)}{n\, \tilde{\overline{m}}(n)\bPt(\tau_1\geq n)^2} = +\infty,
\end{equation}
 where $\tilde{ \overline{m}}(n) = \bEs(\sigma_1 \wedge n)$, which is slowly varying since $\tilde\alpha=1$.

\smallskip
Since $\bEst[|\tau\cap\sigma|] =\sum_{n\ge1} \bPst(\sigma_n\in\tau)$, for $\ga^*$ from Lemma \ref{lem:Kstar} we have that $\tau\cap \sigma$ is recurrent if $\ga^*<0$ and transient if $\ga^*>0$. In the case $\ga^*=0$, either can hold depending on the slowly varying function $\gp^*$.  

\section{Main results}

\subsection{Results and comments}
Our main result says that in the case where $\tau\cap\sigma$ is recurrent, or transient with $\ga+\tga=1$, the quenched and annealed critical points are equal (so both are $0$ in the recurrent case). 
When $\ga+\tga <1$, a sufficient condition for unequal critical points involves the exponent $\ga^*$ appearing in Lemma \ref{lem:Kstar}.
%\[
%  \alpha^*:= \frac{1-\ga-\tilde\ga}{\tilde\ga},
%  \]
%which arises naturally in that $\bPst(\sigma_n\in\tau)\stackrel{n\to\infty}{\sim} \gp^*(n) n^{-(1+\ga^*)}$ for some slowly varying $\gp^*$ (see Lemma \ref{lem:Kstar}).

\begin{theorem}
\label{thm:main}
Assume \eqref{eq:alphas}.
\begin{gather}
\text{If } \tilde\ga>0 \text{ and } \ga+\tilde\ga\geq 1\ (i.e.\ \ga^* \le 0), \quad\text{ then } \gb_c=\gb_c^{\a} ;\label{eq:main1}\\
\text{If } \tilde\ga>0, \, \ga+\tga<1 \ (i.e.\ \ga^* > 0), \text{ and } \ga^*>\half, \quad\text{ then } \gb_c>\gb_c^{\a} \label{eq:main2}.
\end{gather}
In the case $\tga>0,\ga+\tilde\ga\geq 1$, in addition to having equal critical points, the quenched and annealed systems have the same critical behavior:
\begin{equation}
\label{eq:main3}
\lim_{\gb\downarrow \gb_c } \frac{\log \mF(\gb) }{ \log(\gb-\gb_c)} = \frac{1}{|\ga^*|}, \qquad \text{if } \tga>0, \tga+\ga\ge 1 \ (i.e.\ \ga^* \le 0)\, .
\end{equation}
\end{theorem}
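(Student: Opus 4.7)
Jensen's inequality already gives $\mF(\gb)\le\mF^{\a}(\gb)$, hence $\gb_c\ge\gb_c^{\a}$, so the task splits into: matching lower bounds on $\mF$ for \eqref{eq:main1} and \eqref{eq:main3}, and an actual upper bound $\mF(\gb)=0$ at some $\gb>\gb_c^{\a}$ for \eqref{eq:main2}. I would obtain the former via a second moment method on the normalized partition function, and the latter via a fractional moment bound combined with a change of measure on the quenched renewal $\sigma$, in the spirit of \cite{GLT09,DGLT07,T06}, but adapted to the correlated (renewal) nature of the disorder.

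\textbf{Second moment for parts \eqref{eq:main1} and \eqref{eq:main3}.} Set $\bar Z_N := Z^{\sigma}_{N,\gb}/\bEs Z^{\sigma}_{N,\gb}$ and expand the square by introducing two i.i.d.\ copies $\tau,\tau'$ of the polymer:
\[
  \bEs[\bar Z_N^{\,2}]=\frac{\bE_{\sigma,\tau,\tau'}\bigl[e^{\gb(|\tau\cap\sigma|_{\sigma_N}+|\tau'\cap\sigma|_{\sigma_N})}\ind_{\{\sigma_N\in\tau\cap\tau'\}}\bigr]}{\bigl(\bE_{\sigma,\tau}\bigl[e^{\gb|\tau\cap\sigma|_{\sigma_N}}\ind_{\{\sigma_N\in\tau\}}\bigr]\bigr)^{2}}.
\]
Both numerator and denominator admit a homogeneous-pinning renewal representation (as in the proof of Proposition~\ref{prop:annealed}) on, respectively, the renewals $\tau\cap\sigma$ and $\tau\cap\tau'\cap\sigma$. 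Lemma~\ref{lem:Kstar} gives the first a tail exponent $\ga^*$; applied once more (using that $\tau\cap\tau'$ again has a power-law tail), it gives the second a tail exponent $2\ga^*$ up to slowly varying factors. When $\ga^*\le 0$ this latter renewal is recurrent, the associated homogeneous pinning model is localized for every $\gb>0$, and a direct computation parallel to \eqref{eq:fann} yields $\bEs[\bar Z_N^{\,2}]\le C(\gb)$ uniformly in $N$ for $\gb$ slightly above $\gb_c^{\a}$. Paley--Zygmund then gives a $\bPs$-event of positive probability on which $Z^\sigma_{N,\gb}\ge(2C(\gb))^{-1}\bEs Z^\sigma_{N,\gb}$, and by the a.s.\ constancy of $\mF$ from Proposition~\ref{defF}, $\mF(\gb)\ge\mF^{\a}(\gb)$, proving \eqref{eq:main1}. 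Running the same argument at $\gb=\gb_c^{\a}+u$ and tracking the $u$-dependence of the renewal sums through Lemma~\ref{lem:Kstar} yields the matching lower bound $\mF(\gb_c^{\a}+u)\ge c\,\mF^{\a}(\gb_c^{\a}+u)$, whence \eqref{eq:main3} follows via \eqref{fanncritic}.

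\textbf{Fractional moment + change of measure for part \eqref{eq:main2}.} Pick $\gamma\in(0,1)$ with $\gamma(1+\ga^*)>1$ and a coarse-graining scale $\ell=\ell(\gb)\to\infty$ as $\gb\downarrow\gb_c^{\a}$. Using $(\sum a_i)^\gamma\le\sum a_i^\gamma$, decompose $\bEs[(Z^\sigma_{N,\gb})^\gamma]$ into a sum over sequences of $\sigma$-blocks of $\ell$ consecutive renewals visited by $\tau$. Per block, apply H\"older against a tilted law $\tilde\bP_\sigma$, so that each block contributes $(\tilde\bE_\sigma[Z^\sigma_{\mathrm{block}}])^\gamma(\bEs[(d\bPs/d\tilde\bP_\sigma)^{\gamma/(1-\gamma)}])^{1-\gamma}$. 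For $\tilde\bP_\sigma$ I would use a local density tilt on the $\sigma$-gap distribution within the block, calibrated to lower the expected number of short (``hittable'') gaps---the renewal analogue of the Gaussian tilts of \cite{DGLT07,GLT09}. The relative entropy should scale as $\ell^{1-2\ga^*}$, which stays bounded precisely when $\ga^*>1/2$; meanwhile the tilt can be chosen so that each block expectation contracts by a factor $<1/2$ for $\ell\ge\ell_0(\gb-\gb_c^{\a})$. Iterating across $N/\ell$ blocks gives $\bEs[(Z^\sigma_{N,\gb})^\gamma]\le e^{-cN/\ell}$, and Jensen in the form $\gamma\,\bEs\log Z\le\log\bEs Z^\gamma$ gives $\mF(\gb)\le \gamma^{-1}\limsup_N N^{-1}\log\bEs[(Z^\sigma_{N,\gb})^\gamma]=0$.

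\textbf{Main obstacle.} The crux of the argument is constructing $\tilde\bP_\sigma$ in part \eqref{eq:main2}: because $\sigma$ is a correlated renewal and not i.i.d.\ noise, neither the Bernoulli nor the Gaussian tilts familiar from the pinning literature apply as-is. One must design a tilt on the $\sigma$-increment sequence $(\sigma_i-\sigma_{i-1})_{i\le\ell}$ that simultaneously achieves the ``Harris'' entropy scaling $\ell^{1-2\ga^*}$ and yields a quantifiable reduction of the tilted block partition function, with the slowly varying $\varphi,\tilde\varphi$ handled uniformly in $\ell$ and $\gb$. The sharp critical behavior in \eqref{eq:main3} adds a secondary challenge of analogous uniformity in the second moment bound at $\gb$ arbitrarily close to $\gb_c^{\a}$.
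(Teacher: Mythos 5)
The core issue is with your treatment of \eqref{eq:main1} and \eqref{eq:main3}, which is both logically flawed and, more importantly, not able to reach the boundary case $\ga^*=0$ that the theorem covers. You claim that when $\ga^*\le 0$, the ``doubled'' renewal is recurrent, hence the associated homogeneous pinning model is localized, hence $\bEs[\bar Z_N^2]\le C(\gb)$. This implication runs backwards: if that homogeneous model is localized, its partition function grows exponentially, so the normalized second moment would be \emph{unbounded}. A bounded second moment requires the opposite (roughly, that the replica-overlap contribution to the partition function stays sub-exponential), which in the classical pinning setting corresponds to the replica renewal being \emph{transient}. Moreover, the exponent claim is not correct: the relevant object is not $\bEs[\bPt(\sigma_n\in\tau)]^2$ but rather $\bEs[\bPt(\sigma_n\in\tau)^2]$, which by Jensen is strictly larger; the tail exponent of this quantity must be computed afresh and is not simply ``$2\ga^*$''. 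Finally, even if the logic were repaired, second-moment methods in pinning-type models are essentially always confined to the strict interior of the irrelevance region, and cannot handle the marginal case — here $\ga^*=0$, i.e.\ $\ga+\tga=1$ — which is the hard part. The paper's own finite-volume criterion (Section \ref{sec:finitevol}) already gives $\gb_c=0$ and \eqref{eq:main3} for $\ga^*<0$ by a one-line Jensen estimate, so a second-moment argument would at best reproduce that; to reach $\ga^*=0$ (in particular the transient case with $\gb_c^{\a}>0$) the paper invents the rare-stretch localization strategy of Sections \ref{sec:preliminaries}--\ref{sec:localization}: classify $\sigma$-stretches of length $\ell$ by the probability that $\tau$ collects $\tilde\gd\ell$ contacts, select the ``accepting'' class contributing dominantly to the annealed free energy, and force $\tau$ to visit all accepting stretches. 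The key inequality $(1-\ga\wedge 1)/(\tga\wedge 1)\le 1$ is used exactly once, in \eqref{freelower}, to compensate the entropic cost of the jumps between accepting stretches; no second-moment computation could isolate this mechanism. You also assert a lower bound $\mF(\gb_c^{\a}+u)\ge c\,\mF^{\a}(\gb_c^{\a}+u)$ with a \emph{constant} $c$; this is much stronger than what is needed or proved for \eqref{eq:main3}, which only asserts equality of the log-log critical exponent.

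For \eqref{eq:main2}, your plan — fractional moments, H\"older against a tilted law, coarse-graining — is indeed the same overall scheme as the paper's Section \ref{sec:proof2}, and your heuristic entropy count $\ell^{1-2\ga^*}$ correctly identifies $\ga^*>1/2$ as the threshold. But the phrase ``local density tilt on the $\sigma$-gap distribution within the block'' is where the real work lives and is not carried out; the paper's actual change of measure does not tilt the gap law directly but multiplies by the indicator of a carefully chosen event $J_L$ (defined in \eqref{def:JL}) involving the count $Y_L$ of sub-blocks whose maximal $\sigma$-gap exceeds a threshold. Establishing \eqref{eq:cond2} then requires the nontrivial Steps 1--6 of Section \ref{sec:eventJ} — a coupling argument to show approximate stochastic monotonicity of gap sizes under the conditioning $A_L(\nu)$, the notion of ``capped'' sub-blocks, a lower bound on $|\mC(\nu)|$ for typical $\nu$ and a concentration argument for $Y_L$ both under $\bPs$ and under $\bPst(\cdot\mid A_L(\nu))$ — none of which is automatic, and some of which (e.g.\ the $\tilde\gep_L$ correction in \eqref{epeffect}) are precisely the obstructions you flag but do not resolve. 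So the proposal correctly identifies the shape of the argument for \eqref{eq:main2} but leaves the decisive construction open, and is not correct as stated for \eqref{eq:main1}/\eqref{eq:main3}.
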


Note that $\ga^*>1/2$ is equivalent to $\ga+\frac32\,  \tilde\ga < 1$.  We suspect that this condition can be removed in \eqref{eq:main2}.
In that direction, our next result gives examples of distributions for $\tau,\sigma$ with any prescribed asymptotic behavior with $\ga+\tga<1$ (\textit{i.e.}\ any $\gp,\tilde\gp$ and $\ga,\tga>0$ with $\ga+\tga<1$) for which $\gb_c>\gb_c^{\a}$.
It shows that, if the equality of critical points is determined entirely by the exponents $\ga,\tilde\ga$ and the asymptotics of $\gp,\tilde\gp$, then for $\ga,\tilde\ga>0$ \eqref{eq:main2} is true without the condition $\ga^*>1/2$.

\begin{theorem}
\label{thm:exist}
For any $\alpha,\tilde\alpha >0$ with $\ga+\tga<1$ and any slowly varying functions $\gp(\cdot),\tilde\gp(\cdot)$, there exist distributions for $\tau$ and $\sigma$ satisfying  $\bPt(\tau_1=n) \stackrel{n\to\infty}{\sim} \gp(n) n^{-(1+\ga)}$ and $\bPs(\sigma_1=n) \stackrel{n\to\infty}{\sim} \tilde\gp(n) n^{-(1+\tga)}$,
 with $\gb_c > \gb_c^{\a}$. 
\end{theorem}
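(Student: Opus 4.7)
The statement is an existence result: we are free to choose the distributions of $\tau$ and $\sigma$ subject only to the prescribed tail asymptotics. My plan is to exploit this freedom by introducing a two-scale structure that pushes the effective model into the regime where the argument underlying Theorem~\ref{thm:main}\eqref{eq:main2} becomes usable.

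First, since $\alpha < 1-\tilde\alpha < 1$, one can pick an auxiliary exponent $\tilde\alpha_0 \in (0,\tilde\alpha)$ small enough that $\alpha + \tfrac{3}{2}\tilde\alpha_0 < 1$, so the hypothesis of \eqref{eq:main2} is satisfied for the pair $(\alpha,\tilde\alpha_0)$. I would then construct $\bP_\sigma$ as a mixture: with probability $1-p$, $\sigma_1$ is drawn from a ``short-gap'' distribution with tail $n^{-(1+\tilde\alpha_0)}$ and a prescribed slowly varying profile, and with probability $p$, from a ``long-gap'' distribution with tail $\tilde\varphi(n)\, n^{-(1+\tilde\alpha)}$. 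Because the heavy-tailed component dominates asymptotically, a suitable choice of $p$ and normalizations ensures that $\bP_\sigma(\sigma_1=n) \sim \tilde\varphi(n)\, n^{-(1+\tilde\alpha)}$ exactly, while the short-scale behavior of $\sigma$ remains governed by the auxiliary exponent $\tilde\alpha_0$. The distribution of $\tau$ would be taken to directly match $\varphi(n)\, n^{-(1+\alpha)}$.

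The verification that $\gb_c > \gb_c^{\a}$ for the resulting model would proceed via a fractional moment method. The renewal $\sigma$ decomposes into blocks bounded by ``long'' jumps from the $\tilde\alpha$-component, and within each block $\sigma$ resembles a renewal of exponent $\tilde\alpha_0$. Applying the fractional moment / coarse graining argument underlying Theorem~\ref{thm:main}\eqref{eq:main2} block-wise, and combining with the super-additivity of $\log Z_{N,\gb}^{\sigma}$ used in Proposition~\ref{defF}, one should obtain a uniform fractional moment bound on $Z_{N,\gb}^{\sigma}$ for $\gb$ slightly above $\gb_c^{\a}$, forcing $\mF(\gb)=0$ in this regime and hence $\gb_c > \gb_c^{\a}$.

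The main obstacle is reconciling the two scales. The annealed critical point of the constructed mixture is sensitive to the heavier tail $\tilde\alpha$ and hence matches the generic computation of Proposition~\ref{prop:annealed} (applied with the prescribed $\tilde\alpha,\tilde\varphi$), whereas the block-wise fractional moment control uses the stronger inequality $\alpha + \tfrac{3}{2}\tilde\alpha_0 < 1$. The delicate step is thus to show that a block-level fractional moment bound, obtained as in the proof of \eqref{eq:main2} for the effective exponent $\tilde\alpha_0$, correctly transfers to the full model after one accounts for the rare long gaps; this requires careful bookkeeping of the contribution of blocks containing long jumps and a matching of the slowly varying corrections between the two components of the mixture, which can be arranged by tuning $p$ and the cutoffs in the mixture.
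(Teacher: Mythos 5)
Your approach is genuinely different from the paper's, but there is a flaw in the construction itself. Since $\tilde\alpha_0<\tilde\alpha$, the component with exponent $\tilde\alpha_0$ has the \emph{heavier} tail: $n^{-(1+\tilde\alpha_0)}$ decays more slowly than $n^{-(1+\tilde\alpha)}$. In any nontrivial mixture the $\tilde\alpha_0$-component therefore dominates the tail asymptotics, so the mixture satisfies $\bPs(\sigma_1=n)\sim c\,n^{-(1+\tilde\alpha_0)}$, \emph{not} $\tilde\gp(n)\,n^{-(1+\tilde\alpha)}$ as the statement requires. The sentence ``the heavy-tailed component dominates asymptotically, a suitable choice of $p$ ... ensures that $\bP_\sigma(\sigma_1=n)\sim\tilde\gp(n)\,n^{-(1+\tilde\alpha)}$'' has the conclusion backward. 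You would need to truncate the $\tilde\alpha_0$-component at some finite scale, in which case it is no longer a renewal of exponent $\tilde\alpha_0$ and the second part of your plan no longer has anything to apply to at large scales.

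Even granting a truncated interpretation, the ``delicate step'' you flag is exactly where the argument breaks. The coarse-graining/change-of-measure machinery in Section~\ref{sec:proof2} takes the block length $L\to\infty$ and uses the asymptotics of $\bPst(\sigma_n\in\tau)$ through $\ga^*=(1-\ga-\tga)/\tga$ (Lemma~\ref{lem:Kstar}); that exponent is dictated by the true tail $\tga$ of $\sigma_1$, not by any small-scale behavior $\tilde\alpha_0$, so the hypothesis $\ga^*>1/2$ is still not satisfied for the model you built, and applying Theorem~\ref{thm:main}\eqref{eq:main2} ``block-wise'' has no clear meaning. The paper instead derives, from the Mayer/polynomial-chaos expansion and the subadditivity of $x\mapsto x^\zeta$, a purely finite, non-asymptotic sufficient criterion \eqref{less1} for a critical-point shift, with no constraint on $\ga^*$. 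It then constructs $\tau,\sigma$ with a very different structure from yours: probability $\gep$ at $n=1$ (which bounds $\bEst[|\sigma\cap\tau|]$ below by $\gep^2$), no mass on $(1,N/2)$, a uniform bump on $[N/2,N)$, and only then the prescribed polynomial tail starting at $N$. The technical heart is proving that this structure forces $\bPt^{N,\gep}(m\in\tau)$ to be small for all $m\geq N/2$ (the claims \eqref{mtauclaim1}--\eqref{mtauclaim2s}), so that the numerator in \eqref{less1} is tiny while the denominator is bounded below. Your proposal provides no analogue of these estimates and does not supply the new quantitative criterion that the construction actually requires.
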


We expect that, following the same scheme of proof, one can extend Theorem \ref{thm:exist} to the case $\ga=0$, $\tga\in(0,1)$. However, in the interest of brevity we do not prove it here, since it would require separate estimates from \cite{ABzero}.

\medskip
Let us now make some comments about these results.

\smallskip
{\bf 1.} Since the renewal $\tau\cap \sigma$ can be recurrent only if $\tilde\ga+\ga\geq 1$, one has from \eqref{eq:main1} that $\gb_c=0$ if and only if $\gb_c^{\a}=0$, that is if and only if $\tau\cap \sigma$ is recurrent. This is notable in that $|\tau\cap\sigma| = +\infty\ \bPst-$a.s.~is enough for $\tau$ to be pinned on a quenched trajectory of $\sigma$ for all $\gb>0$, even though a typical $\sigma$-trajectory is very sparse.

\smallskip
{\bf 2.} When $\ga+\tilde\ga=1$, $\tau\cap\sigma$ can be either recurrent or transient depending on \eqref{recur2}-\eqref{recur3}. Thus we have examples with transience where the critical points are positive and equal.

\smallskip
{\bf 3.} If $\bE_{\sigma}[\sigma_1]<+\infty$, we have a system of length approximately $\bE_{\sigma}[\sigma_1] N$. Here we already know from \cite{JRV05} that the renewal $\tau$ is pinned for all $\gb>0$, since there is a positive density of $\sigma$-renewal points in the system.

\smallskip
{\bf 4.} We note in Remark \ref{rem:monotone} that if  $n\mapsto \bP(n\in\tau)$ is non-increasing, then some calculations are simplified, and we can go slightly beyond the condition $\ga^*>1/2$, to allow $\ga^*=1/2$ with $\lim_{k\to\infty} \tilde \varphi(k)^{(1-\ga)/\tilde{\ga}} \bar\varphi(k) =+\infty$. Here $\bar\gp(k)=\gp(k)^{-1}$ if $\ga\in(0,1)$ and $\bar\gp(k)=\gp(k)/\bPt(\tau_1>k)^2$ if $\ga=0$. The monotonicity of $n\mapsto \bP(n\in\tau)$ follows easily if the renewal points $j$ of $\tau$ correspond to times $2j$ of return to $0$ of a nearest-neighbor Markov chain (birth-death process.) Indeed, one can then couple two trajectories (one non-delayed, the other one delayed by $2k$ for some  $k\geq 1$) the first time they meet, and show that  $\bP(k+n\in\tau)\leq \bP(n\in\tau)$ for any $n\geq 1$.

\smallskip
{\bf 5.}
The random walk pinning model \eqref{eq:RWPM} is quite analogous to our model, replacing only $\ind_{\{\sigma_n\in\tau\}}$ by $\ind_{\{X_n=Y_n\}}$.
In our model, the decay exponent $1+\ga^*$ of the probability $\bPst(\sigma_n\in\tau)$ corresponds to the parameter $\rho$ in \cite{BGdH11} (the decay exponent of $\bP_{X,Y}(X_{n}=Y_n)$.)
Theorem~\ref{thm:main} is in that sense stronger than its counterpart in the random walk pinning model, since $\ga^*>1/2$ translates into $\rho>3/2$, compared to $\rho>2$ in \cite{BGdH11}.
Moreover, Theorem \ref{thm:exist} supports the conjecture that the quenched and annealed critical points differ whenever $\rho>1$, see Conjecture 1.8 in \cite{BGdH11}.

\smallskip
{\bf 6.} One may argue that, in view of the annealed critical exponent $1/\ga^*$, the condition $\ga^*>1/2$ is somehow reminiscent of the Harris criterion for disorder relevance/irrelevance.  
Applying this criterion without further precaution tells that disorder should be irrelevant if $1/\ga^* >2 $ and relevant if $1/\ga^* < 2$.
However, in the Harris criterion formulation, disorder is irrelevant if it does not change the critical behavior \emph{provided that the disorder strength is sufficiently small}. 
Compared to the pinning model (where Harris' criterion has been confirmed, as mentioned in Introduction), the difference here -- as well as for the Random Walk Pinning Model -- is that one cannot tune the stength of the disorder: a quenched realization of $\sigma$ is given, and there is no extra parameter to play with to get an arbitrarily small disorder strength.  This is for example what makes it easier for us to prove a critical point shift in the case $\ga^*>1/2$, since according to Harris' prediction, disorder should be relevant whatever the disorder strength is -- which is here though to be fixed, positive. The case $\ga^*\in(0,1/2]$ is therefore more subtle, however our Theorem \ref{thm:exist} provides examples where $0<\ga^* \le 1/2$ but where a shift in the critical points still occur (so to speak, disorder is strong enough to shift the critical point).

\subsection{Variations of the main model: balanced and elastic polymers}\label{variants}

Pinning, in the quenched or annealed model, means that $\tau$ hits a positive fraction of the sites in the quenched renewal $\sigma$, up to some $\sigma_N$. The number of renewals in $\tau$ is unlimited.  We can alternatively consider the \emph{balanced polymer} with $\tau$ constrained to satisfy $\tau_N=\sigma_N$.

A second alternative pinning notion asks whether a positive fraction of $\tau$ renewals hit sites of $\sigma$, by considering a polymer of length $\tau_N$ instead of $\sigma_N$.  Physically this may be viewed as an \emph{elastic polymer}, since $\tau$ has a fixed number $N$ of monomers and needs to stretch or contract to place them on renewals of $\sigma$.

\subsubsection*{\bf First variation: balanced polymer, $\tau_N=\sigma_N$}
We first consider $\tau$ constrained to have $\tau_N=\sigma_N$, since in that case, both pinning notions are equivalent.

%; the alternative would be that the typical $i$ for which $\tau_i = \sigma_N$ satisfies $i \gg N$, meaning that the strategy of $\tau$ is to make far more renewals than $\sigma$, in order to maximize the chances for $\tau$ to intersect the quenched $\sigma$..  This is of particular interest when $\tau$ and $\sigma$ have the same distribution.

We introduce
\begin{equation}
\hZ_{N,\gb}^{\sigma} = \bE_{\tau}\big[\exp(\gb |\tau\cap\sigma|_{\sigma_N}) \ind_{\{\tau_N=\sigma_N\}} \big],
\end{equation}
and 
\begin{equation}
\label{defFtilde}
 \hat\mF(\gb):= \lim_{N\to\infty} \frac1N \log \hZ_{N,\gb}^{\sigma} = \lim_{N\to\infty} \frac1N  \bEs[\log \hZ_{N,\gb}^{\sigma}] \quad \bPs-a.s. 
\end{equation}
The proof of Proposition \ref{defF} applies here, establishing the existence and non-randomness of this limit.

For the balanced polymer we need the condition \eqref{nozero}, for otherwise the partition function is 0 with positive probability (whenever $\sigma_N=Nd_\sigma$, see \eqref{ddef}), and therefore $N^{-1} \bEs[\log \hZ_{N,\gb}^{\sigma}]  = -\infty$ for all $N\in\NN$.

Further, suppose $\bEs[\sigma_1]<\bEt[\tau_1]$. Then $\sigma_N\sim \bEs[\sigma_1]N$ a.s., so $\bPt(\tau_N = \sigma_N)$ decays exponentially in $N$.  It is easy to see that therefore $\hat\mF(\gb)<0$ for small $\beta$. In this sense the constraint $\tau_N = \sigma_N$ dominates the partition function, which is unphysical, so we assume $\bEs[\sigma_1]\geq\bEt[\tau_1]$, which ensures $\hat\mF(\gb)\geq 0$ for all $\beta>0$.
There then exists a critical point $\hat \gb_c :=\inf\{\gb:\hat\mF(\gb)>0\}$ such that $\hat\mF(\gb)>0$ if $\gb>\hat\gb_c$ and $\hat\mF(\gb)=0$ if $\gb\leq \hat\gb_c$.
The positivity of $\hat\mF(\gb)$ implies that $\tau$ visits a positive proportion of $\sigma$-renewal points, and also that a positive proportion of $\tau$-renewals sit on a $\sigma$-renewal point.
Clearly, one has that $Z_{N,\gb}^{\sigma}\geq \hZ_{N,\gb}^{\sigma}$, so that $\hat \gb_c\geq \gb_c$. The following proposition establishes equality, and is proved in Section \ref{sec:othermodels}.

\begin{proposition}
\label{prop:compareFtilde} 
Assume \eqref{eq:alphas} and \eqref{nozero}. If $\bE_{\sigma}[\sigma_1]\geq \bE_{\tau}[\tau_1]$, and in particular if $\bE_{\sigma}[\sigma_1]=+\infty$, then $\hat\gb_c = \gb_c$. 
\end{proposition}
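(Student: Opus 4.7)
The inequality $\hat\gb_c \geq \gb_c$ is immediate from $\hat Z_{N,\gb}^\sigma \leq Z_{N,\gb}^\sigma$. For the reverse, fix $\gb > \gb_c$ (so $\mF(\gb) > 0$); I will show $\hat\mF(\gb) > 0$.

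My plan is a tilted-partition-function argument. Decomposing $Z_{N,\gb}^\sigma$ according to $k=|\tau|_{\sigma_N}$,
\begin{equation*}
Z_{N,\gb}^\sigma = \sum_{k \geq 1} Z_{k,N,\gb}^\sigma, \qquad Z_{k,N,\gb}^\sigma := \bEt\bigl[e^{\gb|\tau\cap\sigma|_{\sigma_N}} \ind_{\{\tau_k = \sigma_N\}}\bigr],
\end{equation*}
so $\hat Z_{N,\gb}^\sigma = Z_{N,N,\gb}^\sigma$. For $h \in \RR$, introduce the tilted partition function $Z_{N,\gb,h}^\sigma := \sum_{k\geq 1} e^{hk} Z_{k,N,\gb}^\sigma$ with free energy $\mF_h(\gb) := \lim_N N^{-1} \log Z_{N,\gb,h}^\sigma$, whose existence follows from the same superadditivity used in Proposition \ref{defF}. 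The map $h \mapsto \mF_h(\gb)$ is convex and non-decreasing, with $\mF_0(\gb) = \mF(\gb) > 0$; moreover $\mF_h(\gb) \geq 0$ for all $h$, since the $k=1$ term gives $Z_{N,\gb,h}^\sigma \geq e^{h+\gb} \bPt(\tau_1 = \sigma_N)$, whose log is $o(N)$ as $\log\sigma_N = o(N)$ under \eqref{eq:alphas}. A Laplace/saddle-point argument, combined with a local large-deviation estimate for $|\tau|_{\sigma_N}$ under the tilted measure, yields
\begin{equation*}
\hat\mF(\gb) \geq \mF_{h^\star}(\gb) - h^\star,
\end{equation*}
where $h^\star$ is chosen so that $\partial_h \mF_{h^\star}(\gb) = 1$, i.e.\ the tilted-Gibbs density of $\tau$-renewals per unit $N$ equals $1$.

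The crucial step is to show $h^\star \leq 0$, which by convexity of $\mF_h$ is equivalent to $\partial_h \mF_0(\gb) \geq 1$, i.e.\ that the Gibbs-typical value of $|\tau|_{\sigma_N}/N$ is at least $1$. This is where the hypothesis $\bEs[\sigma_1] \geq \bEt[\tau_1]$ enters: heuristically, the pinning can only increase the density of $\tau$-renewals in $[0,\sigma_N]$ beyond the unconditioned density $1/\bEt[\tau_1]$, giving $|\tau|_{\sigma_N}/N \geq \bEs[\sigma_1]/\bEt[\tau_1] \geq 1$ via $\sigma_N/N \to \bEs[\sigma_1]$ $\bPs$-a.s.\ (the infinite-mean case $\bEs[\sigma_1]=\infty$ being even more favorable, since $\sigma_N/N \to \infty$). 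This can be made rigorous through a monotone/FKG-type coupling between $\mP_{N,\gb}^\sigma$ and $\bPt(\cdot\mid\sigma_N\in\tau)$. Granted $h^\star\leq 0$, we obtain $\hat\mF(\gb)\geq\mF_{h^\star}(\gb)-h^\star\geq -h^\star\geq 0$, with strict positivity: if $h^\star<0$ then $\hat\mF(\gb)\geq-h^\star>0$, and if $h^\star=0$ then $\hat\mF(\gb)\geq\mF_0(\gb)=\mF(\gb)>0$. Either way $\hat\mF(\gb)>0$, hence $\hat\gb_c\leq\gb_c$.

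The main obstacle is making the density bound $\partial_h \mF_0(\gb) \geq 1$ rigorous. The finite-mean case $\bEt[\tau_1]<\infty$ should follow from a clean stochastic comparison, but the delicate situation is $\bEt[\tau_1]=\infty$ (allowed when $\bEs[\sigma_1]=\infty$), where the unconditioned renewal density is $0$ and cannot serve as a benchmark; there one must combine the pinning contact-density bound $|\tau\cap\sigma|_{\sigma_N}\gtrsim c(\gb)N$ (valid for $\gb>\gb_c$) with the infinite-mean asymptotics $\sigma_N/N\to\infty$, likely invoking the tail estimates of Lemma \ref{lem:Kstar}. A secondary technical point is the local large-deviation estimate underlying the bound $\hat\mF(\gb)\geq\mF_{h^\star}(\gb)-h^\star$, which must cover uniformly the Gaussian, $\ga$-stable and boundary regimes of $|\tau|_{\sigma_N}$.
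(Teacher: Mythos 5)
Your plan is genuinely different from the paper's. The paper argues by a direct good-block localization: it partitions into blocks of $\sigma$-length $L$, declares a block good when its constrained free energy is already close to $\mF(\gb)$ \emph{and} $\sigma_L\leq v_0 b_L$, and then forces $\tau$ to land on the good blocks; the cap $\sigma_L\leq v_0 b_L$ means a good block costs at most $v_0 b_L$ units of $\tau$-index, so the balance constraint $\tau_N=\sigma_N$ is handled by bookkeeping on the number of $\tau$-renewals per block, with the leftover ``traversal'' costs controlled by the ad hoc estimates $\bEs[\log\bPt(\tau_{mL}\leq\sigma_{mL})]=o(mL)$ (Lemmas \ref{lem:logtausigma}--\ref{lem:logtausigma1}). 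Your proposal instead tilts in $k=|\tau|_{\sigma_N}$ and runs a Legendre-transform argument, which is a clean and conceptually attractive global strategy.

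However, as written, the two load-bearing steps are both unsupported.

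First, the inequality $\hat\mF(\gb)\geq\mF_{h^\star}(\gb)-h^\star$ requires more than Laplace's principle: the trivial bound $e^{h^\star N}\hat Z_{N,\gb}^\sigma\leq Z_{N,\gb,h^\star}^\sigma$ gives the inequality in the \emph{wrong} direction, and to reverse it you need a \emph{local} statement — that at $h=h^\star$ the term $k=N$ captures, up to a polynomial factor, the whole tilted sum $\sum_k e^{h^\star k}Z_{k,N,\gb}^\sigma$. This is a quenched local limit theorem for $|\tau|_{\sigma_N}$ under the Gibbs measure, which in the heavy-tailed regimes of \eqref{eq:alphas} (including $\ga$-stable fluctuations with $\ga<1$, boundary cases $\ga=1$, and the transition regime near $\gb_c$) is a substantial theorem in itself. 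Relatedly, you also need $h^\star$ to exist, i.e.\ that $1$ lies in the range of $\partial_h\mF_h(\gb)$; this uses both that $\partial_h\mF_h$ reaches $\geq 1$ as $h\to\infty$ (where \eqref{nozero} is relevant) and that $\mF_h$ has no corner at that level, neither of which is immediate.

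Second, the reduction of $h^\star\leq 0$ to ``the Gibbs-typical $|\tau|_{\sigma_N}/N$ is $\geq 1$'' is right, but the claimed proof collapses. The appeal to FKG for the renewal indicator field $(\ind_{\{i\in\tau\}})_i$ is not available in general; renewal processes do not satisfy positive association without extra structure on the gap law, and no such hypothesis is in force here. Without it you cannot deduce $\bE_{\mP_{N,\gb}^\sigma}[|\tau|_{\sigma_N}]\geq\bEt[|\tau|_{\sigma_N}\mid\sigma_N\in\tau]$, and the benchmark $\sigma_N/\bEt[\tau_1]\geq N$ is itself vacuous when $\bEt[\tau_1]=\infty$ (a case that is allowed, e.g.\ $\ga<1$ and $\tga<1$). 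In that regime the only free bound is $|\tau|_{\sigma_N}\geq|\tau\cap\sigma|_{\sigma_N}\geq c(\gb)N$, and $c(\gb)\to 0$ as $\gb\downarrow\gb_c$, so this cannot deliver $\partial_h\mF_0(\gb)\geq 1$ uniformly down to $\gb_c$. You flag this gap yourself, but it is precisely the case the proposition must cover, and the proposal has no mechanism for it. The paper avoids these difficulties entirely by capping, in the definition of a good block, the length $\sigma_L\leq v_0 b_L$, so that $|\tau|_{\sigma_N}\leq N$ is \emph{built into} the localization strategy rather than deduced from the Gibbs measure; it is worth understanding why that explicit construction sidesteps the need for a typical-density lower bound.
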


Hence, here again, $\hat\gb_c=0$ if and only if $\tau\cap\sigma$ is recurrent.

\subsubsection*{\bf Second variation: elastic polymer of length $\tau_N$}
In the elastic polymer, pinning essentially means that a positive fraction of $\tau$-renewals fall on $\sigma$-renewals.  The partition function is
\begin{equation}
\bar Z_{N,\gb}^{\sigma} = \bE_{\tau}\big[\exp(\gb |\tau\cap\sigma|_{\tau_N}) \big].
\end{equation}
Standard subadditivity methods to establish existence of $\lim_{N\to\infty} N^{-1} \log \bar Z_{N,\gb}^{\sigma}$ do not work here, but we can consider the $\liminf$ instead, since its positivity implies pinning in the above sense.
We therefore define 
\begin{equation}
\label{defFbar}
 \bar\mF(\gb):= \liminf_{N\to\infty} \frac1N  \bEs \log \bar Z_{N,\gb}^{\sigma} ,
\end{equation}
which is non-decreasing in $\gb$, and the critical point $\bar \gb_c := \inf \{\gb : \bar \mF(\gb)>0 \}$. %We now compare $\bar \gb_c$ with $\gb_c$.
Compared to the balanced polymer, here it is much less apparent \emph{a priori} that we should have $\bar \gb_c \geq \gb_c $.  It could be favorable for $\tau$ to jump far beyond $\sigma_N$ to reach an exceptionally favorable stretch of $\sigma$, before $\tau_N$.  The original model \eqref{defmu} does not permit this type of strategy, so the question is whether this allows pinning in the elastic polymer with $\beta<\beta_c$.  The answer is no, at least if $\ga>0$, or if $\ga=0$ and $\tga\geq 1$, as the following shows; the proof is in Section \ref{sec:othermodels}.  We do not know whether $\bar \gb_c=\gb_c $ when $\ga=0$ and $\tga<1$.

\begin{proposition}
\label{prop:compareFbar}
Assume \eqref{eq:alphas} with $\tga>0$:

(i) $\bar \gb_c \leq \gb_c$ ;

(ii) $\bar \gb_c = \gb_c$ in both cases $\ga>0$ and $\ga=0,\, \tga\geq 1$.
\end{proposition}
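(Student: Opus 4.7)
Throughout, introduce $Z_{K,M,\gb}^\sigma := \bE_\tau\bigl[e^{\gb|\tau\cap\sigma|_{\sigma_K}} \ind_{\tau_M = \sigma_K}\bigr]$, so that $Z_{K,\gb}^\sigma = \sum_{M\ge 1} Z_{K,M,\gb}^\sigma$, a sum with at most $\lfloor\sigma_K/d_\tau\rfloor$ nonzero terms. Two observations drive both parts: on the event $\{\tau_M=\sigma_K\}$ one has $|\tau\cap\sigma|_{\tau_M} = |\tau\cap\sigma|_{\sigma_K}$, so
\[
  \bar Z_{M,\gb}^\sigma \;\ge\; Z_{K,M,\gb}^\sigma \qquad \text{for every } K,M\ge 1;
\]
and $\bar Z_{N,\gb}^\sigma$ is non-decreasing in $N$ since $|\tau\cap\sigma|_{\tau_N}$ is.

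\emph{Plan for (i).} Fix $\gb>\gb_c$, so by Proposition \ref{defF}, $Z_{K,\gb}^\sigma \ge e^{K\mF(\gb)/2}$ for all large $K$, $\bPs$-a.s. I would produce a constant $C=C(\gb)>0$ and, for each large $K$, an index $M^\ast(K)\in [1,CK]$ with $Z_{K,M^\ast(K),\gb}^\sigma \ge Z_{K,\gb}^\sigma/(2CK)$. This would follow by pigeonhole from a tail bound of the form
\[
  \sum_{M>CK} Z_{K,M,\gb}^\sigma \;\le\; e^{\gb K}\, \bPt\bigl(|\tau|_{\sigma_K}>CK\bigr) \;\le\; \tfrac12 Z_{K,\gb}^\sigma.
\]
Granted this, the monotonicity of $\bar Z_N$ together with the key inequality above gives, setting $K(N):=\max\{K:M^\ast(K)\le N\}\ge N/C$,
\[
  \bar Z_{N,\gb}^\sigma \;\ge\; \bar Z_{M^\ast(K(N)),\gb}^\sigma \;\ge\; \frac{1}{2CK(N)}\exp\!\bigl(K(N)\mF(\gb)/2\bigr),
\]
so $\bar\mF(\gb)\ge\mF(\gb)/(2C)>0$, i.e.\ $\bar\gb_c\le\gb$; letting $\gb\downarrow\gb_c$ yields (i). The hard part is the tail estimate on $|\tau|_{\sigma_K}$ in the regime $\tga<1$: then $\sigma_K\sim K^{1/\tga}$ grows super-linearly and the naive Chernoff bound $\bEt[e^{-\lambda\tau_1}]^{CK} e^{\lambda\sigma_K}$ is too weak. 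I would circumvent this by conditioning on the realized sequence of $\sigma$-gaps and applying the Chernoff estimate gap-by-gap, so that the excess of $\tau$-renewals placed in each individual $\sigma$-gap accumulates to an exponential penalty in $K$ when $C$ is chosen large relative to the typical local $\tau$-density.

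\emph{Plan for (ii).} In view of (i) it remains to prove $\bar\gb_c\ge\gb_c$, equivalently $\gb<\gb_c\Rightarrow\bar\mF(\gb)=0$. Setting $K^\ast:=|\sigma|_{\tau_N}$, we have $|\tau\cap\sigma|_{\tau_N}=|\tau\cap\sigma|_{\sigma_{K^\ast}}$, hence
\[
  \bar Z_{N,\gb}^\sigma \;=\; \sum_{K\ge 0}\bEt\Bigl[e^{\gb|\tau\cap\sigma|_{\sigma_K}}\ind_{\sigma_K\le\tau_N<\sigma_{K+1}}\Bigr].
\]
Applying the Markov property at the last $\tau$-renewal $\tau_M\le\sigma_K$ (noting $|\tau\cap\sigma|_{\sigma_K}=|\tau\cap\sigma|_{\tau_M}$ on $\tau_{M+1}>\sigma_K$) and bounding the residual $(N{-}M)$-step trajectory's landing in $(\sigma_K,\sigma_{K+1})$ by a free $\tau$-probability, one arrives at
\[
  \bar Z_{N,\gb}^\sigma \;\le\; \sum_{K\ge 0} Z_{K,\gb}^{\sigma,\mathrm{free}}\, \pi_K(N),
\]
where the weights $\pi_K(N)\ge 0$ sum to at most $1$. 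Remark \ref{rem:constrainedvsfree} combined with $\gb<\gb_c$ gives $\log Z_{K,\gb}^{\sigma,\mathrm{free}}=o(K)$ $\bPs$-a.s., so each summand is sub-exponential in $K$; under the hypothesis $\ga>0$ or $(\ga=0,\tga\ge 1)$ the mass of $\pi_K(N)$ concentrates on a range of $K$ polynomially bounded in $N$ (since $\tau_N$ then has a tight typical scale -- $\sim N\bEt[\tau_1]$ for $\ga\ge 1$, $\sim N^{1/\ga}$ times a slowly varying function for $0<\ga<1$, and comparable when $\ga=0,\tga\ge 1$), so the total is sub-exponential in $N$ and $\bar\mF(\gb)=0$. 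The main difficulty is controlling $\pi_K(N)$ together with the polynomial factor $\sigma_K^{2+\ga}$ of Remark \ref{rem:constrainedvsfree}; in the excluded case $\ga=0$, $\tga<1$ the typical $\tau_N$ grows too slowly relative to the heavy-tailed $\sigma$-gaps for this polynomial correction to be absorbed into the sub-exponential factor, which is why that regime is left open.
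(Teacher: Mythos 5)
Both parts of your plan hit genuine obstacles in exactly the regimes the proposition covers. For part (i), the tail bound $\bPt(|\tau|_{\sigma_K} > CK) \leq \tfrac12 e^{-\gb K} Z_{K,\gb}^\sigma$ cannot hold for any fixed $C=C(\gb)$ when $0<\tga<\ga<1$ (a regime compatible with $\ga+\tga<1$, hence $\gb_c>0$): on the $\bPs$-typical event, $\sigma_K$ is of order $b_K \sim K^{1/\tga}$ and then $|\tau|_{\sigma_K}$ is of order $\sigma_K^{\ga} \sim K^{\ga/\tga}$, which is super-linear since $\ga/\tga>1$, so the left side tends to $1$; meanwhile $e^{-\gb K} Z_{K,\gb}^\sigma \leq \bPt(\sigma_K\in\tau) \to 0$. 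Your proposed ``gap-by-gap Chernoff'' fix does not help, because $\sigma$-gaps are heavy-tailed: a single gap of order $\sigma_K$ is typical and by itself contains $\sim K^{\ga/\tga}$ $\tau$-renewals. The number of $\tau$-renewals per $\sigma$-renewal is simply not $O(1)$, so the pigeonhole with cutoff $CK$ collapses. The paper's good-block event $G_L$ of \eqref{eq:defAtilde} \emph{includes the clause} $\sigma_L\leq v_0 b_L$, which caps the $\tau$-renewal count per $L$-block at $v_0 b_L$ (growing like $b_L$, not $L$); the resulting bound on $\bar\mF(\gb)$ carries the small prefactor $L/(v_0 b_L)$, and that is what keeps the strategy alive.

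For part (ii), the claimed inequality $\bar Z_{N,\gb}^\sigma \leq \sum_K Z_{K,\gb}^{\sigma,\mathrm{free}}\pi_K(N)$ is both unjustified (you would need a negative-correlation statement between the Gibbs weight accrued before $\sigma_K$ and the location of $\tau_N$) and, even granted, useless for large $K$: with $\pi_K(N)=\bPt(\sigma_K\leq\tau_N<\sigma_{K+1})$, the dominant term $\sum_K e^{\gep K}\pi_K(N) = \bEt[e^{\gep |\sigma|_{\tau_N}}]$ is infinite for every $\gep>0$, because for $\ga<1$ the random variable $\tau_N$, hence $|\sigma|_{\tau_N}$, has only polynomial $\bPt$-tails. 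The crucial fact you lose by passing to $Z_{K,\gb}^{\sigma,\mathrm{free}}$ is that $|\tau\cap\sigma|_{\tau_N}\leq N$ \emph{always}, independently of $K$; for $K\gg N$ the Hamiltonian is capped at $N$, not $K$, and $e^{\gep K}$ massively overestimates the true contribution. Moreover, even on the typical scale $K\sim N^{\tga/\ga}$ your bound is superexponential in $N$ whenever $\tga>\ga$, and the claim that $\tau_N$ has a ``tight typical scale'' that is ``comparable when $\ga=0,\tga\geq 1$'' is wrong: for $\ga=0$ the scale of $\tau_N$ grows superpolynomially. The paper instead argues forward (from $\bar\mF(\gb)>0$ to $\mF(\gb)>0$), restricting $\tau$ to its first $N$ renewals so the Hamiltonian is automatically $\leq N$, truncating the $\sigma$-index at $mv_N$ with $v_N=e^{2\tga\gb N/\ga}$ so that the excluded tail contributes at most $e^{\gb N}\bPt(\tau_N>\sigma_{v_N-1})\leq 1$, and then exhibiting in Lemma \ref{lem:Ztrunc} a randomized stopping-time localization strategy showing the truncated partition function still grows exponentially; the case $\ga=0,\tga\geq 1$ is dispatched separately by $\gb_c=\gb_c^{\a}\leq\bar\gb_c$ via Theorem \ref{thm:main}.
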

 In particular, $\bar \gb_c=0$ whenever $\tau\cap\sigma$ is recurrent. 

\subsection{Organization of the rest of the paper}
We now present a brief outline of the proofs, and how they are organized.

\smallskip
In Section \ref{sec:proof1}, we prove the first part \eqref{eq:main1} of Theorem \ref{thm:main}, to establish pinning of the quenched system when $\beta>\gb_c^{\a}$, with $\ga+\tga\geq 1$. We use a rare-stretch strategy to get a lower bound on the partition function:
the idea is to consider trajectories $\tau$ which visit exponentially rare stretches where the renewals of $\sigma$ have some (not-well-specified) special structure which reduces the entropy cost for $\tau$ to hit a large number of $\sigma$ sites. In Section \ref{sec:preliminaries}, we classify trajectories of $\sigma$ according to the size of this entropy cost reduction, then select a class which makes a large contribution to the annealed free energy $\mF^{\a}(\gb)>0$ (we will fix $\gb>\gb_c^{\a}$). We call trajectories in this class \emph{accepting}, and the localization strategy of Section \ref{sec:localization} consists in visiting all accepting segments of $\sigma$. More detailed heuristics are presented in Section \ref{sec:sketch1}. 
We also prove \eqref{eq:main3} in Section \ref{sec:finitevol}, thanks to a finite volume criterion.

\smallskip
In Section \ref{sec:proof2}, we prove the second part \eqref{eq:main2} of Theorem \ref{thm:main}. First, we rewrite the partition function as that of another type of pinning model --- see Section \ref{sec:chgpartitionfunction}; this reduces the problem to a system at the annealed critical point $\gb_c^\a>0$.
We then employ a fractional moment method, combined with coarse-graining, similar to one developed for the disordered pinning model \cite{BL15,DGLT07,GLT08,GLT09}, and later used for the random walk pinning model \cite{BT09,BS08,BS09}.
In Sections \ref{sec:fracmoment}-\ref{sec:coarsegraining}, we show how one can reduce to proving a finite-size estimate of the fractional moment of the partition function.
Then to show that the fractional moment of the partition function is small, we describe a general change of measure argument in Section~\ref{sec:chgmeasure}. The change of measure is based on a set $J$ of trajectories $\sigma$, defined in Section \ref{sec:eventJ}, which has high probability, but becomes rare under certain conditioning which, roughly speaking, makes the partition function $Z_{N,\gb}^{\sigma}$ large. Identification of such an event is the key step; our choice of $J$ only works in full generality for $\ga^*>1/2$, necessitating that hypothesis.

\smallskip
In Section \ref{sec:example}, we give another sufficient condition for differing critical points, see \eqref{less1}. Then, we construct examples of $\tau,\sigma$ for which this condition holds, to prove Theorem \ref{thm:exist}.

\smallskip
In Section \ref{sec:othermodels}, we study the variants of the model introduced in Section \ref{variants}, and prove Propositions \ref{prop:compareFtilde}-\ref{prop:compareFtilde}. We mainly use some particular localization strategies to obtain lower bounds on the free energy of these two models.

\smallskip
The Appendix is devoted to the proof of technical lemmas on renewal processes.

\subsection{Notations}
\label{sec:notations}

Throughout the paper, $c_i$ are constants which depend only on the distributions of $\tau$ and $\sigma$, unless otherwise specified.  

Recall the truncated means
\[
  \olm(n) = \bEt(\tau_1\wedge n), \quad \tolm(n) = \bEs(\sigma_1\wedge n).
\]
When $\ga,\tga>0$ we denote by $a_n$ the ``typical size'' of $\tau_n$ (in the sense that $(\tau_n/a_n)_{n\ge 1}$ is tight), $b_n$ the typical size of $\sigma_n$, $A_n$ the typical size of $\max\{\tau_1,\tau_2-\tau_1,\dots,\tau_n-\tau_{n-1}\}$, and $B_n$ the typical size of $\max\{\sigma_1,\sigma_2-\sigma_1,\dots,\sigma_n-\sigma_{n-1}\}$.  More precisely, $B_n$ is defined (up to asymptotic equivalence) by 
\begin{align}\label{defAnBn}
  B_1=d_\sigma, \qquad B_n\geq d_\sigma, \qquad B_n^{\tga} \tphi(B_n)^{-1} \stackrel{n\to\infty}{ \sim} n,
\end{align}
(see \eqref{ddef}) and $b_n$ by
\begin{align}\label{defanbn}
% a_n &= \bEt[\tau_1] n \text{  if } \bEt[\tau_1]<\infty, \quad a_n^{-\ga} \, \gp(a_n) \stackrel{n\to\infty}{ \sim} \frac1n \text{  if } \bEt[\tau_1]=\infty,\notag\\
  b_n=B_n \text{  if } 0<&\tga<1, \quad 
    b_n\tolm(b_n)^{-1} \stackrel{n\to\infty}{ \sim} n \text{  if $\tga=1$ and } \bEs[\sigma_1]=\infty, \quad
    \notag\\
  &b_n = \bEs[\sigma_1] n \text{  if } \bEs[\sigma_1]<\infty, 
%   b_n^{-1}\tolm(b_n) \stackrel{n\to\infty}{ \sim} \frac1n \text{  if } \alpha=1, \bEs[\sigma_1]=\infty,
\end{align}
and $A_n,a_n$ are defined analogously for the distribution of $\tau_1$.
The following can be found in~\cite{Bingham}.
These sequences are unique up to asymptotic equivalence, and in the infinite-mean case with $0<\ga,\tga\leq 1$,
there exist slowly varying functions $\psi$ and $\tilde\psi$, whose asymptotics are explicit in terms of $\ga,\tga$ and $\gp,\tilde\gp$, such that
\begin{equation}
\label{defanbn2}
a_n = \psi(n) \, n^{1/\ga} \, , \qquad b_n=\tilde \psi(n) \, n^{1/\tga}.
\end{equation}
The sequences $A_n^{-1}\max\{\tau_1,\tau_2-\tau_1,\dots,\tau_n-\tau_{n-1}\}$ and $B_n^{-1}\max\{\sigma_1,\sigma_2-\sigma_1,\dots,\sigma_n-\sigma_{n-1}\}$ converge in distribution, and the limits have strictly positive densities on $(0,\infty)$.
When the corresponding exponents $\ga,\tga $ are in $ (0,1)$, the ratios $\tau_n /a_n$ and $\sigma_n/b_n$ converge to stable distributions, which have strictly positive densities on $(0,\infty)$.
%see \cite[Section IX.8]{Fel}.  
When the exponent $\ga,\tga \in [1,\infty)$, the same ratios converge in probability to $1$. 
This follows from the law of large numbers when the mean is finite, and  from Theorem 8.8.1 in \cite{Bingham} when $\ga=1$. 

Define
\begin{equation}\label{dbardef}
  \bar d_\tau = \min\{n\geq 1: \bPt(k\in\tau)>0 \quad\text{for all } k\geq n\},
  \end{equation}
and $\bar d_\sigma$ similarly.

\section{Rare-stretch strategy: proof of the first part of Theorem \ref{thm:main}}
\label{sec:proof1}

\subsection{Sketch of the proof}
\label{sec:sketch1}

In the recurrent case, we may use super-additivity argument to obtain directly a lower bound on the quenched free energy via a finite-volume criterion, as presented in Section \ref{sec:finitevol}. This gives us that $\gb_c=0$ when $\ga^*<0$, together with a lower bound on the order of the quenched phase transition. However, it is not helpful when $\ga^*=0$ and in particular in the transient case, when we try to prove that $\gb_c=\gb_c^{\a}>0$, and we need a novel idea.

We use a rare-stretch localization strategy to show that, when $\ga+\tga\geq 1$, we have $\mF(\gb)>0$ for all $\gb>\gb_c^{\a}$.
The idea is to identify ``accepting'' stretches of $\sigma$ that we then require $\tau$ to visit; the contribution to the free energy from such trajectories $\tau$ is shown sufficient to make the free energy positive. Here, the definition of an accepting stretch is made without describing these stretches explicitly, in the following sense: for fixed large $\ell$, different configurations $\{\sigma_1,\dots,\sigma_\ell\}$ are conducive to large values of $|\tau\cap\sigma|_{\sigma_\ell}$ to different degrees, as measured by the probability $\bP_{\tau}\big(|\tau\cap\sigma|_{\sigma_\ell} \geq \td \ell )$ for some appropriate $\td$.  We divide (most of) the possible values of this probability into  a finite number of intervals, which divides configurations $\sigma$ into corresponding classes; we select the class responsible for the largest contribution to the annealed partition function, and call the corresponding trajectories $\sigma$ accepting. This is done in Section \ref{sec:preliminaries}. We then get in Section \ref{sec:localization} a lower bound for the quenched partition function by considering trajectories of length $N\gg \ell$ which visit all accepting stretches of $\sigma$ of length~$\ell$.

\subsection{Finite-volume criterion and proof of \eqref{eq:main3}}
\label{sec:finitevol}

The finite-volume criterion comes directly from \eqref{subadd}: if there exists some $N=N_{\gb}$ such that $\bEs[\log Z_{N_{\gb},\gb}^{\sigma}] \ge 1$, then we have that $\mF(\gb) \ge N_{\gb}^{-1}>0$. 
We will use this simple observation to obtain a lower bound on the free energy in the case $\ga^*<0$, showing both $\gb_c=0$ and  \eqref{eq:main3}.

We write $Z_{N,\gb}^{\sigma} = \bPt(\sigma_N\in\tau) \bEt\big[e^{\gb |\tau\cap\sigma|_{\sigma_N}} |\sigma_N\in\tau \big]$, so by Jensen's inequality we get that
\begin{equation}
\label{eq:finitevol}
\bEs \log Z_{N,\gb}^{\sigma} \ge  \gb \bEs\Big[ \bEt\big[ |\tau\cap\sigma|_{\sigma_N} \, |\,  \sigma_N\in\tau\big]  \Big] +\bEs\log \bPt(\sigma_N\in \tau).
\end{equation}
Let us now estimate the terms on the right. For the second term,  we get thanks to \eqref{Doney} that for large $N$ (hence large $\sigma_N$), $\log \bPt(\sigma_N\in \tau) \ge  - 2 \log \sigma_N$. Then, we have $\bEs[\log \sigma_N ] \le 2 \tga^{-1} \log N$ for $N$ large enough, as established below in \eqref{Elogsigma} -- in fact the factor $2$ can be replaced by a constant arbitrarily close to $1$.

It remains to get a lower bound on the first term. For this, it is enough to get a lower bound on $\bEs\big[ \bPt(\sigma_k \in\tau | \sigma_n\in\tau) \big]$ for $k\le N/2$. For $b_n$ from \eqref{defanbn}, using the fact that $\bPt(x\in\tau)$ is regularly varying (see \eqref{Doney}) we get that there exists a constant $c_1$ such that for $k\le N/2$ large enough
\begin{align*}
\bEs\big[ \bPt(\sigma_k \in\tau | &\sigma_N\in\tau) \big] = \bEs\left[\frac{\bPt(\sigma_k \in \tau) \bPt(\sigma_N-\sigma_k \in\tau)}{ \bPt(\sigma_N \in\tau)} \right]\\
& \ge \bPs\Big( \sigma_k \le b_k, \sigma_{N}- \sigma_k \ge b_{N}, \sigma_N \le 4 b_{N} \Big) \times c_1 \frac{\bPt( b_k \in\tau) \bPt( b_N \in\tau) }{\bPt(4 b_N \in\tau)} \, .
\end{align*}
Since $\sigma_n/b_n$ converges in distribution, the first probability is bounded below by a constant. Hence $\bEs\big[ \bPt(\sigma_k \in\tau | \sigma_n\in\tau) \big]$ exceeds a constant times $\bPt(b_k \in\tau)$ which is regularly varying with exponent $- (1-\ga\wedge 1) / \tga\wedge 1 = -(1+\ga^*)$, thanks to \eqref{defanbn}-\eqref{defanbn2} and \eqref{Doney}. There is therefore a slowly varying function $\gp_1^*(\cdot)$ such that
\[\bEs\big[ \bPt(\sigma_k \in\tau | \sigma_N\in\tau) \big] \ge \gp_1^*(k) k^{-(1+\ga^*)}  \quad \text{for all } k\le N/2\, .\]
 We finally obtain that for $\ga^* <0$, for some $c_2$,
\[ 
\bEs\Big[ \bEt\big[ |\tau\cap\sigma|_{\sigma_N} |\sigma_N\in\tau \big]  \Big]
\ge \sum_{k =1}^{N/2}\bEs\big[ \bPt(\sigma_k \in\tau | \sigma_n\in\tau) \big] \ge c_2\gp_1^*(N) N^{-\ga^*} \, ,
\]
so by \eqref{eq:finitevol}, 
\begin{equation}
\label{eq:finitevol2}
\bEs \log Z_{N,\gb}^{\sigma} \ge  c_2\gb \gp_1^*(N) N^{-\ga^*} - 2 \tga^{-1} \log N.
\end{equation}
Fix $\gep\in(0,1)$, and for $\gb>0$ set $N_{\gb}^{\gep} := \gb^{-(1+\gep)/|\ga^*| }$, so that $N_{\gb}^{\gep}\to +\infty$ as $\gb\downarrow 0$. In view of \eqref{eq:finitevol2} there exists some $\gb_0 =\gb_0(\gep)$ such that for any $0<\gb\le \gb_0$, we have  $\bEs \log Z_{N_{\gb}^{\gep},\gb}^{\sigma} \ge 1$.
This shows that for every $\gep>0$ and every $\gb \le \gb_0=\gb_0(\gep)$ we have  
\[ \mF(\gb) \ge \frac{1}{N_{\gb}^{\gep}} = \gb^{(1+\gep)/|\ga^*| } >0 \, . \]
Since $\gep$ is arbitrary, this proves that when $\ga^*<0$ we have $\gb_c=0$ and 
\[\limsup_{\gb\downarrow 0} \frac{ \log \mF(\gb)}{\log \gb}  \le \frac{1}{|\ga^*|} \, .\]

\medskip
On the other hand, the upper bound $\mF(\gb)\le \mF^{\a}(\gb)$ together with Proposition \ref{prop:annealed} gives, in the case $\ga^*<0$ (so that in particular $\gb_c^{\a}=0$)
\[\liminf_{\gb\downarrow 0} \frac{ \log \mF(\gb)}{\log \gb}  \ge \liminf_{\gb\downarrow 0} \frac{ \log \mF^{\a}(\gb)}{\log \gb} =  \frac{1}{|\ga^*|} \, ,\] 
and this ends the proof of \eqref{eq:main3} when $\ga^*<0$.

\smallskip
For the case $\ga^*=0$, it is more delicate since $\bEs\big[ \bEt\big[ |\tau\cap\sigma|_{\sigma_N} |\sigma_N\in\tau \big]  \big]$ grows only as a slowly varying function, and may not be enough to compensate the logarithmic term in~\eqref{eq:finitevol}. We develop another approach in the following Sections, to prove that $\gb_c = \gb_c^{\a}$ also when $\ga^*=0$.
Once we have proven that $\gb_c=\gb_c^{\a}$, we get directly from $\mF(\gb)\le \mF^{\a}(\gb)$ and Proposition \ref{prop:annealed}  that
\[\liminf_{\gb\downarrow \gb_c} \frac{ \log \mF(\gb)}{\log \gb}  \ge \liminf_{\gb\downarrow \gb_c^{\a}} \frac{ \log \mF^{\a}(\gb)}{\log \gb} =+\infty \,, \]
thus proving \eqref{eq:main3} also when $\ga^* =0$.

\subsection{Preliminaries: definition of \emph{accepting} stretches}
\label{sec:preliminaries}
We first define the accepting property for $\sigma$.
Let $I_\tau$ and $I$ be the rate functions, for contact fractions $\delta$, for the renewals $\tau$ and $\tau\cap\sigma$, respectively: 
\begin{equation}\label{ratefunc}
  I_\tau(\gd):= -\lim_{n\to\infty} \frac{1}{n} \log \bP_{\tau} (|\tau|_n \geq \delta n), \quad
  I(\gd):= -\lim_{n\to\infty} \frac{1}{n} \log \bP_{\sigma,\tau} (|\tau\cap\sigma|_n \geq \delta n).
 \end{equation}
It is straightforward that $I_\tau(\delta)=\delta J_\tau(\delta^{-1})$, where $J_\tau$ is the usual rate function for i.i.d.~sums distributed as the gap between renewals in $\tau$, and similarly for $I(\delta)$. Let $\delta_{\max} = \sup\{\delta\in [0,1]:I(\delta)<\infty\}$ and $0<\delta<\delta_{\max}$. Then
\begin{equation}
\label{eq:ratefun}
\bP_{\sigma,\tau}(|\tau\cap\sigma|_n \geq \delta n) = e^{-I(\delta)\, n\, (1+o(1))}.
\end{equation}

For technical reasons, we need the following lemma: it says that the rate function $I(\gd)$ is unchanged when also imposing $\sigma_n\in\tau$, and that $\sigma_n$ grows only polynomially.

\begin{lemma}
\label{lem:LDP}
Suppose $\tga>0$. For all $\gd<\gd_{\max}$, for $b_n$ from \eqref{defanbn},
\begin{equation}\label{zerorate}
\lim_{n\to\infty} \frac{1}{n} \log \bP_{\sigma,\tau}\Big( \sigma_n \in \tau, \sigma_n \leq 2b_n\ \big|\ |\tau\cap\sigma|_n \geq \delta n \Big) = 0,
\end{equation}
so that
\begin{equation}
\bP_{\sigma,\tau}\Big( |\tau\cap\sigma|_n \geq \delta n \, , \, \sigma_n \in \tau, \sigma_n \leq 2b_n\Big) = e^{-I(\delta)\, n\, (1+o(1))}.
\end{equation}
\end{lemma}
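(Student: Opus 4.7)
The plan is to reduce the second identity to the first via \eqref{eq:ratefun}: since $\bPst(|\tau\cap\sigma|_n\geq \delta n) = e^{-I(\delta)n(1+o(1))}$, a sub-exponential conditional probability in the first identity directly yields the asymptotic in the second. So everything amounts to the matching lower bound
\[
\bPst\bigl(|\tau\cap\sigma|_n \geq \delta n,\ \sigma_n \in \tau,\ \sigma_n \leq 2 b_n\bigr) \geq e^{-(I(\delta)+o(1))n}.
\]

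I would first insert the constraint $\sigma_n\in\tau$ at polynomial cost in $n$. Fix any $\sigma$ with $\sigma_n\leq 2b_n$, and decompose $\{|\tau\cap\sigma|_n\geq\delta n\}$ according to the largest $k^*\in[\lceil\delta n\rceil,n]$ with $\sigma_{k^*}\in\tau$. For each such $k$, the renewal property of $\tau$ at $\sigma_k$ gives
\[
\bPt(|\tau\cap\sigma|_n\geq\delta n, k^*=k)\leq \bPt(|\tau\cap\sigma|_k\geq\delta n,\sigma_k\in\tau)\leq \frac{\bPt(|\tau\cap\sigma|_n\geq\delta n,\sigma_n\in\tau)}{\bPt(\sigma_n-\sigma_k\in\tau)},
\]
where the last step uses $\bPt(|\tau\cap\sigma|_k\geq\delta n,\sigma_k\in\tau,\sigma_n\in\tau)=\bPt(|\tau\cap\sigma|_k\geq\delta n,\sigma_k\in\tau)\cdot\bPt(\sigma_n-\sigma_k\in\tau)$ by renewal. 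From \eqref{Doney}, each case yields $\bPt(m\in\tau)\geq m^{-C}$ for some $C=C(\alpha)$ and large $m$, so with $\sigma_n-\sigma_k\leq 2b_n$ regularly varying we get $\bPt(\sigma_n-\sigma_k\in\tau)\geq n^{-C'}$. Summing over $k$ (at most $n$ terms) and integrating over $\sigma$ restricted to $\{\sigma_n\leq 2b_n\}$ reduces the task to showing
\[
\bPst(|\tau\cap\sigma|_n \geq \delta n,\ \sigma_n \leq 2 b_n) \geq e^{-(I(\delta)+o(1))n}.
\]

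For this I would fix $\delta'\in(\delta,\delta_{\max})$ and consider
\[q_n:=\bPst(|\tau\cap\sigma|_n\geq \delta' n,\ \sigma_n\in\tau,\ \sigma_n\leq 2b_n).\]
The renewal property of $\tau\cap\sigma$ at $\sigma_n\in\tau$, combined with the super-additive bound $b_{n_1}+b_{n_2}\leq b_{n_1+n_2}$ (immediate from \eqref{defanbn} in each of the regimes $\tga\in(0,1)$, $\tga=1$, $\tga>1$), gives $q_{n_1+n_2}\geq q_{n_1}q_{n_2}$, so $F_{\delta'}:=\lim_n n^{-1}\log q_n$ exists with $F_{\delta'}\leq-I(\delta')$ by the annealed upper bound. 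For the matching lower bound, view $\tau\cap\sigma$ in $\sigma$-coordinates as a renewal process with i.i.d.\ pairs $(\xi_j,\eta_j)$ ($\xi_j$ the $\sigma$-index gap, $\eta_j$ the $\sigma$-position gap), and perform a Cram\'er tilt: let $\nu^*(\xi,\eta):=e^{\theta^*\xi}\nu(\xi,\eta)/Z(\theta^*)$ with $\theta^*<0$ chosen so that the tilted mean of $\xi$ is $1/\delta'$. Under the product tilt $\mathbf{Q}$: (i) $|\tau\cap\sigma|_n/n\to\delta'$ in $\mathbf{Q}$-probability; (ii) a local CLT for $S_m=\sum_{j\leq m}\xi_j$ at its mean $n$ gives $\mathbf{Q}(\sigma_n\in\tau)\asymp n^{-1/2}$; (iii) since the tilt preserves the conditional law of $\eta$ given $\xi$, $\eta$ inherits the $\tga$-tail of $\sigma_1$ and $\sum_{j\leq\delta' n}\eta_j$ is of order $b_{\delta' n}\leq b_n$, yielding $\mathbf{Q}(\sigma_n\leq 2b_n)\geq c>0$. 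On $\{\sigma_n\in\tau,|\tau\cap\sigma|_n=m\}$ one has $S_m=n$, so $d\bP/d\mathbf{Q}=Z(\theta^*)^m e^{-\theta^*n}$; for $m=(\delta'+o(1))n$ this equals $e^{-I(\delta')n(1+o(1))}$, yielding $q_n\geq e^{-(I(\delta')+o(1))n}$ and hence $F_{\delta'}\geq -I(\delta')$. Letting $\delta'\downarrow\delta$ and using continuity of $I$ on the interior of its effective domain (from convexity) finishes the argument.

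The main obstacle is point (iii): controlling the law of $\sigma_n$ under the tilt $\mathbf{Q}$. Because $\eta_j$ is a sum of $\xi_j$ copies of $\sigma$-gaps conditioned on $\tau$-avoidance until the $\xi_j$-th, its tail under $\mathbf{Q}$ is not automatic and must be verified case-by-case using \eqref{Doney} and the scaling of $b_n$ in \eqref{defanbn}--\eqref{defanbn2}, in each of the three regimes of $\tga$ (and in the recurrent vs.\ transient cases of $\tau\cap\sigma$). This technical case analysis is presumably the content deferred to the appendix.
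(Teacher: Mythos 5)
There is a genuine gap, and it comes from conflating two different contact-fraction counters that live on different scales. The lemma and the rate function $I(\delta)$ are stated for $|\tau\cap\sigma|_n = |\tau\cap\sigma\cap\{1,\dots,n\}|$, which counts common renewals up to \emph{absolute position} $n$. The object that your super-multiplicativity and Cram\'er tilt naturally control is $|\tau\cap\sigma|_{\sigma_n} = |\{k\leq n:\sigma_k\in\tau\}|$, the count over \emph{$\sigma$-indices}. When $\tga<1$, $\sigma_n$ is of order $b_n\gg n$, so these two counters differ wildly, and $\{|\tau\cap\sigma|_n\geq\delta n\}$ is a much rarer event than $\{|\tau\cap\sigma|_{\sigma_n}\geq\delta n\}$: it forces $\sigma$ itself to be heavily compressed in $(0,n]$.

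Concretely, this breaks both pillars of your Step~2b. The super-multiplicative bound $q_{n_1+n_2}\geq q_{n_1}q_{n_2}$ fails: gluing a configuration on $(0,\sigma_{n_1}]$ to a shifted one on $(\sigma_{n_1},\sigma_{n_1+n_2}]$ gives you many common renewals in $(0,n_1]\cup(\sigma_{n_1},\sigma_{n_1}+n_2]$, but since $\sigma_{n_1}\geq n_1$ (and typically $\sigma_{n_1}\gg n_1$), this does \emph{not} give $|\tau\cap\sigma|_{n_1+n_2}\geq\delta'(n_1+n_2)$, because the second block sits far to the right of $(0,n_1+n_2]$. And the Cram\'er tilt $\nu^*(\xi,\eta)=e^{\theta^*\xi}\nu(\xi,\eta)/Z(\theta^*)$ (tilting only the $\sigma$-index gap $\xi$, leaving $\eta$ given $\xi$ untouched) does not produce the compression needed: under $\mathbf{Q}$ the $\delta'n$-th common renewal sits at position $\sum_{j\leq \delta'n}\eta_j\asymp b_{\delta'n}\gg n$, so $|\tau\cap\sigma|_n/n\to 0$ in $\mathbf{Q}$-probability, and the constraint $|\tau\cap\sigma|_n\geq\delta'n$ still carries an unaccounted large-deviation cost. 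The identity ``on $\{\sigma_n\in\tau,|\tau\cap\sigma|_n=m\}$ one has $S_m=n$'' is false for this counter (it is true for $|\tau\cap\sigma|_{\sigma_n}$), and this is exactly where the mismatch becomes a wrong Radon--Nikodym computation. Your Step~2a is mostly workable after cleaning up the $|\tau\cap\sigma|_k$ vs.\ $|\tau\cap\sigma|_{\sigma_k}$ notation (and with care about $\bPt(\sigma_n-\sigma_k\in\tau)>0$ when $n-k$ is small), but it is in any case superseded: your Step~2b reinserts $\sigma_n\in\tau$ directly, so the reduction to the $\tau$-free event is never used.

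For comparison, the paper avoids all of this by never trying to re-derive the rate function from scratch. It observes that, conditionally on $\{|\tau\cap\sigma|_n\geq\delta n\}$, the event that the required $\lceil\delta n\rceil$ common renewals already fit inside $(0,(1-\eta)n]$ has conditional probability at least $e^{-\gep n(1+o(1))}$ for $\eta$ small (this is just \eqref{eq:ratefun} applied at the nearby contact fraction $\delta/(1-\eta)$ and continuity of $I$), and then, from any such configuration ending at position $\leq(1-\eta)n$ with $\sigma$-index $k\leq(1-\eta)n$, the remaining $n-k$ $\sigma$-gaps land $\sigma_n$ in $[\bar d_\tau, 2b_n]$ with a probability bounded below by an inverse power of $n$ (using $\bPs(\sigma_{n-k}\leq b_{n-k})\geq\theta$ and \eqref{Doney}). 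That exploits precisely the two-scale structure that your argument misses: the constraint $\sigma_n\leq 2b_n$ is cheap because $b_n$ already dominates $n$, and the cheapness of $\sigma_n\in\tau$ is a consequence of leaving an $\eta n$-wide buffer.
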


\begin{proof}
Let $\gep>0$ and let  $m$ be the least integer in $[\delta n,\infty)$. Since $\gd<\gd_{\max}$, there exists $\eta>0$ with 
\begin{align}\label{smallrate}
\lim_{n\to\infty} &\frac{1}{n} \log \bP_{\sigma,\tau}\Big( (\tau\cap\sigma)_m \leq (1-\eta) n\ \big|\ |\tau\cap\sigma|_n \geq \delta n \Big) \notag\\
&\geq \lim_{n\to\infty} \frac{1}{n} \log \bP_{\sigma,\tau}\Big( |\tau\cap\sigma|_{(1-\eta)n} \geq \gd n\ \big|\ |\tau\cap\sigma|_n \geq \delta n \Big) > -\gep.
\end{align}
Since $b_n > n$ and $\sigma_n/b_n$ converges either to a stable limit or to 1, there exist $n_0\geq 1$ and $\theta>0$ as follows. For $n$ large and $\gd n \leq j\leq (1-\eta)n$, $m\leq k\leq j$,
\begin{align}
\bP_{\sigma,\tau}&\Big( \sigma_n \in \tau, \sigma_n \leq 2b_n\ \big|\ (\tau\cap\sigma)_m=j ; \sigma_k = j\Big) \notag\\
&\geq  \bPs\big(\sigma_{n-k} \leq  b_{n-k}\big) \min_{ k \leq n+b_n} \bPt(k\in\tau) \notag\\
&\geq \theta \min_{ k \leq 2b_n} \bPt(k\in\tau),
\end{align}
which is bounded below by an inverse power of $n$, uniformly in such $j$.  Therefore 
\[
  \lim_{n\to\infty} \frac{1}{n} \log \bP_{\sigma,\tau}\Big( \sigma_n \in \tau, \sigma_n \leq 2b_n\ \big|\ (\tau\cap\sigma)_m \leq (1-\eta) n \Big) = 0,
\]
which with \eqref{smallrate} proves \eqref{zerorate}, since $\gep$ is arbitrary.
\end{proof}

\medskip 
Since we take $\gb>\gb_c^{\a}=\gb_c^{\hom}$, the associated annealed and homogeneous double pinning free energies are positive:
\begin{equation}
\label{eq:Fannsup}
\mF^{\a}(\gb) \geq \mF^{\hom}(\beta)  = \sup_{\delta>0}(\beta\delta - I(\delta)) > 0.
\end{equation}
Let $\td<\delta_{\max}$ satisfy
\begin{equation} \label{eq:dtilde}
  \beta\td - I(\td) > \frac 12 \mF^{\hom}(\beta), 
\end{equation}
and let
\[
  f_n(\sigma) :=  \bP_{\tau}\big(|\tau\cap\sigma|_n \geq \td n \, , \, \sigma_n \in \tau \big) \,\ind_{\{\sigma_n \leq 2 b_n \}}.
\]
Note that $f_n$ is actually a function of the finite renewal sequence
$ B := \{0,\sigma_1,\dots,\sigma_n\},$
so we may write it as $f_n(B)$.
We decompose the probability that appears in $f_n$ according to what portion of the cost $I(\td)n$ is borne by $\sigma$ versus by $\tau$: we fix $\gep>0$ and write
\begin{multline}\label{splitks}
 \bP_{\sigma,\tau}\big(|\tau\cap\sigma|_n \geq \td n, \sigma_n \in \tau, \sigma_n \leq 2 b_n \big) = \bE_{\sigma}[f_n(\sigma)] \\
    \leq \sum_{0 \leq k \leq 1/\gep} \bP_{\sigma}\left(f_n(\sigma) \in \big( e^{-(k+1)\gep I(\td)n},e^{-k\gep I(\td)n} \big] \right) e^{-k\gep I(\td)n}
      + e^{-(1+\gep)I(\td)n} \, .
\end{multline}
Then by Lemma \ref{lem:LDP}, there exists some $n_\gep$ large enough so that for $n\geq n_{\gep}$, 
\[
e^{-(1+\gep)I(\td)n} \leq \tfrac12  \bP_{\sigma,\tau}\big(|\tau\cap\sigma|_n \geq \td n, \sigma_n \in \tau, \sigma_n \leq 2 b_n \big)\, .
\]
Hence choosing $k_0=k_0(n)\in[0,1/\gep]$  to be the index of the largest term in the sum in \eqref{splitks}, we have
\begin{multline}
\label{prod}
  \bP_{\sigma}\left(f_n(\sigma) \in \big( e^{-(k_0+1)\gep I(\td)n},e^{-k_0\gep I(\td)n} \big] \right) e^{-k_0\gep I(\td)n}\\
   \geq \frac12 \, \frac{1}{1+1/\gep}\  \bPst\big(|\tau\cap\sigma|_n \geq \td n,\sigma_n\in\tau, \sigma_n \leq 2 b_n \big).
\end{multline}

We let $a:=a(n):=k_0\gep$, so $a\in [0,1]$ represents, roughly speaking, the proportion of the cost $I(\td)$ borne by $\tau$ in the most likely cost split, i.e.~in the $k=k_0$ term. Lemma \ref{lem:LDP} says that if $n\geq n_{\gep}$ for some $n_\gep$ large enough, one has that
\[
\bPst\big(|\tau\cap\sigma|_n \geq \td n,\sigma_n\in\tau, \sigma_n \leq 2 b_n \big) \geq e^{-(1+\gep/2) I(\td) n}\, .
\]
Then, \eqref{prod} gives that
\begin{equation}
\label{eq:probabaccepting}
  \bP_{\sigma}\left(f_n(\sigma) \in \big( e^{-(a+\gep )I(\td)n},e^{-a I(\td)n} \big]  \right) 
    \geq \half\, \frac{\gep}{1+\gep}\ e^{a I(\td)n} e^{-(1+\gep/2) I(\td) n} \geq e^{-(1-a+\gep) I(\td)n},
\end{equation}
where the last inequality holds provided that $n$ is large enough ($n\geq n_{\gep}$).

This leads us to define, for $n\geq n_\gep$ the event $\mA_n$ for $\sigma$ (or more precisely for $\{0,\sigma_1,\ldots,\sigma_n\}$) of being \emph{accepting}, by
\begin{equation}
\label{def:accepting}
\sigma\in\mA_n \qquad \text{ if } \qquad  f _n(\sigma)\in ( e^{-(a+\gep )I(\td)n},e^{-a I(\td)n} ] \, .
\end{equation}
Then \eqref{eq:probabaccepting} gives the lower bound
\[
\bPs(\sigma \in\mA_n) \geq e^{-(1-a+\gep) I(\td)n} \quad\text{if}\quad n\geq n_\gep\, .
\]
Moreover we have for $n\geq n_{\gep}$
\begin{align}
\label{eq:onaccepting}
\bE_{\tau}\big[ e^{\gb |\tau\cap \sigma|_{\sigma_{n} } } \ind_{\{\sigma_n \in \tau\}} \big] \ind_{\{ \sigma \in\mA_n \}} &\geq e^{\gb \td n } \ \bP_{\tau}\Big( |\tau\cap\sigma|_{\sigma_n}\geq \td n, \sigma_n \in\tau,  \sigma_n\leq 2 b_n\Big) \ \ind_{\{ \sigma \in\mA_n \}} \notag\\
&\geq  e^{( \gb \td n-(a+\gep) I(\td) )n}\ \ind_{\{ \sigma \in\mA_n \}} .
\end{align}

\subsection{Localization strategy}
\label{sec:localization}

Let us fix $\gb>\gb_c^{\a}=\gb_c^{\hom}$, $\td$ as in \eqref{eq:dtilde}
and $\ell$ large (in particular $\ell\geq n_\gep$, so that \eqref{eq:onaccepting} holds for $n=\ell$.)
We divide $\sigma$ into segments of $\ell$ jumps which we denote $Q_1, Q_2,\ldots$, that is, $Q_i:=\{\sigma_{(i-1)\ell +1} - \sigma_{(i-1)\ell} ,\ldots, \sigma_{i \ell} - \sigma_{(i-1)\ell}  \}$.
We say that $Q_i$ is an \emph{accepting segment} if $Q_i\in \mA_\ell$. We now write $\mA$ for $\mA_\ell$, and $a$ for $a(\ell)$.
Let 
\begin{equation} \label{pAdef}
  p_\mA:=\bP_{\sigma}(Q_i \in \mA) \geq e^{-(1-a+\gep) I(\td)\ell},
\end{equation}
where the inequality is from \eqref{eq:probabaccepting}.
Informally, the strategy for $\tau$ is then to visit all accepting segments, with no other restrictions on other regions. On each such segment one has an energetic gain of at least  $e^{( \gb \td n-(a+\gep) I(\td) )n}$, thanks to \eqref{eq:onaccepting}.

We define $M_0:=0$, and iteratively $M_{i}:=\inf\{j>M_{i-1} \, ;\, Q_j\in \mA\}$; then $M_i-M_{i-1}$ are independent geometric random variables of parameter $p_{\mA}$.
Imposing visits to all the accepting segments $Q_{M_{i}}$ (that is imposing $\sigma_{M_i\ell},\sigma_{(M_i+1)\ell}\in\tau$), one has
\begin{equation} \label{allaccept0}
  Z_{M_k\ell,\gb}^{\sigma} \geq \bE_\tau\left( \prod_{i=1}^k \exp\bigg( \beta \big|\sigma\cap\tau\cap (\sigma_{(M_i-1)\ell},\sigma_{M_i\ell}]\big| \bigg)
    \ind_{\{\sigma_{M_i\ell} \in \tau\} } \ind_{\{ \sigma_{(M_i-1)\ell} \in\tau \} }\right) \, .
\end{equation}
So, using \eqref{eq:onaccepting}, and with the convention that $\bPt(0\in\tau)=1$, we have
\begin{equation}
\label{allaccept}
\log Z_{M_k\ell,\gb}^{\sigma} \geq \sum_{i=1}^{k} \bigg(\log \bP_{\tau}\big( \sigma_{(M_{i}-1)\ell} - \sigma_{M_{i-1} \ell}\in \tau \big) + \gb \td \ell-(a+\gep) I(\td) \ell \bigg).
\end{equation}
Letting $k$ go to infinity, and using the Law of Large Numbers twice, we get
\begin{align}
\label{eq:boundF}
\mF(\gb) &\geq \liminf_{k\to\infty} \frac{1}{M_k \ell} \log Z_{M_k\ell,\gb}^{\sigma} \notag\\
&\geq  \frac{1}{\bE_{\sigma} [M_1]} \left( \frac1\ell \bE_{\sigma}\log \bP_{\tau}\big( \sigma_{(M_{1}-1) \ell} \in \tau \big) 
  + \gb \td -(a+\gep) I(\td) \right),
\end{align}
with $\bE_{\sigma}[M_1]=1/p_\mA$.

\medskip
We are left with estimating $\frac{1}{\ell} \bE_{\sigma}\log \bP_{\tau}\big( \sigma_{(M_{1}-1) \ell} \in \tau \big)$. 
For any $\ga\geq 0$ and non-random times $n$, \eqref{Doney} ensures that, for any $\eta>0$, one can find some $n_{\eta}$ such that, for all $n\geq n_{\eta}$,
\begin{equation}
\log \bPt(n\in\tau) \geq  - (1-\ga\wedge 1 +\eta) \log n\, .
\end{equation}

Therefore if $\ell_{\eta}$ is chosen large enough, and $\ell\geq \ell_{\eta}$, one has
\begin{equation}
\label{eq:Doney1}
\bE_{\sigma}\log \bP_{\tau}\big( \sigma_{(M_{1}-1) \ell} \in \tau \big)\geq -(1-\ga\wedge 1+\eta) \bE_{\sigma}[\ind_{\{M_1>1\}}\log \sigma_{(M_1-1)\ell}].
\end{equation}
The following is proved below.

\begin{lemma} \label{logsigma}
Let $\eta>0$ and suppose $\tilde\ga>0$ in \eqref{eq:alphas}.  Provided $\ell$ is sufficiently large we have
\[
  \bE_{\sigma}[\ind_{\{M_1>1\}}\log \sigma_{(M_1-1)\ell}] \leq \frac{1+\eta}{\tilde\ga\wedge 1} \left( \log \ell +\log\frac{1}{p_\mA} \right).
\]
\end{lemma}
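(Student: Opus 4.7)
The heuristic picture is that $M_1$ is geometric with parameter $p_\mA$ (since the segments $Q_j$ are IID, each accepting with probability $p_\mA$), so $\bEs[M_1-1]\approx 1/p_\mA$. Conditional on $\{M_1=m\}$, $\sigma_{(m-1)\ell}$ is the sum of $m-1$ IID copies of $\sigma_\ell$ under the law conditioned on $\{Q_1\notin\mA\}$, and hence has typical size $b_{(m-1)\ell}$. Since $b_n$ is regularly varying of index $1/(\tga\wedge 1)$ by \eqref{defanbn}--\eqref{defanbn2}, one has $\log b_{\ell/p_\mA}\sim (\tga\wedge 1)^{-1}(\log\ell+\log(1/p_\mA))$, which matches the target.

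The implementation uses Jensen's inequality via fractional moments. Fix $q\in(0,\tga\wedge 1)$. The plan is: (i) condition on $\{M_1=m\}$ and use $\log x=q^{-1}\log x^q$ with concavity to bound $\bEs[\log\sigma_{(m-1)\ell}\mid M_1=m]\leq q^{-1}\log \bEs[\sigma_{(m-1)\ell}^q\mid M_1=m]$; (ii) remove the conditioning by observing that $\sigma_{(m-1)\ell}$ depends only on $Q_1,\ldots,Q_{m-1}$ and that $\{Q_1,\ldots,Q_{m-1}\notin\mA\}$ has probability $(1-p_\mA)^{m-1}$, at the price of the reciprocal factor; (iii) apply the standard $L^q$-bound $\bEs[\sigma_n^q]\leq C_q b_n^q$ (valid for $q<\tga\wedge 1$, by tail estimates for sums of heavy-tailed IID variables) together with the regular-variation bound $b_n\leq n^{(1+\eta/2)/(\tga\wedge 1)}$ for large $n$; (iv) average over $M_1$, using Jensen on $\log$ to get $\bEs[\log(M_1-1)\ind_{M_1>1}]\leq \log\bEs[M_1-1]\leq \log(1/p_\mA)$, and $\log(1/(1-p_\mA))\leq 2p_\mA$ for small $p_\mA$ to see that the error introduced in step (ii) contributes only $O(1)$ on average.

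Combining these steps yields
\[
\bEs[\log\sigma_{(M_1-1)\ell}\ind_{M_1>1}]\leq \frac{1+\eta/2}{\tga\wedge 1}(\log\ell+\log(1/p_\mA))+O(1).
\]
Since $\log\ell+\log(1/p_\mA)\to\infty$ as $\ell\to\infty$ (using $p_\mA\leq 1$), the additive constant is absorbed by replacing $1+\eta/2$ with $1+\eta$ for $\ell$ sufficiently large, giving the claimed inequality.

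The main obstacle is the infinite-mean regime $\tga\in(0,1]$: there $\bEs[\sigma_1]=\infty$, so applying Jensen directly to $\bEs[\sigma_n]$ is useless and one must turn to fractional moments. The cost is a $1/q$ factor in Jensen's bound, which can be brought arbitrarily close to $1/(\tga\wedge 1)$ by taking $q\uparrow\tga\wedge 1$; this is the role of the slack $\eta$. The finite-mean case $\tga>1$ admits a simpler argument (direct Jensen on $\bEs[\sigma_n]$ with $q=1$), but the fractional-moment approach covers both cases uniformly.
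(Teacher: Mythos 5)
Your proof is correct and takes a genuinely different route from the paper's.  The paper first establishes the deterministic estimate $\bE_{\sigma}[\log\sigma_n]\leq\frac{1+4\eta}{\tga\wedge 1}\log n$ directly from the tail bound $\bPs(\sigma_n\geq n^{t/\tga})\leq n^{1-t+\eta}$ (via Doney), and then handles the random index $M_1$ by partitioning its range into $\{M_1\leq 2n_\mA\}$ and $\{(m-1)n_\mA<M_1\leq mn_\mA\}$ for $m\geq 3$, with $n_\mA=K/p_\mA$; in each band it uses exchangeability of the increments of $\sigma$ to decompose $\log\sigma_{mn_\mA\ell}$ into a sum of $m$ blocks, each of which is deconditioned at the exponential cost $(1-p_\mA)^{-n_\mA}\approx e^{K}$, and then sums the resulting geometric series.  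Your argument replaces this range decomposition with a cleaner two-stage Jensen scheme: fractional-moment Jensen $\bEs[\log X\mid\cdot]\leq q^{-1}\log\bEs[X^q\mid\cdot]$ to convert the logarithm of $\sigma_n$ into an $L^q$ moment (so that the slowly-varying correction can be absorbed via $\bEs[\sigma_n^q]\leq C_q b_n^q$), deconditioning at cost $(1-p_\mA)^{-(m-1)}$ for exact $M_1=m$, then a second Jensen on $\log(M_1-1)$ over the geometric law of $M_1$.  The key cancellation — that $\sum_m\bPs(M_1=m)(m-1)\log\frac{1}{1-p_\mA}=\bEs[M_1-1]\log\frac{1}{1-p_\mA}=O(1)$ uniformly in $p_\mA\in(0,1)$ — replaces the paper's dyadic-in-$n_\mA$ summation and is tidier.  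One small caveat on presentation: the remark that the $1/q$ factor from Jensen is what forces $q\uparrow\tga\wedge 1$ is not quite right, since that factor cancels against the exponent $q$ of $b_n^q$, leaving $\log b_n\leq\frac{1+\eta/2}{\tga\wedge 1}\log n$ regardless of $q$; the role of $q<\tga\wedge 1$ is only to ensure finiteness of $C_q$ in the moment bound $\bEs[\sigma_n^q]\leq C_q b_n^q$, while the $\eta$ slack absorbs the slowly varying factor and the $O(1)$ additive errors once $\ell$ is large.  That moment bound itself (standard uniform-integrability consequence of $\sigma_n/b_n$ converging to a stable law, or of the big-jump asymptotics) is the one ingredient you invoke without proof; it is true but deserves at least a citation or the two-line tail-integration argument.
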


Combining \eqref{eq:Doney1} with Lemma \ref{logsigma}, one gets for $\ell\geq \ell_\eta$, and provided that $\ell_{\eta}$ is large so that $\frac{1}{\ell}\log \ell \leq \eta $,
\begin{equation}
\label{eq:Doney3}
\frac{1}{\ell} \bE_{\sigma}\log \bP_{\tau}\big( \sigma_{(M_{1}-1) \ell} \in \tau \big)\geq -(1-\ga\wedge 1 +\eta)  \frac{1+\eta}{\tilde\ga\wedge 1} \Big( \frac{1}{\ell} \log\frac{1}{p_\mA} +\eta \Big).
\end{equation}
From  \eqref{eq:boundF} and \eqref{eq:Doney3}, and assuming that $\eta$ is sufficiently small (depending on $\gep$), we get
\begin{equation} \label{freelower}
  \mF(\gb) \geq p_\mA\left( \beta\tilde\gd - (a+\gep)I(\tilde\gd)  - \frac{1-\ga\wedge 1 }{\tilde\ga\wedge 1}\, (1+\gep) \, \frac1\ell \log\frac{1}{p_\mA}  -\gep\right).
\end{equation}
Now, the crucial point is that $\ga+\tga\geq 1$, so that $(1-\ga\wedge 1)/\tilde\ga\wedge 1 \leq 1$, and one has
\begin{align}
 \mF(\gb)   &\geq  p_\mA\left( \beta\tilde\gd - (a+\gep)I(\tilde\gd) - (1+\gep)  \frac1\ell \log\frac{1}{p_\mA} -\gep \right) \notag \\
  &\geq p_\mA\left( \beta\tilde\gd - (a+\gep)I(\tilde\gd) - (1+\gep)  (1-a +\gep) I(\td) -\gep \right)  \notag\\
  &\geq  p_\mA\left( \frac12 \mF^{\hom}(\gb) - (3-a+\gep)\gep I(\tilde\gd) -\gep \right)\, ,
\end{align}
where we used \eqref{pAdef} in the second inequality, and \eqref{eq:dtilde} in the third one. If we choose $\gep$ small enough, we therefore have that $\mF(\gb)\geq \frac14 p_\mA \mF^{\hom}(\gb)>0$, as soon as $\gb>\gb_c^\a=\gb_c^{\hom}$. This completes the proof of \eqref{eq:main2} Theorem \ref{thm:main}.\qed

\begin{proof}[Proof of Lemma \ref{logsigma}]
Let us first bound $\bE_\sigma \log \sigma_n$ for deterministic large $n$.  Consider first $\tga\in(0,1)$.  Let $\eta\in (0,1/4)$.
From \cite{Doney}, whenever $k/b_n \to\infty$, $\bPs(\sigma_n =k) \sim n \bPs(\sigma_1 =k)$. Hence using \eqref{eq:alphas}, uniformly in $t>1+\eta$, as $n\to\infty$,
\begin{equation}\label{tail}
\bPs(\sigma_n \geq n^{t/\tga}) =  (1+o(1)) n \bPs(\sigma_1 \geq n^{t/\tga} ) \leq n^{1-t +\eta}\, .
\end{equation}

Therefore there exists some $n_\eta$ such that if $n\geq n_\eta$,
\begin{align} \label{integral}
  \bE_\sigma \frac{\tilde\ga \log \sigma_n}{\log n} &\leq 1 + \eta + \int_{1+\eta}^\infty \bPs\left(\log \sigma_n \geq \frac{t}{\tilde\ga}\log n\right)\ dt\notag\\
  &\leq 1 + \eta + n^{1+\eta} \int_{1+\eta}^\infty e^{-t\log n}\ \dd t \ \leq\ 1 + \eta + \frac{1}{\log n},
\end{align}
so that, provided that $n$ is large enough,
\begin{equation} \label{sigman}
  \bE_\sigma \log \sigma_n \leq \frac{1+2\eta}{\tilde\ga} \log n.
\end{equation}

For $\tga\geq 1$, we can multiply the probabilities $\bPs(\sigma_1=n)$ by the increasing function $cn^\gamma$ for appropriate $c,\gamma>0$, and thereby obtain a distribution with tail exponent in $(1-\eta,1)$ which stochastically dominates the distribution of $\sigma_1$.  This shows that
\[
  \bE_\sigma \log \sigma_n \leq \frac{1+2\eta}{1-\eta} \log n.
\]
Thus for all $\tga>0$ and $\eta\in (0,1/4)$, for $n$ large,
\begin{equation} \label{Elogsigma}
  \bE_\sigma \log \sigma_n \leq \frac{1+4\eta}{\tilde\ga\wedge 1} \log n.
\end{equation}

\medskip
Now let $n_\mA=K/p_\mA$, with $K$ (large) to be specified.
First, provided that $\ell$ is large enough,
\begin{equation}
\label{M2}
\bE_{\sigma} \big[ \ind_{\{M_1>1\}} \ind_{\{M_1 \leq 2 n_\mA\}}\log \sigma_{(M_1-1)\ell} \big] \leq \bE_{\sigma} \big[  \log \sigma_{2 n_\mA\ell} \big]\leq \frac{1+4\eta}{\tga \wedge 1} \log (2 n_\mA \ell)\, ,
\end{equation}
where we used \eqref{Elogsigma} in the last inequality.
Second, fixing $m\geq 3$, we have for any $j\in \big((m-1)n_{\mA}, m n_\mA\big]$
\begin{align} \label{condl}
  \bE_{\sigma}[\ind_{\{M_1>1\}}& \log \sigma_{(M_1-1)\ell} \mid M_1=j ]
  = \bE_\sigma\left[ \log \sigma_{(j-1)\ell} \mid Q_1 \notin \mA,\dots,Q_{j-1} \notin \mA \right]\notag \\
  &\leq \bE_\sigma\left[ \log \sigma_{m n_\mA\ell} \mid Q_1 \notin \mA,\dots,Q_{m n_\mA} \notin \mA \right]\notag \\
  &\leq \sum_{r=1}^m \bE_\sigma\bigg[ \log\big(\sigma_{rn_\mA\ell} - \sigma_{(r-1)n_\mA\ell} \big)\ \big|\ Q_i \notin \mA\ \forall i \in \big( (r-1) n_\mA , r n_\mA\big]  \bigg] \notag\\
  &= m\, \bE_\sigma( \log \sigma_{n_\mA\ell} \mid Q_i \notin \mA\ \forall i \leq n_\mA) \notag\\
  &\leq m\,  \frac{ \bE_\sigma[ \log \sigma_{n_\mA\ell}] }{ (1-p_\mA)^{n_\mA} } \, .
\end{align}
Hence, using \eqref{Elogsigma}, one has that if $\ell$ is large enough,
\begin{align} \label{sumj}
  \bE_{\sigma}\Big[ \ind_{\{M_1 \in  ((m-1)n_{\mA}, m n_\mA ] \}}\log \sigma_{(M_1-1)\ell} \Big]
    &\leq \bPs\big(M_1>  (m-1)n_{\mA} \big) \, m\, \frac{1+4\eta}{\tilde\ga \wedge 1} \frac{ \log(n_\mA\ell) }{ (1-p_\mA)^{n_\mA} } \notag\\
    &\leq m (1-p_\mA)^{(m-2) n_\mA} \frac{1+4\eta}{\tilde\ga \wedge 1} \log(n_\mA\ell) \notag\\
    &\leq 2 m e^{- (m-2) K} \frac{1}{\tilde\ga \wedge 1} \log(n_\mA\ell) \, .
\end{align}
Combining \eqref{M2}, and \eqref{sumj}, we obtain, provided that $K$ is large enough (depending on $\eta$)
\begin{align} \label{combine}
  \bE_{\sigma}\big[ \ind_{\{M_1>1\}}\log \sigma_{(M_1-1)\ell} \big]&\leq 
   \frac{1+4\eta}{\tga \wedge 1} \log (2 n_\mA \ell) + \sum_{m=3}^\infty 2 m e^{- (m-2) K} \frac{1}{\tilde\ga \wedge 1} \log(n_\mA\ell)  \notag\\
 & \leq  \frac{1+5\eta}{\tilde\ga \wedge 1} \log(2n_\mA\ell) \notag\\
 &\leq \frac{1+6\eta}{\tilde\ga \wedge 1} \left( \log \ell + \log \frac{1}{p_\mA} \right),
\end{align}
where the last inequality is valid provided that $\ell$ is large.
\end{proof}

\section{Coarse-graining and change of measure: proof of the second part of Theorem \ref{thm:main}}
\label{sec:proof2}

In this section and the following ones, we deal with the case $\ga+\tga < 1$. In particular, one has $0\leq \ga<1,\ 0<\tga<1$ and $\ga^*>0$. Moreover, the renewal $\tau\cap \sigma$ is transient, so $\bEst[|\tau\cap\sigma|]<+\infty$, and the annealed critical point is $\gb_c^\a = -\log\big( 1-\bE_{\sigma,\tau}[|\tau\cap\sigma|]^{-1}\big) >0$.

\subsection{Alternative representation of the partition function}
\label{sec:chgpartitionfunction}

We use a standard alternative representation of the partition function, used for example in the Random Walk Pinning Model, see \cite{BGdH11} and \cite{BT09,BS08,BS09}, and in other various context: it is the so-called \textit{polynomial chaos expansion}, which is the cornerstone of \cite{CSZ14}.

We write $e^{\gb}=1+z$ (called Mayer expansion), and expand $(1+z)^{\sum_{n=1}^{N} \ind_{\{\sigma_n\in\tau\}}}$: we get
\begin{equation}
\label{expand1}
Z_{N,\gb}^{\sigma} = \sum_{m=1}^{N} z^m \hspace{-.3cm}\sum_{1\leq i_1<\cdots<i_m=N} \hspace{-.3cm}  \bE_{\tau}\bigg[\prod_{k=1}^m \ind_{\{\sigma_{i_k}\in\tau\}} \bigg]
 = \sum_{m=1}^{N} z^m  \hspace{-.3cm} \sum_{1\leq i_1<\cdots<i_m=N} \prod_{k=1}^m \bE_{\tau}[\ind_{\{\sigma_{i_k}-\sigma_{i_{k-1}} \in\tau \}}],
\end{equation}
where $i_0:=0$. Now, let us  define
\begin{equation}
\label{eq:defK*}
K^*(n):= \frac{1}{\bE_{\tau,\sigma}[|\tau\cap\sigma|]}\bE_{\tau,\sigma}[\ind_{\{\sigma_{n} \in\tau \}}],
\end{equation}
so that $\sum_{n\in\NN} K^*(n) =1$, and we denote $\nu$ a  renewal process with law $\bP_\nu$ and inter-arrival distribution $\bP_{\nu}(\nu_1=n)=K^*(n)$. In particular, $\nu$ is recurrent.
Lemma \ref{lem:Kstar} (proven in Appendix \ref{app:K*}) gives the asymptotic behavior of $K^*(n)$:
\begin{equation}
K^*(n) \stackrel{n\to\infty}{\sim} \frac{\varphi^*(n) \, n^{-(1+\alpha^*)}}{\bE_{\tau,\sigma}[|\tau\cap\sigma|]}   \qquad \text{ with } \alpha^*= \frac{1-\ga-\tilde\ga}{\tilde\ga}>0\, .
\end{equation}

We want to interpret the expansion \eqref{expand1} as the partition function of some pinning model, with underlying renewal $\nu$.
We define $z_c^{\a}:=e^{\gb_c^a} -1$, so that $z_c^{\a}=\bE_{\tau,\sigma}[|\tau\cap\sigma|]^{-1}$. Then we write $z=z_c^{\a} e^{u}$: thanks to \eqref{eq:defK*}, we have
\begin{align}\label{zcheck}
Z_{N,\gb}^{\sigma}  =: \check Z_{N,u}^{\sigma} &= \sum_{m=1}^{N} e^{um} \sum_{i_1<\cdots<i_m=N} \prod_{k=1}^m K^*(i_k-i_{k-1})   
  w(\sigma,i_{k-1},i_k) \notag\\
&=\bE_{\nu}\Big[ e^{u |\nu\cap(0,N]|} W(\nu,\sigma,[0,N]) \ind_{\{N\in\nu\}} \Big],
\end{align}
with
\begin{equation}
w(\sigma,a,b):= \frac{\bE_{\tau}[\ind_{\{\sigma_{b}-\sigma_{a} \in\tau \}}]}{\bE_{\tau,\sigma}[\ind_{\{\sigma_{b}-\sigma_{a} \in\tau \}}]} , \quad \quad \bE_{\sigma}[w(\sigma,a,b)]=1,
\end{equation}
and
\begin{equation}
\label{def:W}
W(\nu,\sigma,[a,b]):= \frac{\bE_{\tau}[\prod_{k: \nu_k\in [a,b]} \ind_{\{\sigma_{\nu_k} \in \tau \}}]}{\bE_{\tau,\sigma}[\prod_{k: \nu_k\in [a,b]} \ind_{\{\sigma_{\nu_k} \in \tau \}}]} , \qquad \bE_{\sigma}[W(\nu,\sigma,[a,b])]=1.
\end{equation}

Thus $\check Z_{N,u}^{\sigma}$ corresponds to a pinning model: an excursion $(\nu_i,\nu_{i+1}]$ of $\nu$ is weighted by $e^u w(\sigma,\nu_i,\nu_{i+1})$.
Note that, since $\bE_{\sigma}[w(\sigma,a,b)]=1$, the annealed partition function,
\[\bEs \check Z_{N,u}^{\sigma} = \bE_{\nu}\big[ e^{u |\nu\cap(0,N]|} \ind_{\{N\in\nu\}}\big] ,\]
is the partition function of a homogeneous pinning model, with an underlying recurrent renewal $\nu$: the annealed critical point is therefore $u_c^{\a}=0$.

%{\blue Hence, one has that $\bE_{\nu}[|\nu\cap [0,N]|]\sim cst. \varphi^*(N)^{-1} N^{\alpha^*}\gg \sqrt{N}$ if and only if $\ga^*>1/2$ or $\ga^*=1/2$ and $\lim_{n\to\infty} \varphi^*(n) = 0$. In Appendix \ref{app:K*}, one gets $\varphi^*(n) = \tilde \varphi(b_n)^{1/\tilde{\ga}} \varphi(b_n)^{-1}$ where $b_n$ satisfies $b_n^{-\tilde\ga} \tilde\varphi(n) \stackrel{n\to\infty}{\sim} 1/n$: $\lim_{n\to\infty}\varphi^*(n)  = 0$ if $\lim_{k\to\infty} \tilde \varphi(k)^{1/\tilde{\ga}} \varphi(k)^{-1} =0$.}

\subsection{Fractional moment method}
\label{sec:fracmoment}

We are left with studying the new representation of the partition function, $\check Z_{N,u}^{\sigma} $, and in particular, we want to show that its (quenched) critical point is positive.
For that purpose, it is enough to show that there exists some $u>0$ and some $\zeta>0$ such that
\begin{equation}
\label{fracmomentfinite}
\liminf_{n\in\NN} \bE_{\sigma} \bigg[ (\check Z_{N,u}^{\sigma})^{\zeta}\bigg]<+\infty.
\end{equation}
Indeed, using Jensen's inequality, one has that
\begin{equation}
\mF(\gb) = \lim_{N\to\infty} \frac1N \bE_{\sigma} \log Z_{N,\gb}^{\sigma} =   \lim_{N\to\infty} \frac{1}{\zeta N} \bE_{\sigma} \log (\check Z_{N,u}^{\sigma})^{\zeta}  \leq
\liminf_{N\to\infty} \frac{1}{\zeta N} \log\bE_{\sigma} \bigg[ (\check Z_{N,u}^{\sigma})^{\zeta} \bigg],
\end{equation}
so that \eqref{fracmomentfinite} implies $\mF(\gb)=0$.

\subsection{The coarse-graining procedure}
\label{sec:coarsegraining}

Let us fix the coarse-graining length $L=1/u$, and decompose the system into blocks of size $L$: $\mB_i:= \{(i-1)L+1,\ldots,iL\}, i\geq 1$.
Considering a system of length $nL$, for $n\in\NN$, we have
\begin{equation}
\check Z_{nL,u}^{\sigma} = 
\sum_{ \mI=\{1\leq i_1<\dots<i_m=n\} }
\sumtwo{ d_{i_1}\leq f_{i_1} }{ d_{i_1},f_{i_1}\in \mB_{i_1} } \cdots \sumtwo{d_{i_m}\leq f_{i_m}=nL}{d_{i_m}\in\mB_{i_m}}  \prod_{k=1}^m \check Z_{[d_{i_k},f_{i_k}],u}^{\sigma} K^*(d_{i_k}-f_{i_{k-1}}) w(\sigma,f_{i_{k-1}},d_{i_k}),
\end{equation}
where $f_0=i_0=0$ and
\begin{align*}
\check Z_{[a,b],u}^{\sigma} & := \bE_{\nu}\left[  e^{u |\nu\cap(a,b]|} W(\nu,\sigma,[a,b]) \ind_{\{b\in\nu\}}] | a\in\nu\right]\\
&\leq e \bE_{\nu}\left[  W(\nu,\sigma,[a,b]) \ind_{\{b\in\nu\}}] | a\in\nu\right] = e\check Z_{[a,b],0}^{\sigma}.
\end{align*}

Then, we denote by $\mZ_{\mI}$ the partition function with $u=0$, and where $\nu$ is restricted to visit only blocks $\mB_i$ for $i\in\mI$: if $\mI=\{1\leq i_1<\dots<i_m=n-1\}$,
\begin{equation}
\label{eq:ZI}
\mZ_{\mI}:= \sumtwo{d_{i_1}\leq f_{i_1}}{d_{i_1},f_{i_1}\in \mB_{i_1}} \cdots \sumtwo{d_{i_m}\leq f_{i_m}=nL}{d_{i_m}\in\mB_{i_m}}  \prod_{k=1}^m \check Z_{[d_{i_k},f_{i_k}],0}^{\sigma} K^*(d_{i_k}-f_{i_{k-1}}) w(\sigma,f_{i_{k-1}},d_{i_k}).
\end{equation}
For $\zeta<1$ we have
\begin{equation}
\label{eq:checkZ}
(\check Z_{nL,u}^{\sigma})^{\zeta} \leq \sum_{\mI\subset \{1,\ldots,n\}} e^{\zeta | \mI |}\, (\mZ_{\mI})^{\zeta},
\end{equation}
and we are left with estimating $\bE_{\sigma}[(\mZ_{\mI})^{\zeta}]$.
 %where $\mZ_{\mI}$ is to be interpreted as a coarse-grained partition function.

We choose $\zeta$ given by $(1+\ga^*/2)\zeta = 1+\ga^*/4$. We will show in the next two sections that for every $\gd>0$, there exists $L_0$ such that for any $L\geq L_0$
\begin{equation}
\label{coarsepartition}
\bE_{\sigma}[(\mZ_{\mI})^{\zeta}] \leq  \gd^{|\mI|}\prod_{k=1}^{|\mI|} (i_{k}-i_{k-1})^{-(1+\alpha^*/4)} \, .
\end{equation}
With \eqref{eq:checkZ}, this shows that
\begin{equation}
\bE_{\sigma}\left[ (\check Z_{nL,u}^{\sigma})^{\zeta} \right]\leq  \sum_{\mI\subset \{1,\ldots,n\},n\in\mI} \prod_{k=1}^{|\mI|} \frac{e^{\zeta} \gd}{ (i_{k}-i_{k-1})^{1+\alpha^*/4} }.
\end{equation}
We choose $\gd$ so that $\hat K(n) = e^{\zeta}\gd n^{-(1+\ga^*/4)}$ sums to 1, making it the inter-arrival probability of a recurrent renewal process $\hat \tau$. We then have $\bE_{\sigma}\left[ (\check Z_{nL,u}^{\sigma})^{\zeta} \right]\leq \bP_{\hat\tau}(n\in\hat \tau)\leq 1$, which yields \eqref{fracmomentfinite} and concludes the proof of the second part of Theorem~\ref{thm:main}.\qed

\subsection{Change of measure argument}
\label{sec:chgmeasure}

To estimate $\mZ_{\mI}$ and prove \eqref{coarsepartition}, we use a change of measure, but only on the blocks $\mB_i$, for $i\in\mI$.
The idea is to choose an event $J_L$ depending on $\{0,\sigma_1,\ldots,\sigma_L\}$ which has a small probability under $\bP_{\sigma}$, but large probability under the modified measure, see Lemma \ref{lem:eventJ} or Lemma \ref{lem:eventI}.

With the event $J_L$ to be specified, we define, for some (small) $\eta>0$,
\begin{equation}
\label{defg}
g(\sigma):= \eta \ind_{J_L} + \ind_{J_L^c}\, , \qquad
g_{\mI} := \prod_{i\in\mI} g(\sigma_{\mB_i}),
\end{equation}
where $\sigma_{\mB_i}=\{0,\sigma_{iL+1}-\sigma_{iL},\ldots,\sigma_{(i+1)L}-\sigma_{iL}\}$ is the translation of $\sigma$ by $\sigma_{iL}$.

Using H\"older's inequality, we have
\begin{equation}
\label{eq:holder}
\bE_{\sigma}[(\mZ_{\mI})^{\zeta}] = \bE_{\sigma}[g_{\mI}^{-\zeta} (g_{\mI} \mZ_{\mI})^{\zeta}] \leq \bE_{\sigma}\big[ (g_{\mI})^{-\zeta/(1-\zeta)} \big]^{1-\zeta} \bE_{\sigma} \left[ g_{\mI} \mZ_{\mI}\right]^{\zeta}. 
\end{equation}
The first term on the right in \eqref{eq:holder} is easily computed: assuming we choose $J_L$ with $\bPs(J_L)\leq \eta^{\zeta/(1-\zeta)}$, we have
\begin{align}
 \bE_{\sigma}\bigg[ (g_{\mI})^{-\zeta/(1-\zeta)} \bigg]^{1-\zeta} &=
 \bE_{\sigma}\bigg[ (g(\sigma))^{-\zeta/(1-\zeta)} \bigg]^{|I|(1-\zeta)} \notag \\
 &= \left( \eta^{-\zeta/(1-\zeta)} \bP_{\sigma}(J_L) + \bPs(J_L^c)\right)^{|I|(1-\zeta)} \notag\\
 &\leq 2^{|\mI |}\, .
\label{firstholder} 
\end{align}

We are left to estimate $\bE_{\sigma}\big[  g_{\mI} \mZ_{\mI} \big]$.
For this it is useful to control $\bEs[g(\sigma) \check Z_{[a,b]}^{\gs}]$ for $0\leq a<b\leq L$.
From the definition of $g$,
\begin{equation}
\bEs[g(\sigma) \check Z_{[a,b]}^{\gs}] \leq \eta \bEs[\check Z_{[a,b]}^{\gs}] + \bE_{\sigma} \big[ \ind_{J_L^c}\, \check Z_{[a,b]}^{\sigma}\big].
\end{equation}
Then, provided that we can show $\bE_{\sigma} \big[ \ind_{J_L^c}\, \check Z_{[a,b]}^{\sigma}\big] \leq \eta \bP_{\nu}(b-a\in\nu)$ whenever $b-a \geq \gep L$, we conclude
\begin{equation}
\label{secondholder}
\bEs[g(\sigma) \check Z_{[a,b]}^{\gs}] \leq (2\eta + \ind_{\{b-a<\gep L \}})\bP_\nu(b-a\in\nu).
\end{equation}
The following lemma fills in the missing pieces of the preceding, so we can conclude that \eqref{firstholder} and \eqref{secondholder} hold in the case $\ga^* >1/2$. The proof is in section \ref{sec:eventJ}.

\begin{lemma}
\label{lem:eventJ}
Suppose \eqref{eq:alphas} holds, with $\tga>0$ and $\ga+\tga<1$.
If $\ga^*>1/2$, then for any fixed $\eta>0$ and $\gep>0$, there exist events $J_L$ determined by $\{0,\sigma_1,\ldots,\sigma_L\}$, and $L_0>0$, such that, if $L\geq L_0$,
\begin{equation}
\label{eq:cond1}
\bPs(J_L)\leq \eta^{\zeta/(1-\zeta)}
\end{equation} 
and moreover, for $0\leq a<b\leq L$ with $b-a\geq \gep L$,
\begin{equation}
\label{eq:cond2}
\bE_{\sigma} \big[ \ind_{J_L^c}\, \check Z_{[a,b],0}^{\sigma}\big] \leq \eta\, \bP_{\nu}(b-a\in\nu)\, .
\end{equation}
Additionally, for all $L$, $\bE_{\sigma} \big[ \ind_{J_L}\, \check Z_{[a,b],0}^{\sigma}\big] \leq 
 %\bE_{\sigma} \big[\check Z_{[a,b]}^{\sigma}\big] = 
 \bP_{\nu}(b-a\in\nu)$.
\end{lemma}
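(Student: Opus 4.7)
My plan is to build $J_L$ via the standard change-of-measure scheme for fractional-moment bounds. Since $\bEs[\check Z_{[a,b],0}^\sigma] = \bPn(b-a\in\nu)$ (because $\bEs[W(\nu,\sigma,[a,b])]=1$), the inequality \eqref{eq:cond2} really says that a fraction at least $1-\eta$ of the annealed partition-function mass must come from the rare event $J_L$. The natural strategy is to identify a statistic $T_L(\sigma)$ of $\sigma|_{[0,L]}$ which is size-biased by $\check Z_{[a,b],0}^\sigma$, and take $J_L=\{T_L(\sigma)\ge t_L\}$ for an appropriate threshold $t_L$. The last unconditional bound $\bEs[\ind_{J_L}\check Z_{[a,b],0}^\sigma]\le\bPn(b-a\in\nu)$ is immediate from $\ind_{J_L}\le 1$ and $\bEs[\check Z_{[a,b],0}^\sigma]=\bPn(b-a\in\nu)$, so the work lies in establishing \eqref{eq:cond1} and \eqref{eq:cond2} simultaneously.

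To identify $T_L$, I would expand $W(\nu,\sigma,[a,b])-1$ from \eqref{def:W} to first order in the deviations of the single-gap probabilities $\bPt(\sigma_{\nu_k}-\sigma_{\nu_{k-1}}\in\tau)$ from their $\sigma$-averages $\bPst(\sigma_{\nu_k-\nu_{k-1}}\in\tau)$; this suggests taking
\[
T_L(\sigma) := \sum_{0\le i<j\le L} c_{i,j}\,\Bigl(\bPt(\sigma_j-\sigma_i\in\tau) - \bPst(\sigma_{j-i}\in\tau)\Bigr),
\]
with positive weights $c_{i,j}$ roughly proportional to $K^*(j-i)/\bPst(\sigma_{j-i}\in\tau)$, so that $T_L$ mimics the ``first chaos'' contribution to $\check Z_{[a,b],0}^\sigma-\bPn(b-a\in\nu)$. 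By construction $\bEs[T_L]=0$. For \eqref{eq:cond2} I would write $\check Z_{[a,b],0}^\sigma-\bPn(b-a\in\nu)$ as a first-order part (bounded by $T_L(\sigma)$ times the appropriate $\nu$-return factor) plus a higher-order remainder; on $J_L^c$ the first-order part is at most $t_L$ times that factor, while the remainder is controlled by Cauchy--Schwarz using the second moment $\bEs[(\check Z_{[a,b],0}^\sigma)^2]$.

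The technical heart is the second-moment computation, and this is where $\ga^*>1/2$ enters. Using independence of $\sigma$-increments, both $\Var_\sigma(T_L)$ and $\bEs[(\check Z_{[a,b],0}^\sigma)^2]$ reduce to sums of the schematic form $\sum_{n\le L}K^*(n)^2/\bPst(\sigma_n\in\tau)$ together with their two-parameter analogues, governed by the asymptotic $\bPst(\sigma_n\in\tau)\sim c\,\gp^*(n)\,n^{-(1+\ga^*)}$ from Lemma \ref{lem:Kstar}. When $\ga^*>1/2$ these sums are of order one (or at worst slowly varying in $L$), so one can pick $t_L\to 0$ for which Chebyshev yields $\bPs(T_L\ge t_L)\le\eta^{\zeta/(1-\zeta)}$, giving \eqref{eq:cond1}, while simultaneously forcing the first-order term in $\check Z_{[a,b],0}^\sigma-\bPn(b-a\in\nu)$ to be at most $\eta\,\bPn(b-a\in\nu)$ on $J_L^c$, giving \eqref{eq:cond2}. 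Uniformity in $(a,b)$ with $b-a\ge\gep L$ is not an additional obstacle because $T_L$ is (approximately) translation invariant in $[0,L]$, so a single event controls all such subintervals up to constants absorbed into $\eta$.

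The main obstacle is precisely this second-moment computation at the borderline $\ga^*=1/2$: the sum $\sum_n K^*(n)^2/\bPst(\sigma_n\in\tau)$ becomes marginal and $\Var_\sigma(T_L)$ attains the same order as the square of the first-order scale, so the Chebyshev thresholding argument collapses. This matches the authors' comment that $\ga^*\in(0,1/2]$ requires a genuinely different approach, which is handled in Theorem \ref{thm:exist} through explicit constructions of $\tau,\sigma$ rather than a universal change of measure.
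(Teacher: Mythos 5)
Your proposal takes a genuinely different route from the paper: a first-chaos / polynomial-chaos change of measure, with $J_L$ defined by thresholding a linear functional $T_L(\sigma)$ of the disorder. The paper instead builds $J_L$ from a large-gap counting statistic: it partitions $[0,L]$ into $\gamma_L$ sub-blocks, lets $S_{L,j}$ be the largest $\sigma$-gap in sub-block $j$, counts $Y_L=\sum_j \ind_{\{S_{L,j}>r_L B_{k_L}\}}$, and takes $J_L=\{Y_L-\bEs[Y_L]\leq -y_L/2\}$. The whole game is then to show that conditioning on $A_L(\nu)$ (i.e.\ $\tau$ hitting the $\sigma_{\nu_k}$) suppresses $\bEs[Y_L]$ by an amount $y_L\sim D_L\,\bPs(S_{L,1}>r_LB_{k_L})$ controlled by the number $|\mC(\nu)|\gtrsim D_L$ of ``capped'' sub-blocks (those surrounded by $\nu$-visits), while $\Var(Y_L)$ and $\Var(Y_L\mid A_L(\nu))$ stay of order $\gamma_L\,\bPs(S_{L,1}>r_LB_{k_L})$. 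The condition $\ga^*>1/2$ enters exactly as the signal-to-noise requirement $D_L^2/\gamma_L\to\infty$, i.e.\ $\gamma_L^{2\ga^*-1}\to\infty$ (Step 6 of the paper's proof).

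There is a concrete gap in your sketch: the heuristic you offer for where $\ga^*>1/2$ enters is wrong. You claim the bottleneck is the sum $\sum_{n\leq L}K^*(n)^2/\bPst(\sigma_n\in\tau)$. But by \eqref{eq:defK*}, $K^*(n)=\bPst(\sigma_n\in\tau)/\bEst[|\tau\cap\sigma|]$, so
\[
\sum_{n\geq 1}\frac{K^*(n)^2}{\bPst(\sigma_n\in\tau)}=\frac{1}{\bEst[|\tau\cap\sigma|]^2}\sum_{n\geq 1}\bPst(\sigma_n\in\tau)=\frac{1}{\bEst[|\tau\cap\sigma|]}\, ,
\]
which is a finite constant whenever $\tau\cap\sigma$ is transient, i.e.\ for every $\ga^*>0$. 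This expression exhibits no transition at $\ga^*=1/2$ and cannot be the source of the restriction. Moreover, the Cauchy--Schwarz control of the ``higher-order remainder'' that your sketch invokes would require an $L^2$ bound on $\check Z_{[a,b],0}^\sigma$; the relevant two-replica quantity is governed by the overlap renewal of two independent copies of $\nu$, whose return probability $\bP_\nu(n\in\nu)^2\sim c\, n^{-2(1-\ga^*)}$ is summable precisely when $\ga^*<1/2$. So the second moment is well-behaved in exactly the \emph{opposite} regime to the one you need, which is why the authors abandon a chaos-expansion change of measure and work instead with the explicit gap-count statistic, where the effect of $A_L(\nu)$ on $\sigma$'s gap distribution can be quantified directly (Steps 2--3, stochastic domination and the capped-block reduction of order $r_L^{-(1-\ga)/2}$). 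Your argument, as written, does not identify the correct obstruction and the key estimate (\eqref{eq:cond2} uniformly in $\nu\in\mG$) is not carried out.
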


In Section \ref{sec:example}, we show that without the restriction $\ga^*>1/2$, for arbitrary $\ga,\tga$ with $\ga+\tga <1$, there exist distributions of $\tau$ and $\sigma$ with these tail exponents, and suitable events $I_L$ with the same properties as $J_L$, see Lemma \ref{lem:eventI}. The same conclusion \eqref{secondholder} follows similarly.

\medskip

To bound $\bE_{\sigma}\big[  g_{\mI} \mZ_{\mI} \big]$ we need the following extension of Lemma \ref{lem:eventJ}, which concerns a single block, to cover all blocks.  The proof is in section \ref{sec:eventJ}.

\begin{lemma}
\label{keylemma}
Suppose \eqref{eq:alphas} holds, with $\tga>0$ and $\ga+\tga<1$. If $\ga^*>1/2$ and $\zeta<1$, $\eta>0$, $\gep>0$ are fixed, then there exist an event $J_L$ depending on $\{0,\sigma_1,\ldots,\sigma_L\}$, and $L_0>0$, such that if $L \geq L_0$ then $\bPs(J_L)\leq \eta^{\zeta/(1-\zeta)}$, and for every $\mI=\{1\leq i_1<\dots<i_m=n\}$ one has
\begin{equation}\label{gIZI}
\bE_{\sigma}\Big[ g_{\mI} \mZ_{\mI}\Big]  
\leq \sumtwo{d_{i_1}\leq f_{i_1}}{d_{i_1},f_{i_1}\in \mB_{i_1}} \cdots \sumtwo{d_{i_m}\leq f_{i_m}=nL}{d_{i_m}\in\mB_{i_m}}  \prod_{k=1}^m K^*(d_{i_k}-f_{i_{k-1}}) (2\eta + \ind_{\{f_{i_k}-d_{i_k}<\gep L\}}) \bP_{\nu}(f_{i_k} -d_{i_k}\in\nu).
\end{equation}
\end{lemma}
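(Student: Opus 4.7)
The goal is to extend the single-block bound implicit in Lemma \ref{lem:eventJ} to the multi-block product defining $\mZ_\mI$. The key structural observation is that in the expansion of $\mZ_\mI$ from \eqref{eq:ZI}, the factors $w_1=w(\sigma,0,d_{i_1})$, $\check Z_1=\check Z_{[d_{i_1},f_{i_1}],0}^{\sigma}$, $w_2,\check Z_2,\dots$ depend on \emph{disjoint} (and in fact interleaved) index ranges of $\sigma$-increments, so as functions of $\sigma$ they are mutually independent. The only obstruction to full factorization across blocks is the change-of-measure factor $g(\sigma_{\mB_{i_k}})$, which depends on the whole of $\mB_{i_k}$ and therefore couples $\check Z_k$ with the left tail of $w_k$ (inside $((i_k-1)L,d_{i_k}]$) and with the right tail of $w_{k+1}$ (inside $(f_{i_k},i_kL]$).

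My plan is first to fix a configuration $(d_{i_k},f_{i_k})_{k=1}^m$ and to decompose each cross-block sum as $\sigma_{d_{i_k}}-\sigma_{f_{i_{k-1}}}=X_k+Y_k+Z_k$, where $X_k$ is the right-tail contribution from $\mB_{i_{k-1}}$, $Y_k$ is the middle contribution from the non-$\mI$ blocks, and $Z_k$ is the left-tail contribution from $\mB_{i_k}$. I would then integrate $Y_k$ out first, using the martingale identity $\bE_\sigma[w(\sigma,a,b)\mid \sigma_1,\dots,\sigma_a]=1$ (immediate from the normalization of $w$ in \eqref{def:W}), which reduces the expectation, conditional on the block increments, to a quantity depending only on data inside the blocks $\{\mB_{i_k}\}_{k\in\mI}$. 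Second, I would exploit the independence of $\sigma$-increments across disjoint blocks to factor the surviving expectation into a product of single-block expectations. Third, for each block I would apply Lemma \ref{lem:eventJ}: writing $g=\eta\ind_{J_L}+\ind_{J_L^c}$ and combining the two estimates of that lemma gives
\[
\bE_\sigma\!\left[g(\sigma_{\mB_{i_k}})\,\check Z_{[d_{i_k},f_{i_k}],0}^{\sigma}\right]\leq \bigl(2\eta+\ind_{\{f_{i_k}-d_{i_k}<\gep L\}}\bigr)\,\bP_\nu(f_{i_k}-d_{i_k}\in\nu),
\]
with the trivial bound $g\leq 1$ handling the short-block case $f_{i_k}-d_{i_k}<\gep L$. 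Taking the product over $k$ and summing over the configurations $(d_{i_k},f_{i_k})$ yields precisely \eqref{gIZI}.

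The main obstacle I expect is the first step, since the integrated-out weight $\bE_{Y_k}[w(X_k+Y_k+Z_k)]$ depends jointly on the block-tail sums $X_k$ (from $\mB_{i_{k-1}}$) and $Z_k$ (from $\mB_{i_k}$), so even after averaging out the middle there remains a cross-block coupling between each pair of consecutive blocks in $\mI$. The way to dispose of this is to observe that this conditional weight has total $\sigma$-expectation equal to $1$, and to transfer it onto a single block (either the left or the right) at the cost of a multiplicative constant that can be absorbed into the prefactor $2\eta$ of the single-block bound. Implementing this cleanly along the whole chain without accumulating errors across the $m$ blocks is the delicate bookkeeping, and the specific construction of $J_L$ in Lemma \ref{lem:eventJ}, together with the hypothesis $\alpha^*>1/2$ entering through the control of $\bP_\nu$ given in Lemma \ref{lem:Kstar}, is what makes the tails of $X_k,Z_k$ tame enough for this absorption to succeed.
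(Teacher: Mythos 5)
Your proposal correctly identifies the structural difficulty (the cross-block coupling induced by $g(\sigma_{\mB_{i_k}})$ touching the left tail of $w_k$ and the right tail of $w_{k+1}$), but the step you offer to dispose of it does not work, and it is exactly where the paper does something different. You suggest integrating out the middle increments $Y_k$ and then ``transferring'' the surviving weight $\tilde w_k := \bE_{Y_k}[w(X_k+Y_k+Z_k)]$ onto a single block at the cost of a multiplicative constant absorbable into $2\eta$. The quantity $\tilde w_k$ has $\bEs$-mean $1$ but is an unbounded random variable in $(X_k,Z_k)$, and it sits on the bond between consecutive blocks, so the product $\prod_k g(\sigma_{\mB_{i_k}})\check Z_k \tilde w_k$ has a genuine chain dependency. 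There is no a priori uniform bound that turns a random mean-$1$ weight into a deterministic constant, and a Cauchy--Schwarz splitting along the chain would introduce uncontrolled $L^2$ norms of the $\tilde w_k$. So the gap is real: the ``absorption'' claim is asserted, not established, and nothing in Lemmas~\ref{lem:eventJ} or~\ref{lem:Kstar} provides the required uniform control.

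The paper's proof avoids this by never trying to factorize $\bEs[g_\mI\mZ_\mI]$ as a product of single-block $\sigma$-expectations with residual weights. Instead it pulls the deterministic factors $K^*(d_{i_k}-f_{i_{k-1}})\,\bP_\nu(f_{i_k}-d_{i_k}\in\nu)$ out front (these are exactly the denominators making the $w$ and $W$ weights mean $1$), and recognizes that the remaining $\sigma$-integral of $g_\mI$ against the product of all $w$'s and $W$'s is precisely the conditional probability $\bEst\big[\prod_{k}(\eta+\ind_{\{\sigma_{\mB_{i_k}}\in J_L^c\}})\,\big|\,A(\nu)\big]$, averaged over $\nu$ in the configuration event $\mE_{(d_{i_k},f_{i_k})}$; this is the multi-block analogue of \eqref{lemma32:1}. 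The cross-block coupling is then handled not by bounding a random weight but by the stochastic-domination statement from Step~2 of the proof of Lemma~\ref{lem:eventJ}, which (as the paper stresses, via exchangeability) holds for any given increment conditionally on arbitrary information about the others; this is what lets the $(\eta/2)^{|K_3|}(\eta/2)^{|K_4|}$ bounds factorize over $k$ after expanding via \eqref{indicators}. That reconditioning is the missing idea in your argument, and I don't see a way to complete the ``transfer onto one block'' route without reinventing it.
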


For fixed $\gd>0$, we claim that, by taking $\gep,\eta$ small enough in Lemma \ref{keylemma}, 
if $L$ is large enough, 
\begin{equation}
\label{eq:endchangemeasure}
\bE_{\sigma}\big[  g_{\mI} \mZ_{\mI}\big] 
\leq  \prod_{k=1}^{|\mI|} \frac{\gd}{(i_k-i_{k-1})^{1+\ga^*/2}} \, .
\end{equation}
This, together with \eqref{eq:holder} and \eqref{firstholder}, enables us to conclude that \eqref{coarsepartition} holds, with 
%$C_L=(c_L)^{\zeta}$, and 
$\gd$ replaced by $\gd'=2 \gd^{2\zeta}$, since $\zeta(1+\ga^*/2) = 1+\ga^*/4$.

\smallskip

\begin{proof}[Proof of \eqref{eq:endchangemeasure}]
%Let us now show \eqref{eq:endchangemeasure}.
For every $\mI=\{1\leq i_1<i_2<\cdots <i_{m}=n\}$ we have by Lemma \ref{keylemma}
\begin{equation}
\bE_{\sigma}\Big[ g_{\mI} \mZ_{\mI}\Big]  
\leq \bE_{\nu} \left[ \ind_{E_{\mI}}   \prod_{k=1}^m  (2\eta + \ind_{\{F_{i_k}-D_{i_k}<\gep L\}}) \right] \, ,
\end{equation}
where we set
\[E_{\mI}:=\Big\{\nu:  \{i: \nu\cap \mB_i \neq\emptyset\} = \mI \Big\}, \quad D_{i} := \min \{\nu \cap \mB_{i}\},\ \ F_{i}:= \max \{ \nu \cap\mB_i\} \, .\]

We next show that given $\gd>0$, for $\gep,\eta$ sufficiently small, for any given $\mI$ and $1\leq k\leq m$
\begin{multline}
\label{blockbyblock}
\sup_{f_{i_{k-1}}\in\mB_{i_{k-1}}} \bE_{\nu} \left[   (2\eta + \ind_{\{F_{i_k}-D_{i_k}<\gep L\}}) \ind_{\{\nu \cap\mB_i =\emptyset \ \forall\, i_{k-1}<i<i_{k},\,\nu \cap \mB_{i_{k}} \neq\emptyset\}} \, \big|\,  F_{i_{k-1}} = f_{i_{k-1}}\right] \\
\leq \gd (i_{k}-i_{k-1})^{-(1+\ga^*/2)}\, ,
\end{multline}
where we used the convention that $i_0=0,f_0=0$ and $\mB_{i_0}=\{0\}$.
Then, we easily get  \eqref{eq:endchangemeasure} by iteration.

\smallskip

First, we see that for every $f_{i_{k-1}}\in \mB_{i_{k-1}}$, we have
\begin{equation} \label{firstbound}
\bE_{\nu} \left[ \ind_{\{\nu \cap\mB_i =\emptyset \ \forall\, i_{k-1}<i<i_{k},\,\nu \cap \mB_{i_{k}} \neq\emptyset\}} \, \big|\, F_{i_{k-1}} = f_{i_{k-1}} \right] = \bP_{\nu} \big( f_{i_{k-1}} + \nu_1 \in \mB_{i_k} \mid \nu_1\geq i_{k-1}L-f_{i_{k-1}} \big).
\end{equation}
If $i_{k}-i_{k-1}=1$, we bound this by $1$.
If $i_k-i_{k-1}\geq 2$, writing for simplicity $j_k:= i_k-i_{k-1} -1 \geq 1$, the right side of \eqref{firstbound} is at most
\begin{align}
\sup_{0\leq m\leq L}\bP_{\nu} & \big( \nu_1 \in (m+j_k L, m+(j_k+1)L] \,  | \,  \nu_1>m\big)
\leq \frac{L}{\bP_\nu(\nu_1\geq L)} \sup_{ j_k L \leq x \leq (j_k+2) L} \bP_\nu(\nu_1=x) \notag\\
 & \leq c_1L \gp^*(L)^{-1} L^{\ga^*} \gp^*(j_k L) (j_k L)^{-(1+\ga^*)} \leq c_2 j_k^{-(1+\ga^*/2)}\, ,
 \label{longjump}
\end{align}
where we used the existence of a constant $c_3$ such that, for all $a\geq 1$ and $L$ large, $\gp^*(aL) / \gp^*(L) \leq c_3 a^{\ga^*/2}$.
In the end, we have 
\begin{equation}
\label{bbb1}
2\eta \bE_{\nu} \left[ \ind_{\{\tau\cap\mB_i =\emptyset \ \forall\, i_{k-1}<i<i_{k},\,\tau\cap \mB_{i_{k}} \neq\emptyset\}} \, \big|\,  F_{i_{k-1}} = f_{i_{k-1}}\right] \leq c_4 \eta  (i_{k}-i_{k-1})^{-(1+\ga^*/2)} \, .
\end{equation}

\medskip
It remains to bound the rest of \eqref{blockbyblock}. We decompose the expectation according to whether the interval $[D_{i_k},F_{i_k}]$ is far from the upper end of the block $\mathcal{B}_{i_k}$, that is,
$D_{i_k}\in ((i_k-1) L , (i_k-\gamma)L]$ or not, where $\gamma := \gep^{(\ga^*\wedge 1) /(1+\ga^*)} \gg\gep $.

Note that if $D_{i_k} \leq (i_k-\gamma)L $ and $F_{i_k} -D_{i_k} \leq \gep L$ then $i_kL - F_{i_k} \geq \frac12 \gamma L$.
We therefore have
\begin{align*}
\bE_{\nu} &\left[ \ind_{\{D_{i_k}\in ((i_k-1) L , (i_k-\gamma)L]\}} \ind_{\{F_{i_k}-D_{i_k}<\gep L\}}\ind_{\{\tau\cap\mB_i =\emptyset \ \forall\, i_{k-1}<i<i_{k},\,\tau\cap \mB_{i_{k}} \neq\emptyset\}} \, \big|\,  F_{i_{k-1}} = f_{i_{k-1}}\right] \\
&\leq \sup_{0\leq m\leq L}\bP_{\nu}  \big( \nu_1 \in [m+j_k L, m+(j_k+1-\gamma)L] \,  | \,  \nu_1>m\big) \times \sum_{\ell=0}^{\gep L} \bP_{\nu}(\ell\in\nu) \bP_{\nu}(\nu_1 \geq  \tfrac12 \gamma L)\, .
\end{align*}
Then, using \eqref{longjump} if $i_k-i_{k-1}\geq 2$, or bounding the probability by $1$ if $i_k-i_{k-1}=1$, we bound this from above for large $L$ by
\begin{equation}\label{smallgap}
c_2 (i_{k}- i_{k-1})^{-(1+\ga^*/2)}\,  \frac{\gp^*(\gamma L)}{ (\gamma L)^{\ga^*}}\sum_{\ell=0}^{\gep L} \bP_{\nu}(\ell\in\nu)
\leq c_5 \gep^{\ga^* \wedge 1} \gamma^{-\ga^*} \,  (i_{k}- i_{k-1})^{-(1+\ga^*/2)}\, .
\end{equation}
For the last inequality, we used \eqref{Doney} to get that $\bP_{\nu}(\ell \in\nu) \leq c_6 \min\{ 1, \gp^*(\ell)^{-1} \ell^{\ga^*-1}  \}$ for all $\ell\geq 1$.
%(when $\ga^*=1$, we also use that $\bE_{\nu}[\nu_1 \ind_{\{\nu_1\leq \ell\}}] \geq c \gp^*(n)$ for some constant $c$).

We now deal with the term when $D_{i_k} \in ((i_k-\gamma)L, i_kL]$. We have 
\begin{multline}
\bE_{\nu} \left[ \ind_{\{D_{i_k}\in ((i_k-\gamma)L, (i_k+1)L]\}}\ind_{\{\tau\cap\mB_i =\emptyset \ \forall\, i_{k-1}<i<i_{k},\,\tau\cap \mB_{i_{k}} \neq\emptyset\}} \, \big|\,  F_{i_{k-1}} = f_{i_{k-1}}\right] \\
\leq \sup_{0\leq m\leq L}\bP_{\nu}  \big( \nu_1 \in (m+(j_k+1-\gamma) L, m+(j_k+1)L] \,  | \,  \nu_1>m\big) \, .
\end{multline}
We can now repeat the argument of \eqref{blockbyblock}.
If $i_k-i_{k-1}=1$, we get
\begin{equation} \label{smallgapA}
\sup_{0\leq m\leq L}\bP_{\nu}  \big( \nu_1 \in (m+(1-\gamma) L, m+L] \,  | \,  \nu_1>m\big)
\leq \frac{\gamma L}{\bP(\nu_1\geq L)} \sup_{ (1-\gamma) L \leq x \leq 2 L} \bP_\nu(\nu_1=x) \leq c_7 \gamma \, .
\end{equation}
 If $i_{k}-i_{k-1}\geq 2$, we end up similarly with the bound $c_8 \gamma (i_k-i_{k-1})^{-(1+\ga^*/2)}$.

Combining the bounds \eqref{smallgap} and \eqref{smallgapA}, one gets that 
\begin{equation}
\label{bbb2}
\bE_{\nu} \left[ \ind_{\{F_{i_k}-D_{i_k}<\gep L\}} \ind_{\{\tau\cap\mB_i =\emptyset \ \forall\, i_{k-1}<i<i_{k},\,\tau\cap \mB_{i_{k}} \neq\emptyset\}} \, \big|\,  F_{i_{k-1}} = f_{i_{k-1}}\right] \leq c_9 \gamma (i_{k}-i_{k-1})^{-(1+\ga^*/2)} \, ,
\end{equation}
where we used the fact that $\gep^{\ga^* \wedge 1} \gamma^{-\ga^*} =\gamma$.

Combining \eqref{bbb1} and \eqref{bbb2}, we obtain \eqref{blockbyblock} with $\gd=c_4 \eta+ c_9 \gamma$, completing the proof of \eqref{eq:endchangemeasure} and thus of \eqref{coarsepartition}.

\end{proof}

\subsection{Proof of Lemma \ref{lem:eventJ}: choice of the change of measure, }
\label{sec:eventJ}

We rewrite the partition function in \eqref{eq:cond2} as follows:
\begin{align}
\label{lemma32:1}
\frac{1}{\bP_{\nu}(b-a\in\nu)} \bE_{\sigma} \big[ \ind_{J_L^c} \check Z_{[a,b],0}^{\sigma}\big]
  &= \bE_{\nu}\bE_{\sigma}\big[ \ind_{J_L^c}\,   W(\sigma,\nu,[a,b])  \big| \, a,b\in\nu\big] \notag\\
&= \bE_{\nu}\bE_{\sigma,\tau} \bigg[ \ind_{J_L^c} \, \frac{\prod_{k: \nu_k\in [a,b]} \ind_{\{\sigma_{\nu_k} \in \tau \}}}{\bE_{\tau,\sigma}[\prod_{k: \nu_k  
  \in [a,b]} \ind_{\{\sigma_{\nu_k} \in \tau \}}]} \ \bigg| \  a,b\in\nu \bigg]\notag\\
&= \bE_{\nu}\Big[ \bP_{\sigma,\tau} \big( J_L^c \ |\  A_L(\nu)\big)  \ \Big| \ \nu_0=a\, ;\, b\in\nu \, ;\, \nu \cap (b,L]=\emptyset \Big],
\end{align}
where, for any renewal trajectory $\nu$, we defined the event
\begin{equation}
\label{defAL}
 A_L(\nu) = \left\{ (\sigma,\tau): \sigma_{\nu_k} \in \tau \text{ for all } k \text{ such that } \nu_k \in  [0,L] \right\}.
\end{equation}

Observe that if we find a set $\mG$ of (good) trajectories of $\nu$ satisfying 
\[
  \bP_{\nu} \Big( \mG \, \big| \, \nu_0=a\, ;\, b\in\nu \, ;\, \nu \cap (b,L]=\emptyset \Big)\geq 1-\eta/2
  \]
whenever $L\geq L_0$ and $b-a\geq \gep L$, then
\[\bE_{\nu}\Big[ \ind_{\mG^c}\bP_{\sigma,\tau} \big( J_L^c \ |\  A_L(\nu)\big)  \ \Big| \ \nu_0=a\, ;\, b\in\nu \, ;\, \nu \cap (b,L]=\emptyset \Big] \leq \eta/2.\]
To get \eqref{eq:cond2}, it is therefore enough to show that provided $L$ is large and $b-a \geq \gep L$, we have that for all $\nu\in\mG$
\begin{equation}\label{JLcbound}
\bP_{\sigma,\tau} \big( J_L^c \ |\  A_L(\nu)\big)  \leq \eta/2.
\end{equation}

\medskip

We now construct the event $J_L$ and the set $\mG$ (see \eqref{def:JL} and \eqref{def:G}), and  show \eqref{eq:cond1}-\eqref{eq:cond2}.
Write $L=\gamma_L k_L$ and decompose $[0,L]$ into $\gamma_L$ subblocks of length $k_L$.  %Here $\gamma_L$, to be specified, will depend on $\eta$.
We take
\begin{equation}\label{kLtype}
  k_L \equiv 1\quad  \text{if } \bE_\nu[\nu_1]<\infty, \qquad k_L,\gamma_L \to\infty\quad  \text{to be specified, if } \bE_\nu[\nu_1]=\infty.
\end{equation}
Define
\begin{equation}
 S_{L,j} = \max_{(j-1)k_L<i\leq j k_L}\{\sigma_i - \sigma_{i-1}\}.
\end{equation}
It is standard that $S_{L,1}/B_{k_L}$ has a nontrivial limiting distribution if $k_L\to\infty$ (see Section \ref{sec:notations}), and since $0<\tga<1$ we also have $b_n=B_n$. Define 
\[ Y_L = \big| \{j\leq \gamma_L : S_{L,j} > r_L\,  B_{k_L}\} \big| = \sum_{j=1}^{\gamma_L} \ind_{\{S_{L,j} > r_L \,B_{k_L}\}}, \]
where $r_L\stackrel{L\to\infty}{\to}\infty$ slowly, to be specified.
 Here our use of block size $k_L\gg 1$ when $\bE_\nu[\nu_1]=\infty$ is purely a technical device to deal with the fact we only know $n\mapsto \bP(n\in\tau)$ is approximately monotone, not exactly; the spirit of our argument is captured in the $k_L\equiv 1$ case.  See Remark \ref{rem:monotone} for further comments.

The heuristic is that large gaps $\sigma_i-\sigma_{i-1}$ lower the probability of $A_L(\nu)$, since they make $\tau$ less likely to ```find'' the locations $\sigma_{\nu_k}$; equivalently, $A_L(\nu)$ reduces the occurrence of large gaps, and in particular reduces the probability that $S_{L,j}$ is a large multiple of its typical size $B_{k_L}$.  More precisely,
our goal is to show that the conditioning by $A_L(\nu)$ (for $\nu\in\mG$ with appropriate $\mG$) induces a reduction in $\bEs[Y_L]$ of a size $y_L$ 
which is much larger than $\sqrt{\Var(Y_L)}$ and $\sqrt{\Var(Y_L | A_L(\nu))}$; then, one can take $J_L$ of the form $\big\{ Y_L - \bEs[Y_L] \leq -y_L/2 \big\}$, and obtain $\bPs(J_L)\to 0$, and $\bPst(J_L|A_L(\nu)) \to 1$ as $L\to\infty$.

\medskip
{\bf Step 1.} We first estimate $\bE[Y_L]$ and $\Var(Y_L)$, without the conditioning on $A_L(\nu)$. Suppose first that $k_L\to\infty$. Since $\bPs(\sigma_1 > n) \stackrel{n\to\infty}{\sim} \frac{1}{\tga} \gp(n) n^{-\tga}$, one easily has that, for fixed $r>0$, as $L\to\infty$,
\begin{align}\label{Sjbound}
 \bPs(S_{L,1}> r\, B_{k_L}) & = 1-\Big( 1-\bPs(\sigma_1 > r\, B_{k_L}) \Big)^{k_L} 
   \sim 1 - \exp\left( -k_L\bPs\big(\sigma_1 > r B_{k_L} \big) \right) \notag\\
 &\sim  1 - \exp\left( -\frac{1}{\tga} r^{-\tga} k_L \tilde \gp( B_{k_L}) B_{k_L}^{-\tga} \right) \sim  1 - \exp\left( -\frac{1}{\tga} r^{-\tga} \right),
\end{align}
where we used \eqref{defanbn} for the last equivalence.
 Therefore when $r_L\stackrel{L\to\infty}{\to}\infty$ slowly enough we have
\begin{equation} \label{Sjbound2}
  \bPs(S_{L,1} > r_L B_{k_L} ) \stackrel{L\to\infty}{\sim} k_L\bPs(\sigma_1 > r_L\, B_{k_L} ) \stackrel{L\to\infty}{\sim} \frac{1}{\tga} r_L^{-\tga},
\end{equation}
and hence
\begin{equation}
\label{eq:EandVarYL}
\begin{split}
\bE[Y_L]&= \gamma_L \bP(S_{L,1} > r_L\, B_{k_L}) \stackrel{L\to\infty}{\sim}  \frac{1}{\tga} \gamma_L r_L^{-\tga}\, ,  \\
 \Var(Y_L)&=\gamma_L \bP(S_{L,1} > r_L\, B_{k_L}) \big(1-\bP(S_{L,1} > r_L\, B_{k_L})  \big) \stackrel{L\to\infty}{\sim} \frac{1}{\tga} \gamma_L r_L^{-\tga}\, . 
\end{split}
\end{equation}

In the alternative case $k_L\equiv 1$, we have $B_{k_L}\equiv d_\sigma$ (see \eqref{ddef}, \eqref{defAnBn}) so whenever $r_L\to\infty$ we have
\[
  \bPs(S_{L,1}> r_L\, B_{k_L}) = \bPs(\sigma_1>d_\sigma r_L) \stackrel{L\to\infty}{\sim} \frac{1}{\tga d_\sigma^{\tga}} r_L^{-\tga}\tilde\gp(r_L),
\]
so similarly to \eqref{eq:EandVarYL},
\begin{equation}\label{eq:EandVarYL2}
  \bE[Y_L] \stackrel{L\to\infty}{\sim} \Var(Y_L) \stackrel{L\to\infty}{\sim} \frac{1}{\tga d_\sigma^{\tga}} r_L^{-\tga}\tilde\gp(r_L)L.
\end{equation}

\medskip

{\bf Step 2.} We now study the influence of  the conditioning by $A_L(\nu)$ on the events $\{\sigma_i - \sigma_{i-1} > r\, B_{k_L}\}$ and $\{S_{L,j} > r_L\, B_{k_L}\}$.  As we have noted, heuristically one expects the probabilities to decrease, and this is readily shown to be true if $\bPt(n\in\tau)$ is decreasing in $n$, but in general such monotonicity only holds asymptotically.  So instead 
we show that, for $L$ large enough,
\begin{equation}
\label{epeffect}
  \bPst\left( S_{L,j} > r_L \, B_{k_L}\ \big|\ A_L(\nu), \{S_{L,\ell}, \ell \neq j\} \right) \leq (1+2\tilde\epsilon_L)\, \bPs(S_{L,j} > r_L  B_{k_L}).
\end{equation}
where $\tilde\epsilon_L \to 0$ is defined below, see \eqref{defepsilonk}.

\smallskip
To prove this, we first show that for $n\geq j \geq 1$ and $t\geq 0$, $\bPst(\sigma_n\in \tau \mid \sigma_j - \sigma_{j-1} >  r_L \, B_{k_L}\, ,\, \sigma_{j-1} = t)$ is not much more than $\bPst(\sigma_n\in\tau \mid \sigma_{j-1} = t)$.
To this end, we can form a coupling $(\rho,\rho')$ such that $\rho$ has the distribution $\bPs(\sigma_1 \in \cdot)$, $\rho'$ has the distribution $\bPs(\sigma_1\in \cdot \mid \sigma_1 \geq r_L \, B_{k_L})$, and  $\rho\leq\rho'$. Then, thanks to the exangeability of the $(\sigma_i-\sigma_{i-1})_{i\geq 1}$, we have for any $n\geq j \geq 1$ and $t\geq 0$,
\begin{equation}\label{rhoprime}
  \bPst(\sigma_n\in \tau \mid \sigma_j - \sigma_{j-1} > r_L\, B_{k_L}, \sigma_{j-1} = t)
    = \bP_{\sigma,\tau,\rho'}(t + \rho' + \sigma_{n-j} \in \tau),
\end{equation}
and 
\begin{equation}\label{rho}
  \bPst(\sigma_n\in\tau \mid \sigma_{j-1} = t) = \bP_{\sigma,\tau,\rho}(t + \rho + \sigma_{n-j} \in \tau).
\end{equation}
To bound \eqref{rhoprime} relative to \eqref{rho}, we proceed as follows.
Let $R\geq 1$ and observe that
\begin{align} \label{Rlowerbd}
  \bP_{\sigma,\tau,\rho}\big(t + \rho\ +\ &\sigma_{n-j} \in \tau \mid t + \sigma_{n-j} \leq R \big) \notag\\
  &\geq  \bP_\rho(R \leq \rho < 2R) \ \bP_{\sigma,\tau,\rho}\big(t + \rho + \sigma_{n-j} \in \tau \mid R \leq \rho < 2R, t + \sigma_{n-j} \leq R \big) \notag\\
  &\geq \bP_\rho(R \leq \rho < 2R) \times \min_{R\leq m < 3R} \bPt(m\in\tau) \, ,
\end{align}
which depends only on $R$.
On the other hand, since $\rho'\geq r_L\, B_{k_L}$,
\begin{multline}
\label{belowR}
\bP_{\sigma,\tau,\rho'}(t + \rho' + \sigma_{n-j} \in \tau \mid t + \sigma_{n-j} \leq R) \leq \max_{m\geq r_L \, B_{k_L}} \bPt(m\in\tau) \\
\leq   \bP_{\sigma,\tau,\rho}\big(t + \rho + \sigma_{n-j} \in \tau \mid t + \sigma_{n-j} \leq R \big),
\end{multline}
where by \eqref{Rlowerbd} the last inequality holds for any fixed sufficiently large $R$, provided that $L$ is large enough.

Now from \eqref{Doney}, since $\ga<1$, if $R$ and $L$ are large enough, we have that
\[
  \bPt(m\in\tau) \geq \bPt(n\in\tau) \quad \text{for all } R \leq m \leq \tfrac{1}{2} r_L\, B_{k_L} \text{ and } n \geq r_L\, B_{k_L},
\]
which yields
\begin{align}\label{aboveR}
  \bP_{\sigma,\tau,\rho}&\left(t + \rho + \sigma_{n-j} \in \tau\ \Big|\ t + \sigma_{n-j} > R, t + \rho + \sigma_{n-j} \leq \tfrac12 r_L\, B_{k_L}\right) \notag\\
  &\geq \bP_{\sigma,\tau,\rho'}\left(t + \rho' + \sigma_{n-j} \in \tau\ \Big|\ t + \sigma_{n-j} > R, t + \rho + \sigma_{n-j} \leq \tfrac12 r_L\, B_{k_L}\right).
\end{align}
To control the contribution when $t + \rho + \sigma_{n-j} \geq \tfrac12  r_L\, B_{k_L}$, we define the quantity
\begin{equation}
\label{defepsilonk}
  \epsilon_k = \sup\left\{ \frac{ \bPt(u\in\tau) }{ \bPt(v\in\tau) }-1: u\geq v \geq k \right\},
\end{equation}
which satisfies $\lim_{k\to\infty} \epsilon_k = 0$. Then, writing $\tilde\epsilon_L$ for $\epsilon_{  r_L\, B_{k_L}/2}$, we have
\begin{align}\label{aboveR2}
  (1+\tilde\epsilon_L)&\bP_{\sigma,\tau,\rho}\left(t + \rho + \sigma_{n-j} \in \tau\ \Big|\ t + \sigma_{n-j} > R, 
    t + \rho + \sigma_{n-j} > \tfrac12 r_L\, B_{k_L}\right) \notag\\
  &\geq \bP_{\sigma,\tau,\rho'}\left(t + \rho' + \sigma_{n-j} \in \tau\ \Big|\ t + \sigma_{n-j} > R, t + \rho + \sigma_{n-j} > \tfrac12 r_L\, B_{k_L}\right).
\end{align}
Combining \eqref{belowR}, \eqref{aboveR} and \eqref{aboveR2}, we obtain
\[
  \bP_{\sigma,\tau,\rho'}\big(t + \rho' + \sigma_{n-j} \in \tau \big) \leq (1+\tilde\epsilon_L)\ \bP_{\sigma,\tau,\rho} \big(t + \rho + \sigma_{n-j} \in \tau \big)
\]
By \eqref{rho} and \eqref{rhoprime} this is the same as
\[
  \frac{ \bPst\big(\sigma_n\in \tau \mid \sigma_j - \sigma_{j-1} > r_L\, B_{k_L}, \sigma_{j-1} = t \big) }
    { \bPst(\sigma_n\in\tau \mid \sigma_{j-1} = t) } \leq 1+\tilde\epsilon_L,
\]
which is equivalent to
\[
  \frac{ \bPst(\sigma_j - \sigma_{j-1} > r_L\, B_{k_L} \mid \sigma_n\in \tau, \sigma_{j-1} = t) }
    { \bPst(\sigma_j - \sigma_{j-1} > r_L\, B_{k_L} \mid \sigma_{j-1} = t) } \leq 1+\tilde\epsilon_L,
\]
or by independence,
\begin{equation}\label{rewrite}
  \frac{ \bPst(\sigma_j - \sigma_{j-1} > r_LB_{k_L} \mid \sigma_n\in \tau, \sigma_{j-1} = t) }
    { \bPst(\sigma_j - \sigma_{j-1} > r_LB_{k_L}) } \leq 1+\tilde\epsilon_L.
\end{equation}
Since $t$ and $n$ are arbitrary, this shows that for arbitrary $\nu$, conditionally on $A_L(\nu)$, the 
variables $(\ind_{\{\sigma_i - \sigma_{i-1} > r_L\, B_{k_L}\}})_{ i \leq L}$ are (jointly) stochastically dominated by a Bernoulli sequence of parameter $(1+\tilde\epsilon_L)\bPs(\sigma_1 > r_L\, B_{k_L})$.  In fact, by exchangeability this domination holds for any given $\sigma_\ell - \sigma_{\ell-1}$ conditionally on any information about the other variables $\sigma_i - \sigma_{i-1}$.

\smallskip
We need to quantify what this conclusion says about the variables $S_{L,j}$.  The following is easily established:  there exists $\delta_0>0$ such that for all $p\in(0,1),\epsilon \in(0,1)$ and $k\geq 1$ with $kp<\delta_0$, we have
\[
  \frac{ 1 - (1 - (1+\epsilon)p)^k }{ 1 - (1-p)^k } \leq 1+2\epsilon.
\]
Taking $p = \bPs(\sigma_1 > r_L\, B_{k_L}), k=k_L$ so that $pk \leq \gd_0$ provided $r_L$ is large (see \eqref{Sjbound2}), taking $\epsilon = \tilde\epsilon_L$ and using the stochastic domination, we obtain that for $L$ large, for all $\nu$, \eqref{epeffect} holds.

\medskip

{\bf Step 3.} We next want to show that for certain $\nu$ and $j$ we can make a much stronger statement than \eqref{epeffect}. Specifically, with $L$ fixed, we say an interval $\mI$ is \emph{visited} (in $\nu$) if $\mI\cap \nu \neq \phi$, and (for $3\leq j<\gamma_L-2$) we say 
the sub-block $\mQ_j=((j-1)k_L,jk_L]$ is 
\emph{capped} (in $\nu$) if $\mQ_{j-2}\cup \mQ_{j-1}$ and $\mQ_{j+1}\cup\mQ_{j+2}$ are both visited.  
%Note that each gap $(\nu_{s-1},\nu_s]$ of $\nu$ of length $k = \nu_s - \nu_{s-1} \in [2k_L,3k_L)$ necessarily contains a capped sub-block; we call such a gap \emph{right-sized}. 

We now prove that, if $j$ is such that $\mQ_j$ is capped in $\nu$, we have
\begin{equation}
\label{whencapped}
\bPst\left( S_{L,j}> r_L B_{k_L} \, \big| \, A_L(\nu) \right) \leq 2(r_L)^{-(1-\ga)/2} \, \bPst \big( S_{L,j} > r_L B_{k_L}\big)\, .
\end{equation}

\smallskip

Suppose that $\mQ_j$ is capped in $\nu$  , and that $s$ is an index such that $(\nu_{s-1},\nu_s] \cap \mQ_j  \neq \emptyset$: we  write $s\prec \mQ_j$.
Note that the events
\[
  H_{s,j} = \left\{ \gs: \max_{i\in (\nu_{s-1},\nu_s] \cap \mQ_j} (\gs_i-\gs_{i-1}) > r_L\, B_{k_L} \right\}
\]
are conditionally independent given $A_L(\nu)$, and (by exchangeability) satisfy
\begin{equation}\label{HgivenA}
  \bPst\left( H_{s,j}\ \big|\ A_L(\nu) \right) = \bPst\left( \max_{1\leq i\leq \ell} (\gs_i-\gs_{i-1}) > r_L\, B_{k_L}\ \big|\ \gs_k\in \tau \right),
\end{equation}
where $k:=\nu_s-\nu_{s-1}\leq 5 k_L$ (since $\mQ_j$ is capped), and $\ell:= |(\nu_{s-1},\nu_s] \cap \mQ_j| \leq k$.

\smallskip
Furthermore, since $\ga\in[0,1)$, by \eqref{Doney} we have $\bP(m\in\tau)= \bar \gp(m) m^{-(1-\ga)}$ for some slowly varying $\bar\gp(\cdot)$. Therefore
\begin{align}\label{condeffect}
\bPst\left( \sigma_k\in\tau\, \Big|\, \max_{1\leq i\leq \ell} (\gs_i-\gs_{i-1}) > r_L \, B_{k_L} \right) &\leq \max_{m\geq r_L\, B_{k_L}} \bPt(m\in\tau)
   \notag\\ &\leq c_{10} (r_L\, B_{k_L})^{-(1-\alpha)} \bar\varphi(r_L\, B_{k_L}) \, .
\end{align}
On the other hand, since $0<\tga<1$,  $\sigma_n/B_n$ has a non-degenerate limiting distribution with positive density on $(0,\infty)$ (see Section \ref{sec:notations}). Using again that $\bP(m\in\tau)=\bar \gp(m) m^{-(1-\ga)}$, we therefore find that there exist a constant $c_{11}$ such that, for any $k\leq 5k_L$,
\begin{equation}
  \bPst(\sigma_k\in\tau) \geq \bPs(\sigma_k \in [\bar d_\tau B_k , 2\bar d_\tau B_k ]) \ \min_{\bar d_\tau\leq m\leq 2\bar d_\tau B_{k}} \bPt(m\in\tau) 
    \geq c_{11} B_{k_L}^{-(1-\ga)} \bar \gp(B_{k_L}) \, .
\end{equation}

With \eqref{condeffect} this  shows
there exists some $c_{12}$ such that for large $L$, provided $r_L\to\infty$ slowly enough,
\begin{equation}\label{rLslow}
  \frac{ \bPst\left( \sigma_k\in\tau\, \big|\, \max_{1\leq i\leq \ell} (\gs_i-\gs_{i-1}) > r_L\, B_{k_L} \right) }{ \bPst(\sigma_k\in\tau) } \leq (c_{12}\vee \bar\gp(r_L))\, r_L^{-(1-\alpha)} \leq r_L^{-(1-\alpha)/2},
\end{equation}
or equivalently, using \eqref{HgivenA},
\begin{equation}\label{effect2}
  \frac{ \bPst\left( H_{s,j}\ \big|\ A_L(\nu) \right) }{ \bPst(H_{s,j}) } =
  \frac{ \bPst\left( \max\limits_{1\leq i\leq \ell} (\gs_i-\gs_{i-1})> r_L\, B_{k_L}\ \big|\ \sigma_k\in\tau \right) }{ \bPst \left(\max\limits_{1\leq i\leq \ell} (\gs_i-\gs_{i-1}) > r_L\, B_{k_L} \right) } \leq (r_L)^{-(1-\alpha)/2}.
\end{equation} 
Since the $\{H_{s,j}: s\prec \mQ_j\}$ are independent for fixed $j$ (even when conditioned on $A_L(\nu)$), with $\bigcup_{s\prec\mQ_j} H_{s,j} = \{ S_{L,j} > r_L\, B_{k_L} \}$ and $\bPst\left( S_{L,j} > r_L\, B_{k_L} \right)\to 0$ as $L\to\infty$, we have from \eqref{effect2} that for large $L$
\begin{align} \label{condindep}
  \bPst\left( S_{L,j} > r_L\, B_{k_L}\ \big|\ A_L(\nu) \right) 
  &\leq \sum_{s\prec\mQ_j} \bPst\left( H_{s,j}\ \big|\ A_L(\nu) \right) \notag\\
  & \leq (r_L)^{-(1-\alpha)/2} \sum_{s\prec\mQ_j} \bPst(H_{s,j}) \notag\\
  &\leq 2(r_L)^{-(1-\alpha)/2} \bPst\left( S_{L,j} > r_L\, B_{k_L} \right)\, .
\end{align}

\medskip

{\bf Step 4.} We now control the number of capped blocks $\mQ_j$.
Let 
\[
  \mC(\nu) = \{j\leq \gamma_L: \mQ_j \text{ is capped in } \nu\}, \qquad m^*(n):=\bE_{\nu}[\nu_1 \wedge n]\ ,
  \]
and define what is heuristically a lower bound for the typical size of $|\mC(\nu)|$ (see \eqref{probagood} and its proof):
\begin{equation}
\label{def:D}
D_L:= 
\begin{cases}
\gamma_L^{\ga^*}\, \gp^*(k_L) \gp^*(L)^{-1}  & \text{ if } 0<\ga^*<1 \, ,\\
\gamma_L\, m^*(k_L) m^*(L)^{-1} & \text{ if } \ga^*=1,\, \bE_{\nu}[\nu_1]=+\infty\, ,\\
L \gp^*(k_L) k_L^{-\ga^*}& \text{ if } \bE_{\nu}[\nu_1]<+\infty\, .
\end{cases}
\end{equation}
We now restrict our choice of  $\gamma_L, k_L$ as follows (with further restriction to come):
\begin{itemize}
\item[(i)] if $\bE_{\nu}[\nu_1]<+\infty$, we choose $k_L\equiv 1,\gamma_L=L$, and get $D_L=L\gp^*(1)$ ;
\item[(ii)] if $\bE_{\nu}[\nu_1]=+\infty$ we choose $1\ll k_L\ll L$ such that $\gamma_L=L/k_L$ is slowly varying, and as $L\to\infty$ we have $\gamma_L\to \infty$ slowly enough so $\gp^*(k_L) \sim\gp^*(L)$ if $\ga^*<1$ or $m^*(k_L)\sim m^*(L)$ if $\ga^*=1$, which is possible since for $\ga^*=1$, $m^*(n)$ is slowly varying. We obtain $D_L\sim \gamma_L^{\ga^*}\gg 1$.
\end{itemize}
We then fix $\kappa > 0$ and define the set $\mG$ of \emph{good} trajectories by
\begin{equation}
\label{def:G}
 \mG = \{ \nu:  |\mC(\nu)| \geq \kappa D_L \}.
\end{equation}
We wish to choose $\kappa = \kappa(\eta,\gep)$ sufficiently small so that, provided that $L$ is large enough and $b-a \geq \gep L$, one has
\begin{equation}
\label{probagood}
\bP_{\nu}\big( \mG\, |\, \nu_0=a\, ;\, b\in\nu \, ;\, \nu \cap (b,L]=\emptyset \big) \geq 1-\eta/2 \, .
\end{equation}
For a fixed $\gep$, for large $L$ we have $D_M\geq \gep D_L$ for all $M\geq \gep L$, so it suffices to prove this for $\gep=1$, that is, to consider only $a=0,b=L$. We now consider two cases.
 
\smallskip
\textbullet\ \textit{Case of $\ga^*<1$ or $\bE_{\nu}[\nu_1]<+\infty$.}
In that case, it is standard that there exists a constant $c_{13}(\eta)$ such that, defining $n_L := c_{13}\, \min(\gp^*(L)^{-1} L^{\ga^*},L)$,  we have for $L$ large enough
\begin{equation}
\label{countnu}
\bP_{\nu}\big( |\nu\cap[0,L/2]|\leq n_L \big) \leq \eta \, .
\end{equation}
Observe that each gap $(\nu_{s-1},\nu_s]$ of $\nu$ of length $\nu_s-\nu_{s-1} \in [2 k_L, 3 k_L)$ contains a capped sub-block. We therefore simply need to count the number of such gaps: denote
\[
  V_n:= \sum_{s=1}^n \ind_{\{\nu_s - \nu_{s-1} \in[2k_L,3k_L)  \}}.
\]
There exists a constant $c_{14}=c_{14}(\eta)$ such that, provided that $n\bP_{\nu}(\nu_1\in[2k_L,3k_L))$ is large enough, one has
\begin{equation}
\label{pVn}
\bP_{\nu}\big(V_n \leq c_{14}\, n\, \bP_{\nu}(\nu_1\in[2k_L,3k_L)) \big) \leq \eta \, .
\end{equation}
Using that $\bP_{\nu}(\nu_1\in[2k_L,3k_L) )$ is of order $\gp^*(k_L) k_L^{-\ga^*}$ we get that $ D_L \leq c_{15}^{-1} n_L \bP_{\nu}(\nu_1\in[2k_L,3k_L) )$ for some $c_{15}(\eta)$, and we obtain
\begin{align*}
\bP_{\nu}&\big( \mG^c\, |\, L\in\nu \big) \leq \bP_{\nu} \Big( \big| \big\{ s \leq |\nu\cap[0,L]|:\, \nu_s-\nu_{s-1} \in[2k_L, 3k_L) \big\} \big| < \kappa D_L\, \Big|\, L\in\nu \Big) \\
 &\leq \bP_{\nu}\Big( |\nu\cap[0,L/2]|\leq n_L \big| L\in\nu \Big)  + \bP_{\nu}\Big( |\nu\cap[0,L/2]|\geq n_L ; V_{n_L}< \gk D_L \big| L\in\nu \Big)\\
&\leq c_{16}\eta + c_{16} \bP_{\nu}\Big( V_{n_L}< \frac{\gk}{c'_{\eta}} n_L \bP_{\nu}(\nu_1\in[2k_L,3k_L) ) \Big) \\
&\leq 2c_{16} \eta\, .
\end{align*}
In the third inequality, we used Lemma \ref{lem:condtioning} (Lemma A.2 in \cite{GLT09}, that we recall in the Appendix since we use it several times) to remove the conditioning at the expense of the constant $c_{16}$. In the last inequality, we used \eqref{pVn}, taking $\kappa$ sufficiently small.  

%{\color{red} [FIX: Maybe it's better to state Lemma A.2 as an equation in the paper, since we refer to it 4 times.]}

\smallskip
\textbullet\ \textit{Case of $\ga^*=1$, $\bE_{\nu}[\nu_1]=+\infty$.}
First, we claim that with our choice of $k_L$, we have 
$
 \bP_\nu\left( \mQ_j \text{ is capped}\right) \stackrel{L\to\infty}{\to} 1 .
$
Indeed, summing over possible locations of a last visit to $\mQ_j$ gives that for $1\leq j\leq\gamma_L$ and $i\leq (j-1)k_L$,
\begin{align} \label{locations}
 \bP_\nu \Big( \mQ_j \text{ is visited } \big|\ i\in \nu \Big) &\geq \sum_{m=0}^{k_L-1} \bP_\nu( jk_L-m-i \in \nu)\bP_\nu(\nu_1>m) \notag\\
 &\geq \inf_{\ell\leq L} \bP_\nu(\ell\in \nu) m^*(k_L) \notag\\
 &\sim \frac{ m^*(k_L) }{ m^*(L) }\sim 1\quad \text{as } L\to\infty,
\end{align}
where we used $\sum_{m=0}^{k_L-1} \bP(\nu_1>m) = m^*(k_L)$ in the second inequality, and \eqref{Doney} in the last line.  Note that the lower bound in \eqref{locations} is uniform in the specified $i,j$.
Therefore, applying \eqref{locations} three times, we obtain uniformly in $j\leq \gamma_L$
\begin{equation}\label{cappedQj}
  \bP_\nu\left( \mQ_j \text{ is capped}\right) \geq \bP_\nu\left( \mQ_{j-1},\mQ_j,\mQ_{j+1} \text{ are visited}\right) 
    \stackrel{L\to\infty}{\to} 1\, .
\end{equation}
%Now let us show that, taking $\kappa=1/8$, we have $\bP_\nu (\mG^c | L\in\nu) \to 0$ as $L\to\infty$. 
Recall $D_L\sim\gamma_L$, and 
denote $W_L:= |\{j\leq \gamma_L/2 \, ;\, \mQ_j \text{ is not capped}\}|$, so that $\bE_\nu[W_L] = o(\gamma_L)$, by \eqref{cappedQj}.
Taking $\kappa=1/4$, thanks to Markov's inequality we then have for large $L$
\begin{equation}\label{GcgivenL}
\bP_{\nu}\big( \mG^c\, |\, L\in\nu \big) \leq  \bP_{\nu}\left( W_L\geq  \tfrac18 \gamma_L \mid L\in\nu \right)
  \leq c_{16} \bP_{\nu}\left( W_L\geq  \tfrac18 \gamma_L \right)
  \to 0 \quad \text{  as } L\to\infty,
\end{equation}
where in the second inequality, we used Lemma \ref{lem:condtioning} to remove the conditioning at the expense of the constant $c_{16}$.

\medskip

{\bf Step 5.} We now have the ingredients to control how the conditioning by $A_L(\nu)$ shifts $\bEs[Y_L]=\gamma_L \bPs(S_{L,1} > r_L B_{k_L})$, when $\nu\in \mG$.

We wish to choose $\gamma_L,k_L$ so that, for $L$ sufficiently large,
\begin{equation}
\label{cond:a} 
\gamma_L  \tilde \epsilon_{L} \, \leq   \frac18 \kappa D_L \le \frac18 |\mC(\nu)| \quad\text{for all } \nu\in\mG.
\end{equation}
The second inequality is just the definition of $\mG$.  When $\ga^*\geq 1$, $\gamma_L$ and $D_L$ are of the same order, so the first inequality follows from $\tilde\gep_L\to 0$. So consider the case $\ga^*<1$, where $D_L\sim\gamma_L^{\alpha^*}$ by (ii) after \eqref{def:D}. Here the first inequality in \eqref{cond:a} follows if we have $\gamma_L^{1-\ga^*} \ll \tilde \epsilon_L^{-1}$.  Since $\gep_n\searrow 0$ and $L/k_L$ is slowly varying, we have $B_{k_L} \gg L$ so $\tilde\gep_L \leq \gep_L$ so a sufficient condition is $\gamma_L^{1-\ga^*} \ll \gep_L^{-1}$. But our only restriction so far is from (ii) after \eqref{def:D}, that $\gamma_L\to\infty$ slowly enough, so we may choose $\gamma_L$ to satisfy this sufficient condition also.

Thanks to \eqref{epeffect}, \eqref{effect2} and \eqref{cond:a},  we have for $\nu\in\mG$ and large $L$
\begin{align}\label{rightsized}
  \bE_{\sigma,\tau}&[Y_L\mid A_L(\nu)] \notag\\
  &= \sum_{j=1}^{ \gamma_L} \bP_{\sigma,\tau}(S_{L,j} > r_L\, b_{k_L} \mid A_L(\nu)) \notag\\
  &\leq 2|\mC(\nu)| \, (r_L)^{-(1-\alpha)/2}\bPs(S_{L,1} > r_L \, B_{k_L})
    + (\gamma_L - |\mC(\nu)|)(1+2\tilde \epsilon_L) \bPs(S_{L,1} > r_L\, B_{k_L}) \notag\\
  &\leq \Big[ \gamma_L - |\mC(\nu)| + 2|\mC(\nu)| \, (r_L)^{-(1-\alpha)/2} + 2\gamma_L \tilde \epsilon_L \Big]
    \bPs(S_{L,1} > r_L B_{k_L}) \notag\\
  &\leq \left[ \gamma_L - \tfrac12 \kappa D_L \right] \bPs(S_{L,1} > r_L B_{k_L}).
\end{align}

\medskip
{\bf Step 6.} We now define
\begin{align}
  J_L &:= \left\{ \sigma: \sum_{j\leq \gamma_L} \ind_{\{S_{L,j} > r_L\, B_{k_L}\}} \leq \left[ \gamma_L - \tfrac{1}{4}\gk D_L \right] \bPs(S_{L,1} > r_L\, b_{B_L}) \right\} \notag\\
  &=\left\{ Y_L -\bE[Y_L] \leq - \frac{\gk}{4} D_L \bPs(S_{L,1} > r_L\, B_{k_L}) \right\}.
  \label{def:JL}
\end{align}
Let us compare $\bP(J_L)$ and $\bP(J_L^c | A_L(\nu))$ for $\nu\in\mG$.
Since $\ga^*>1/2$, $\gamma_LD_L^{-2}$ is of order $L^{-(2\ga^*-1)\wedge 1}\to 0$, so we can choose $r_L$ to satisfy $\gamma_LD_L^{-2}  r_L^{\tga}\to 0$,
which is compatible with our previous requirement on $r_L$ involving \eqref{rLslow}.
Using \eqref{eq:EandVarYL} and Chebyshev's inequality we then get that
\begin{equation}
\label{chebichevJ}
\bP(J_L)\leq \frac{16\gamma_L}{\gk^2 D_L^2 \bPs(S_{L,1} >r_L\, b_{k_L})} \stackrel{n\to\infty}{\sim}  \frac{16 \, \tga}{\gk^2} \, \gamma_LD_L^{-2}  r_L^{\tga}
  \to 0\quad\text{as } L\to\infty.
\end{equation}
On the other hand, by \eqref{epeffect}, conditionally on $A(\nu)$ with $\nu\in \mG$,  the variables $\ind_{\{S_{L,j} > r_L b_{k_L}\}}, 1\leq j\leq \gamma_L$, are (jointly) stochastically dominated by a collection of independent Bernoulli variables with parameter  $(1+2\tilde \epsilon_{L}) \bPs(S_{L,1}>r_L B_{k_L})$. Hence, $\Var(Y_L | A_L(\nu)) \leq 2 \gamma_L \bPs(S_{L,1}>r_L B_{k_L})$ and as in \eqref{chebichevJ} we obtain
\begin{equation}
\label{probaJc}
  \bPst(J_L^c \mid A_L(\nu))\leq \bP\Big(Y_L - \bEst[Y_L | A_L(\nu)] \geq  \frac{\kappa}{4} D_L \bPs(S_{L,1} > r_L\, b_{k_L}) \Big)\stackrel{L\to\infty}{\to} 0 \, .
\end{equation}
We have thus proved \eqref{eq:cond1} and \eqref{JLcbound}, and hence also \eqref{eq:cond2}.
\qed

\begin{remark}\rm
\label{rem:monotone}
If $n\mapsto \bP(n\in\tau)$ is non-increasing, then we have that $\epsilon_k=0$ for all $k$, and we can replace \eqref{epeffect} with
\[
  \bPst\left( S_{L,j} > r_L \, B_{k_L}\ \big|\ A_L(\nu), \{S_{L,\ell}, \ell \neq j\} \right) \leq \bPs(S_{L,j} > r_L \, B_{k_L})\, .
\]
The term $\gamma_L \tilde\epsilon_L$ does not appear in the computation in \eqref{rightsized}, and we can drop the condition \eqref{cond:a}. We can therefore choose $\gamma_L=L,k_L=1$ in all cases, not just when $\bE_\nu[\nu_1]<\infty$. 

Then for $\ga^*<1$, we have $ D_L$ of order $L^{\ga^*}  \varphi^*(L)^{-1}$, and the condition $\gamma_LD_L^{-2}\to 0$ in Step 6 becomes
\[
L^{2\ga^*-1} \gp^*(L)^{-2} \gg 1.
\]
We therefore need $\ga^*>1/2$, or $\ga^*=1/2$ and $\varphi^*(L) \stackrel{L\to\infty}{\to} 0$, going slightly beyond the condition $\ga^*>1/2$.
In Appendix \ref{app:K*}, we show that $\gp^*(n) \sim c \tilde \varphi(b_n)^{-(1-\ga)/\tilde{\ga}} \varphi_0(b_n)$, with $\varphi_0(k) = \gp(k)^{-1}$ if $\ga\in(0,1)$, and $\bar\gp(k) = \gp(k)/\bPt(\tau_1>k)^2$ if $\ga=0$, cf.~\eqref{Doney}. We have that $\lim_{n\to\infty}\varphi^*(n)  = 0$ if and only if $\lim_{k\to\infty} \tilde \varphi(k)^{(1-\ga)/\tilde{\ga}} \varphi_0(k) = +\infty$.
\end{remark}

\begin{proof}[Proof of Lemma \ref{keylemma}]
The proof of Lemma \ref{lem:eventJ} is for a single interval $[0,L]$, and we now adapt it to the whole system: we take the same definition for $J_L$.
Then, recalling $g_\mI$ from \eqref{defg}, we have
similarly to \eqref{lemma32:1}
\begin{multline}
\label{lemma33:1}
\bEs\Big[ g_\mI \mZ_{\mI}\Big]
 = \sumtwo{d_{i_1}\leq f_{i_1}}{d_{i_1},f_{i_1}\in \mB_{i_1}} \cdots \sumtwo{d_{i_m}\leq 
   f_{i_m}=nL}{d_{i_m}\in\mB_{i_m}}  \prod_{k=1}^m K^*(d_{i_k}-f_{i_{k-1}}) \bP(f_{i_k} -d_{i_k}\in\nu) \\
\times   \bE_{\nu}\Big[ \bEst\Big( \prod_{k=1}^m (\eta+ \ind_{\{\sigma_{\mB_{i_k}}\in J_L^c\}})\ \big|\  A(\nu) \Big)  \Big|  
  \mE_{(d_{i_k},f_{i_k})_{1\le k\leq m}} \Big],
\end{multline}
where, setting $\nu_{\mB_i}=\nu\cap \mB_i$,
\[
\mE_{(d_{i_k},f_{i_k})_{1\le k\leq m}}  :=\big\{ \nu:   \forall 1\leq  k \leq m \, , \, \min(\nu_{\mB_{i_k}})=d_{i_k}, \max(\nu_{\mB_{i_k}})=f_{i_k}; \nu_{\mB_{i}}=\emptyset \, \text{ if } i\notin\mI \big\} .
\]

Now, for any $1\leq k\leq m$ such that $f_{i_k}-d_{i_k}\leq \gep L$ or $\nu_{\mB_{i_k}}\notin\mG$, we bound $\ind_{\{\sigma_{\mB_{i_k}}\in J_L^c\}}$ by $1$. We get the bound
\begin{equation}\label{indicators}
  \eta+ \ind_{\{\sigma_{\mB_{i_k}}\in J_L^c\}} \leq 
  \eta+ \ind_{\{f_{i_k}-d_{i_k}\leq \gep L\}} +  \ind_{\{f_{i_k}-d_{i_k}>\gep L \}} \ind_{\{\nu_{\mB_{i_k}} \notin\mG\}} 
  + \ind_{\{\sigma_{\mB_{i_k}}\in J_L^c\}}  \ind_{\{\nu_{\mB_{i_k}} \in\mG\}}.
\end{equation}
By expanding the product over $k\in\{1,\ldots, m\}$, we obtain
\begin{align}
\label{bigexpansion}
\bE_{\nu}&\Big[ \bEst\Big( \prod_{k=1}^m (\eta+ \ind_{\{\sigma_{\mB_{i_k}}\in J_L^c\}})  \mid  A(\nu) \Big)  \Big|  \mE_{(d_{i_k},f_{i_k})_{1\le k\leq m}} \Big] \notag\\
&\leq \sumtwo{K_1,K_2,K_3,K_4 \ \text{disjoint}}{\bigcup_{j=1}^4 K_j = \{1,\ldots,m\}} \eta^{|K_1|} \prod_{k\in K_2} \ind_{\{f_{i_k}-d_{i_k}\leq \gep L\}} \notag \\
 & \hspace{1.5cm} \bE_\nu \Big[   \prod_{k\in K_3} \ind_{\{f_{i_k}-d_{i_k}>\gep L \}} \ind_{\{\nu_{\mB_{i_k}} \notin\mG\}}   \bEst\Big( \prod_{k\in K_4} \ind_{\{\sigma_{\mB_{i_k}}\in J_L^c\}}  \ind_{\{\nu_{\mB_{i_k}} \in\mG\}}\ \Big|\ A(\nu)\Big)  \, \Big| \, \mE_{(d_{i_k},f_{i_k})_{1\le k\leq m}} \Big].
\end{align}

The argument in Step 6 of the proof of Lemma \ref{lem:eventJ}, using domination of the variables $(\ind_{S_{L,j} > r_L B_{k_L}})$ in some $\mB_{i_k}$ by independent Bernoullis, remains valid if we also condition on any information about the other $\mB_i,i\neq i_k$.  This means we can ignore the dependencies between different $i_k \in \mI$, and if $L$ is sufficiently large we get that, since $\nu_{\mB_{i_k}}\in\mG$ for all $k\in K_4$
\begin{equation}\label{expansionterm4}
 \bEst\Big[ \prod_{k\in K_4} \ind_{\{\sigma_{\mB_{i_k}}\in J_L^c\}}  \ind_{\{\nu_{\mB_{i_k}} \in\mG\}} \mid A(\nu)\Big]   \leq (\eta/2)^{|K_4|}\, ,
\end{equation}
where the bound comes from~\eqref{probaJc}.

Then,  by independence of the $\nu_{\mB_{i_k}} \cap [d_{i_k},f_{i_k}]$ conditionally on $\mE_{(d_{i_k},f_{i_k})_{1\le k\leq m}} $, we get that
\begin{align}
\label{expansionterm3}
\bE_\nu &\Big[   \prod_{k\in K_3} \ind_{\{f_{i_k}-d_{i_k}>\gep L \}} \ind_{\{\nu_{\mB_{i_k}} \notin\mG\}}   \, \Big| \, \mE_{(d_{i_k},f_{i_k})_{1\le k\leq m}} \Big] \notag\\
& =  \prod_{k\in K_3} \ind_{\{f_{i_k}-d_{i_k}>\gep L \}} \bP_{\nu}\big( \nu \notin  \mG\, |\, \nu_0=0\, ;\, f_{i_k}-d_{i_k}\in\nu \, ;\, \nu \cap (f_{i_k}-d_{i_k},L]=\emptyset \big)
\leq (\eta/2)^{|K_3|}\, ,
\end{align}
where we used \eqref{probagood} in the last inequality.

Therefore, plugging \eqref{expansionterm4}-\eqref{expansionterm3} in \eqref{bigexpansion}, we get that
\begin{align*}
\bE_{\nu}\Big[ &\bEst\Big( \prod_{k=1}^m (\eta+ \ind_{\{\sigma_{\mB_{i_k}}\in J_L^c\}})\ \Big|\ A(\nu) \Big)  \Big|  \mE_{(d_{i_k},f_{i_k})_{1\le k\leq m}} \Big] \\
&\leq \sumtwo{K_1,K_2,K_3,K_4 \ \text{disjoint}}{\bigcup_{j=1}^4 K_j = \{1,\ldots,m\}} \eta^{|K_1|} (\eta/2)^{|K_3|} (\eta/2)^{|K_4|}\prod_{k\in K_2} \ind_{\{f_{i_k}-d_{i_k}\leq \gep L\}} =\prod_{k=1}^m (2\eta + \ind_{\{f_{i_k}-d_{i_k}\leq \gep L\}})\,.
\end{align*}
\end{proof}

\section{Proof of Theorem \ref{thm:exist}: examples with unequal critical points}
\label{sec:example}

We use the representation \eqref{zcheck} which yields that for $\zeta\in (0,1)$,
\begin{align}\label{zcheck2}
\bE_\sigma\left[ \left(\check Z_{N,u}^{\sigma}\right)^\zeta \right] &\leq \sum_{m=1}^{N} 
  e^{u\zeta m} \sum_{i_1<\cdots<i_m=N} \prod_{k=1}^m 
  K^*(i_k-i_{k-1})^\zeta   \bE_\sigma\left[ w(\sigma,i_{k-1},i_k)^\zeta \right] \notag\\
&= \sum_{m=1}^{N} e^{u\zeta m} \sum_{i_1<\cdots<i_m=N} \prod_{k=1}^m 
  \frac{ \bEs\left[ \bPt( \sigma_{i_k-i_{k-1}} \in \tau )^\zeta \right] }{ \bEst\left[ |\sigma\cap\tau| \right]^\zeta } \notag\\
&\leq \sum_{m=1}^N \left( e^{u\zeta} \sum_{j=1}^\infty \frac{ \bEs\left[ \bPt( \sigma_j \in \tau )^\zeta \right] }
  { \bEst\left[ |\sigma\cap\tau| \right]^\zeta } \right)^m.
\end{align}
Hence if 
\begin{equation} \label{less1}
  \sum_{j=1}^\infty \frac{ \bEs\left[ \bPt( \sigma_j \in \tau )^\zeta \right] }
  { \bEst\left[ |\sigma\cap\tau| \right]^\zeta } < 1\ ,
\end{equation}
then for sufficiently small $u>0$ we have \eqref{fracmomentfinite}, which establishes inequality of critical points since $u_c^{\a}=0$.

Fix $\ga,\tga,\gp,\tphi$ as in the theorem, and for $\gep\in (0,1/2)$ and even $N\geq 3$ define
\[
  K_{N,\gep}(n) = \begin{cases} \gep &\text{if } n=1,\\  
    0 &\text{if } 2 \leq N < N/2, \\ \frac{2p_{N,\gep}}{N} &\text{if } N/2\leq n \leq N-1, \\ n^{-(1+\ga)}\gp(n) &\text{if } n\geq N,\\
    \end{cases}
\]
where $p_{N,\gep}$ is chosen so that $\sum_{n\geq 1} K_{N,\gep}(n) = 1$. 
Define $\tK_{\tN,\gep}(n)$ and $\tp_{\tN,\gep}$ similarly with $\tga,\tphi,\tN$ in place of $\ga,\gp.N$, and suppose $\tau,\sigma$ have distributions $K_{N,\gep},\tK_{\tN,\gep}$ respectively. Note that $p_{N,\gep},\tp_{\tN,\gep}$ are in $(1/2,1)$ provided $N$ is sufficiently large. 
It suffices to show that \eqref{less1} holds, for some $N,\tN,\gep,\zeta$.

We use superscripts $N,\tN,\gep$ to designate these parameters in the corresponding probability measures and expectations, for example $\bPst^{N,\tN,\gep}(\cdot)$ and $\bEs^{N,\gep}[\cdot]$. We always take $N\geq \tN$.
For the denominator in $\tau$, we observe that
\begin{align}\label{meeting1}
  \bEst^{N,\tN,\gep}\left[ |\sigma\cap\tau| \right] \geq \bPst^{N,\tN,\gep}(\tau_1=\sigma_1=1) = \gep^2\ .
\end{align}
For the numerator in \eqref{less1}, we write
\begin{align*}
\sum_{j=1}^\infty \bEs^{\tN,\gep}\left[ \bPt^{N,\gep}( \sigma_j \in \tau )^\zeta \right]  &= \sum_{j=1}^{+\infty} \sum_{m=1}^{+\infty} \bPt^{N,\gep}(m\in\tau)^\zeta \bPs^{\tN,\gep}(\sigma_j=m) \\ 
&= \sum_{m=1}^{+\infty} \bPt^{N,\gep}(m\in\tau)^\zeta \bPs^{\tN,\gep}(m \in \sigma) \, .
\end{align*}
For $m<  N/2$ we have $\bPt^{\tN,\gep}(m \in \tau) =\gep^m$ (and similarly for $\sigma$ for $m<\tilde N/2$),  so it follows that if $\gep$ is small enough (and $\tilde N$ large)
\begin{align}
\label{numer1}
\sum_{j=1}^\infty \bEs^{\tN,\gep}\left[ \bPt^{N,\gep}( \sigma_j \in \tau )^\zeta \right]  &\le  \sum_{m=1}^{\tilde N/2-1} \gep^{(1+\zeta) m} + \sum_{m=\tilde N/2}^{N/2-1} \gep^{\zeta m} + \sum_{m=N/2}^\infty \bPt^{N,\gep}(m\in\tau)^\zeta \bPs^{\tN,\gep}(m\in\sigma)  \notag \\
& \le 2 \gep^{1+\zeta} + \sum_{m=N/2}^\infty \bPt^{N,\gep}(m\in\tau)^\zeta \bPs^{\tN,\gep}(m\in\sigma) \, .
\end{align}
%separating out the $m=j$ and $j<m<N/2$ terms, we have
%\begin{align}\label{numer1}
%  \sum_{j=1}^\infty &\bEs^{\tN,\gep}\left[ \bPt^{N,\gep}( \sigma_j \in \tau )^\zeta \right] \notag\\
%  &= \sum_{j=1}^\infty \sum_{m=j}^\infty \bPt^{N,\gep}(m\in\tau)^\zeta \bPs^{\tN,\gep}(\sigma_j=m) \notag\\
%  &\leq \sum_{j=1}^\infty \gep^j \bPt^{N,\gep}(j\in\tau)^\zeta + \sum_{j=1}^\infty \sum_{m=j+\tN/2-1}^{N/2-1} \gep^{m\zeta} \bPs^{\tN,\gep}(\sigma_j=m) \notag\\
%  &\qquad + \sum_{j=1}^\infty \sum_{m=N/2}^\infty \bPt^{N,\gep}(m\in\tau)^\zeta \bPs^{\tN,\gep}(\sigma_j=m) \notag\\
%  &\leq \sum_{j<N/2} \gep^{j(1+\zeta)} + \sum_{j\geq N/2} \gep^j 
%    + \sum_{m=\tN/2}^\infty \gep^{m\zeta}
%    + \sum_{m=N/2}^\infty \sum_{j=1}^{m+1-N/2} \bPt^{N,\gep}(m\in\tau)^\zeta \bPs^{\tN,\gep}(\sigma_j=m)\notag\\
%  &\leq 2\gep^{1+\zeta} + \sum_{m=N/2}^\infty \bPt^{N,\gep}(m\in\tau)^\zeta \bPs^{\tN,\gep}(m\in\sigma)\ .
%\end{align}
We now would like to show that if $N$ is large, the last sum  is smaller than the term $2\gep^{1+\zeta}$.

\smallskip
We make two claims. First, 
there exists $c_{17}$ such that provided $N$ is large, 
\begin{equation}\label{mtauclaim1}
  \bPt^{N,\gep}(m\in\tau) \leq \frac{c_{17}}{N} \text{  for all } m 
    \geq \frac N2\ .
    %\quad\text{and}\quad \bPs^{\tN,\gep}(m\in\sigma) \leq \frac{c_{17}}{\tN} \quad\text{for all } m \geq \frac{\tN}{2}\ .
\end{equation}
Second, there exists $c_{18}(\gep)$ and slowly varying $\psi_0,\tps_0,\psi,\tps$ such that for all sufficiently large $N,\tN$,
\begin{equation}\label{mtauclaim2t}
  \bPt^{N,\gep}(m\in\tau) \leq c_{18}m^{-(1-\ga)}\psi_0(m) \quad \text{for all } m\geq N^{1/(1-\ga)}\psi(N)
\end{equation}
and
\begin{equation}\label{mtauclaim2s}
  \bPs^{\tN,\gep}(m\in\sigma) \leq c_{18}m^{-(1-\tga)}\tps_0(m)\quad \text{for all } m\geq \tN^{1/(1-\tga)}\tps(\tN)\ .
\end{equation}
Note that by \eqref{Doney} the inequalities in \eqref{mtauclaim2t} and \eqref{mtauclaim2s} hold for sufficiently large $m$, so the aspect to be proved is that the given ranges of $m$ are sufficient in the case of distributions $K_{N,\gep},\tK_{\tN,\gep}$.
We henceforth take $N\geq 2\tN^{1/(1-\ga)}\tps(\tN)$ and $\zeta\in (\tga/(1-\ga),1)$.
Applying \eqref{mtauclaim1} and \eqref{mtauclaim2s} to part of the  sum on the right  in \eqref{numer1} then yields that for some slowly varying $\psi_1$, provided $N$ is large,
\begin{align}\label{lowerm}
  \sum_{N/2\leq m\leq N^{1/(1-\ga)}\psi(N)} \bPt^{N,\gep}(m\in\tau)^\zeta &\bPs^{\tN,\gep}(m\in\sigma) \leq \frac{c_{17}^\zeta}{N^\zeta}
    \sum_{m\leq N^{1/(1-\ga)}\psi(N)} c_{18}m^{-(1-\tga)} \tilde\psi_0(m) \notag\\
  &\leq c_{19} N^{-\zeta+\tga/(1-\ga)} \psi_1(N) \leq \gep^2\ .
\end{align}
Then taking $N$ large and applying \eqref{mtauclaim2t} and \eqref{mtauclaim2s} to the rest of that same sum yields that for some slowly varying $\psi_2$,
\begin{align}\label{upperm}
  \sum_{m>N^{1/(1-\ga)}\psi(N)} \bPt^{N,\gep}(m\in\tau)^\zeta \bPs^{\tN,\gep}(m\in\sigma) &\leq c_{19}N^{-\zeta+\tga/(1-\ga)}\psi_2(N) < \gep^2\ .
\end{align}
Provided $\gep$ is small, \eqref{meeting1}, \eqref{numer1}, \eqref{lowerm} and \eqref{upperm} yield
\begin{equation}\label{combined}
  \sum_{j=1}^\infty 
    \frac{ \bEs^{\tN,\gep}\left[ \bPt^{N,\gep}( \sigma_j \in \tau )^\zeta \right] }{ \bEst^{N,\tN,\gep}\left[ |\sigma\cap\tau| \right]^\zeta }
    \leq \frac{2\gep^{1+\zeta} + 2\gep^2}{(2\gep^2)^\zeta} < 1\ ,
\end{equation}
proving \eqref{less1} and completing the proof of Theorem \ref{thm:exist}.

\medskip
We now prove \eqref{mtauclaim1}. 
% for $\tau$; the proof for $\sigma$ is the same.  
Fix $m\geq N/2$ and let 
\[
  F_m = \max \tau\cap [0,m-N], \quad G_m = \min \tau \cap (m-N,m],
  \]
with $G_m=\infty$ when the corresponding intersection is empty, and $F_m$ undefined, $G_m=0$ when $m<N$.  
From the definition of $K_{N,\gep}$ we have for $N/2\leq k<N$
\begin{align}\label{firsthalf}
  \bPt^{N,\gep}(m\in\tau \mid G_m=m-k) &= \gep^k + \sum_{j=0}^{k-N/2} (j+1) \gep^j \frac{2p_{N,\gep}}{N} \leq \gep^{N/2} + \frac 3N \leq \frac4N\ ,
\end{align}
and for $0\leq k <N/2$
\begin{equation}\label{secondhalf}
  \bPt^{N,\gep}(m\in\tau \mid G_m=m-k) = \gep^k.
\end{equation}
Now \eqref{mtauclaim1} follows from \eqref{firsthalf} when $N/2\leq m<N$, so we consider $m\geq N$. It is enough to show that for all $m\geq N$ and $N\leq j\leq m$,
\begin{equation} \label{jumppoint}
  \bPt^{N,\gep} \big( m\in \tau \mid F_m = m-j \big) \leq \frac{c_{17}}{N}\ .
\end{equation}
For any $N\leq j\leq m$, we have that 
\begin{align}\label{givenFm}
\bPt^{N,\gep} \big( m\in \tau \big|  F_m = m-j \big) &=\sum_{k=0}^{N-1} \bPt^{N,\gep}\big( m \in \tau \big| G_m = m-k) \bPt^{N,\gep}\big( G_m = m-k \big| F_m = m-j\big) \notag \\
&\le  \sum_{k=0}^{N/2 -1} \gep^k \bPt^{N,\gep}\big(  G_m = m-k \big| F_m = m-j \big)  + \frac 4N \, ,
\end{align}
where we used \eqref{secondhalf} for the terms $k<N/2$, and \eqref{firsthalf} for the terms $N/2\le k<N$.

We now show that there is a constant $c_{18}$ such that uniformly for $k<N/2,j\ge N$,
\begin{equation}\label{FmGm}
  \bPt^{N,\gep}\big(  G_m = m-k \mid F_m = m-j \big)\le c_{18}/N\, .
\end{equation}
The probability on the left is equal to $\bPt(\tau_1=j-k)/\bPt(\tau_1>j-N)$.  Therefore
for each $j\geq 3N/2$, the probabilities $\bPt(G_m = m-k | F_m=m-j), k<N/2$ are all of the same order so are bounded by $c_{20}/N$ for some $c_{20}$. 

For the other $j$ values, that is, $N\leq j<3N/2$, the probabilities $\bPt(G_m = m-k \mid F_m=m-j), j-N<k\leq j-N/2$, are all equal so have common value at most $2/N$.  For $k\leq j-N$ the probabilities $\bPt(G_m = m-k | F_m=m-j)$ are uniformly smaller than this common value provided $N$ is large, because $n^{-(1+\ga)}\gp(n) \leq 2p_{N,\gep}/N$ for all $n\geq N$. 
Therefore the bound of $2/N$ applies for all $k\leq j-N/2$, which includes all $k<N/2$.

This proves \eqref{FmGm}, which with \eqref{givenFm} proves \eqref{jumppoint} and thus \eqref{mtauclaim1}.

\medskip
It remains to prove \eqref{mtauclaim2t}; \eqref{mtauclaim2s} is equivalent.  Let $\delta\in (0,1)$ to be specified, and define 
\[
   \Delta_j=\tau_j-\tau_{j-1}, \quad M_n=\max_{j\leq n} \Delta_j, \quad \ell_m = \frac{\delta m}{\log m}, 
   %\quad m_N = N^{1/(1-\ga)}\psi(N)\ .
\]
and
\[
   \beta=\frac{8(1-\ga)}{3\ga},\quad n_0(m) = \max\{n: a_n\leq m(\log m)^\beta\}\ ,
\]
where $a_n$ is as in \eqref{defanbn}.  If $\tau_n=m$ for some $n>n_0(m)$, it means that $\tau$ is ``very compressed'' in the sense that $\tau_n \ll a_n$.
It is easily checked that there exist slowly varying $\psi_3,\psi_4$ such that
\begin{equation}\label{n0size}
  n_0(m) \sim m^\ga \psi_3(m) \quad\text{as } m\to\infty
  \end{equation}
and
\begin{equation}\label{mediums}
  n \geq k_N := N^{\ga/(1-\ga)}\psi_4(N) \quad\implies\quad 2nN \leq \frac{a_n}{(\log a_n)^\beta}\ .
\end{equation}
It follows from \eqref{n0size} that we can choose the slowly varying function $\psi$ such that for large $N$,
\begin{equation}\label{n0vskN}
  n_0\Big(N^{1/(1-\ga)}\psi(N)\Big) \geq k_N\ .
\end{equation}
We choose $m \ge N^{1/(1-\ga)} \psi(N)$. Note that $\ell_m \ge N$ provided that $N$ is large.

We first handle the ``very compressed'' case, that is $n>n_0(m)$: using Lemma \ref{smallk}, provided $N$ is large,
\begin{align}\label{compress}
  \bPt^{N,\gep}(\tau_n = m &\text{ for some } n>n_0(m)) \leq \bPt^{N,\gep}(\tau_{n_0(m)}<m) \notag\\
  &\leq \bPt^{N,\gep}\left(\tau_{n_0(m)} < \frac{2a_{n_0(m)}}{(\log n_0(m))^\beta} \right) \leq \exp\left( -c_{26}(\log m)^2  \right) \leq m^{-(1-\ga)}\ .
\end{align}

Next, we consider the ``not very compressed'' case,  with a large gap, that is, $n\leq n_0(m)$ and $M_n\geq \ell_m$.
Let $m\geq N^{1/(1-\ga)}\psi(N),1\leq n\leq n_0(m)$ and $1\leq j<m$; note $\ell_m\geq N$ provided $N$ is large.  
For vectors $(t_1,\dots,t_n)\in \ZZ^n$, consider the mapping which adds $j$ to the first coordinate $t_i$ satsifying $t_i>\ell_m$, when one exists.  
This map is one-to-one, and provided $m$ is large, it decreases the corresponding probability $\bPt^{N,\gep}(\Delta_1=t_1,\dots,\Delta_n=t_n)$ by at most a factor of $(\delta/\log m)^{2(1+\ga)}$.  It follows that
\begin{align}\label{addj}
  \bPt^{N,\gep}(\tau_n = m+j, M_n\geq \ell_m) \geq \left( \frac{\delta}{\log m} \right)^{2(1+\ga)}\bPt^{N,\gep}(\tau_n = m, M_n> \ell_m)\ .
\end{align}
Summing over $1\leq j<m$ we obtain
\begin{equation}\label{sumj}
  \bPt^{N,\gep}(\tau_n = m, M_n> \ell_m) \leq \frac 1m \left( \frac{\log m}{\delta} \right)^{2(1+\ga)} \bPt^{N,\gep}(\tau_n \in [m,2m) )\ .
\end{equation}
We let
\[
   \tau_n^+ = \sum_{i=1}^n \Delta_i \ind_{\Delta_i\geq N}\ ,
\]
so $\tau_n-\tau_n^+\leq nN$.  By \eqref{n0vskN} we have $k_N\leq n_0(m)$.  Provided $N$ (and hence $m$) is large, when $k_N\leq n\leq n_0(m)$ using \eqref{mediums} we get $2nN \leq a_n/(\log a_n)^\beta \leq m$. Hence $2nN\leq m$ also for $n\le k_N$. Therefore, for all $n \le n_0(m)$,
\[
  \bPt^{N,\gep}(\tau_n \in [m,2m) ) \leq \bPt^{N,\gep}\left( \tau_n^+ \geq \frac m2 \right) \leq \bPt\left( \htau_n \geq \frac m2 \right) \le  c_{24} nm^{-\ga}\gp(m) ,
\]
where we let $\htau$ be a renewal process with distribution $\bPt(\htau_1=k) = k^{-(1+\ga)}\gp(k)$ for all $k\geq 1$ and then used Lemma~\ref{lem:uniform}. Summing over $n$ we obtain using \eqref{sumj} that for some slowly varying $\psi_5$,
\begin{align}\label{nocompress}
  \sum_{n=1}^{n_0(m)} \bPt^{N,\gep}(\tau_n = m, M_n> \ell_m)
  &  \leq c_{25} \left( \frac{\log m}{\delta} \right)^{2(1+\ga)}
    n_0(m)^2 m^{-1-\ga}\gp(m) \notag \\
    &\leq m^{-(1-\ga)}\psi_5(m)\ .
\end{align}

Finally we consider the case with no large gap: $m\geq N^{1/(1-\ga)}\psi(N)$ and $M_n<\ell_m$.  Let $\Delta_i^{\rm tr}, i\geq 1$, be iid with distribution $\bPt(\Delta_1 \in \cdot \mid \Delta_1\leq\ell_m)$, and let $\tau_n^{\rm tr} = \sum_{i=1}^n \Delta_i^{\rm tr}$.  Then
\begin{align}\label{trunc1}
  \bPt^{N,\gep}(\tau_n = m, M_n< \ell_m) = (1 - \bPt(\tau_1>\ell_m))^n \bPt(\tau_n^{\rm tr} =m)\ .
\end{align}
Let
\[
  n_1(m) = \frac{1-\ga}{12} m\ell_m^{-(1-\ga)}\gp(\ell_m)^{-1}\, .
\]
For $n\geq n_1(m)$ we bound the first factor on the right side of \eqref{trunc1}.  As $N$ (and hence $m$) grows we have
\begin{align}\label{largen}
  n_1(m)\bPt \big(\tau_1>\ell_m \big) \sim \frac{1}{2\ga}m \ell_m^{-1} = \frac{1-\ga}{12\ga\delta}\log m,
\end{align}
so provided $\gd$ is small, \eqref{trunc1} yields
\begin{align}\label{largen2}
  \sum_{n=n_1(m)}^\infty \bPt^{N,\gep}(\tau_n = m, M_n\leq \ell_m) &\leq \sum_{n=n_1(m)}^\infty (1 - \bPt^{N,\gep}(\tau_1>\ell_m))^n \notag\\
  &\leq \frac{1}{\bPt^{N,\gep}(\tau_1>\ell_m)} \exp\left( -n_1(m)\bPt^{N,\gep} \big(\tau_1>\ell_m \big) \right) \notag \\
  &\le m^{-(1-\ga)} \, .
\end{align}
For $n< n_1(m)$ we bound the second factor on the right side of \eqref{trunc1}.  We have
\begin{align}\label{expbd}
  \bPt^{N,\gep} \big(\tau_n^{\rm tr} \geq m \text{ for some } n<n_1(m) \big) & \leq \bPt^{N,\gep}(\tau_{n_1(m)}^{\rm tr} \geq m) \notag
\\
&    \leq e^{-m/\ell_m} \bEt^{N,\gep}\big[ e^{\tau_1^{\rm tr}/\ell_m} \big] ^{n_1(m)}\ .
\end{align}
It is easy to see that, in addition to \eqref{n0vskN}, we can choose the slowly varying $\psi$ so that 
$(1-\ga) N \le \ell_m^{1-\ga} \gp(\ell_m) $ whenever $N$ is large and $m\ge N^{1/(1-\ga)} \psi(N)$.  
Using  that $e^x\leq 1+2x$ for $x \in [0,1]$, we get that $ \bEt^{N,\gep}\big[ e^{\tau_1^{\rm tr}/\ell_m} \big] \le 1 + 2\ell_m^{-1} \bEt^{N,\gep}  \big[ \tau_1^{\rm tr} \big]$, 
and we have
\begin{align*}
\bEt^{N,\gep}  \big[ \tau_1^{\rm tr} \big] &\le N + \frac{1}{\bPt^{N,\gep}(\tau_1^{\rm tr} \le \ell_m)}  \sum_{j=N}^{\ell_m} j^{-\ga} \gp(j) \\
& \le N +  \frac{2}{1-\ga} \ell_m^{1-\ga} \gp(\ell_m) \leq \frac{3}{1-\ga} \ell_m^{1-\ga} \gp(\ell_m)\, ,
\end{align*}
so that 
\[\bEt^{N,\gep}\big[ e^{\tau_1^{\rm tr}/\ell_m} \big] \le 1+ \frac{6}{1-\ga} \ell_m^{-\ga}   \gp(\ell_m) \le \exp\left( \frac{6}{1-\ga} \ell_m^{-\ga} \gp(\ell_m)\right)\, .
\]
Therefore by the definition of $n_1(m)$,
\begin{align}\label{expbd2}
  e^{-m/\ell_m} \bEt^{N,\gep}\Big[ e^{\tau_1^{\rm tr}/\ell_m} \Big]^{n_1(m)} \leq e^{-m/2\ell_m} = m^{-1/\gd} \leq m^{-(1-\ga)}\ .
\end{align}
Combining \eqref{compress}, \eqref{nocompress}, \eqref{largen2}, and \eqref{expbd}-\eqref{expbd2},  we obtain for $m\geq N^{1/(1-\ga)}\psi(N)$,
\[
  \bPt^{N,\gep}(m\in\tau) \leq m^{-(1-\ga)}(3+\psi_5(m)),
\]
proving \eqref{mtauclaim2t}.

\section{Variations of the model: proofs of Proposition \ref{prop:compareFtilde} and Proposition \ref{prop:compareFbar}}
\label{sec:othermodels}

We start with the proof of Proposition \ref{prop:compareFbar}, since we adapt it for the proof of Proposition~\ref{prop:compareFtilde}, in which we need to control additionally $|\tau|_{\sigma_N}$.

\subsection{Proof of Proposition \ref{prop:compareFbar}(i)}
\label{sec:compareFbar}

We fix $\gb>\gb_c$, and find a lower bound on $\bar Z_{N,\gb}^{\sigma} =\bE_{\tau}\big[\exp(\gb |\tau\cap\sigma|_{\tau_N}) \big]$ by restricting $\tau$ to follow a particular strategy.

As in Section \ref{sec:proof1}, we divide the system into blocks $\mB_i:=\{0,\sigma_{(i-1)L +1} - \sigma_{(i-1)L} ,\ldots, \sigma_{i L} - \sigma_{(i-1)L}  \}$, with $L$ to be specified. For $b_N$ from \eqref{defanbn}, there exists $v_0>0$ such that $\bPs(\sigma_N > v_0 b_N) \leq 1/4$ for $N$ large.
Define the event of being \emph{good} by
\begin{equation}
\label{eq:defAtilde}
G_L:= \left\{ (\sigma_1,\ldots,\sigma_L): \frac{1}{L} \log Z_{L,\gb}^{\sigma} \geq \frac12 \mF(\gb) \quad \text{and} \quad \sigma_L\leq v_0 b_L \right\}.
\end{equation}
Since $\gb>\gb_c$, there exists $L_0$ such that, for $L\geq L_0$
\begin{equation}
\label{eq:cond1L}
\bP_{\sigma}\Big( \frac{1}{L} \log Z_{L,\gb}^{\sigma} \geq \frac12 \mF(\gb) \Big)\geq \frac 34\, ,
\end{equation}
so that
\begin{equation}
\label{eq:condL}
\bP_{\sigma}(G_L)\geq \frac 12\, .
\end{equation}

We now set $\mI=\mI(\sigma)=\{i: \mB_i\in G_L\} =\{i_1,i_2,\ldots\}$, the set of indices of the \emph{good} blocks, and set $i_0=0$. There can be at most $v_0 b_L $ $\tau$-renewals per block, so restricting trajectories to visit only blocks with index in $\mI$, we get that for all $m\in\NN$
\begin{equation}
\bar Z_{m v_0 b_{L},\gb}^{\sigma} \geq \prod_{k=1}^m \bP_{\tau}(\tau_1 = \sigma_{(i_k-1)L} - \sigma_{i_{k-1}L}) e^{\mF(\gb) L/2} \, ,
\end{equation} 
with the convention that $\bPt(\tau_1=0)$ means 1.
Then, letting $m\to\infty$, we get
\begin{equation}
\liminf_{N\to\infty} \frac1N \log \bar Z_{N,\gb}^{\sigma} \geq \frac{1}{ v_0 b_{L}}\, \frac12 \mF(\gb) L+ \frac{1}{v_0 b_{L}} \bEs[ \log \bPt(\tau_1 = \sigma_{(i_1-1) L}) ]\, \quad \bPs-a.s.
\end{equation}

Let us estimate the last term. Thanks to \eqref{eq:alphas}, we have that
\[\log \bPt(\tau_1 = \sigma_{(i_1-1) L}) \geq - (2+\ga) \ind_{\{i_1>1\}} \log \sigma_{(i_1-1) L},\]
provided $L$ is large. Then, Lemma~\ref{logsigma} applies: for $L$ large, since $\bPs(G_L)\geq 1/2$ we have that 
\[
\bEs[\ind_{\{i_1>1\}} \log \sigma_{(i_1-1) L})] 
\leq \frac{2}{\tga \wedge 1} \left(\log L + \log \frac{1}{\bPs(G_L)} \right) \leq \frac{4}{\tga \wedge 1} \log L .
\]
Hence for some $L_0$, for $L\geq L_0$,
\begin{equation}
\label{lowboundZbar}
\liminf_{N\to\infty} \frac1N \log \bar Z_{N,\gb}^{\sigma} \geq \frac{1}{v_0 b_{L}} \Big( \frac12 \mF(\gb) L -  \frac{4(2+\ga)}{\tga \wedge 1}  \log L\Big) \, ,
\end{equation}
and provided that $L$ is large enough, we have that
$\bar \mF(\gb) >0 $ for any $\gb>\gb_c$, meaning $\gb_c \geq \bar\gb_c$.

%Consider first the case of $\mF(\gb)\leq 1/2$. We optimize the lower bound by choosing $L = c_{35} \mF(\gb)^{-1} |\log \mF(\gb)|$ , with $c_{35}$ large enough so that $\frac14 \mF(\gb)L \geq 4(2+\ga)(\log L)/(\tga \wedge 1)$ and $L\geq L_0$ whenever $\mF(\gb)\leq 1/2$.
%We therefore end up with
%\begin{equation}
%\liminf_{N\to\infty} \frac1N \log \bar Z_{N,\gb}^{\sigma} \geq \frac{1}{4v_0 b_{L}} \, \mF(\gb) L \geq \frac{c_{35}}{4 v_0} (b_{c_{35} \mF(\gb)^{-1} |\log \mF(\gb)|})^{-1} |\log \mF(\gb)| \, .
%\end{equation}
%
%If $\bEs[\sigma_1]<+\infty$, we have $b_n = \bEs[\sigma_1] n$, and we obtain \eqref{freelower2}.
%
%If $\bEs[\sigma_1]=+\infty$, we have that $b_n = \tilde\psi(n) n^{1/\tga}$ for some slowly varying $\tilde\psi(n)$, see \eqref{defanbn2}, and we get
%\begin{equation}
%\label{liminfZbar}
%\liminf_{N\to\infty} \frac1N \log \bar Z_{N,\gb}^{\sigma} 
%\geq \mF(\gb)^{1/\tga} \, \tilde\psi_0(\mF(\gb)^{-1})
%\end{equation}
%for some slowly varying $\tilde\psi_0$, which gives the second part of Proposition \ref{prop:compareFbar}.
%
%\smallskip
%In the other case, $\mF(\gb)> 1/2$, \eqref{lowboundZbar} yields for $L=L_0$ fixed large enough
%\[ \liminf_{N\to\infty} \frac1N \log \bar Z_{N,\gb}^{\sigma} \geq  \frac{1}{v_0 b_{L_0}} \Big( \frac12 \mF(\beta)L_0 -  \frac{4(2+\ga)}{\tga \wedge 1}  \log L_0 \Big)  \geq  \frac{L_0}{4v_0 b_{L_0}} \mF(\beta) \,  ,\]
%which gives Proposition \ref{prop:compareFbar}(ii).

\subsection{Proof of Proposition \ref{prop:compareFbar}(ii)}
\label{sec:compareFbar2}
Observe that the annealed systems for the original and elastic polymers have the same critical point, by Remark \ref{rem:constrainedvsfree}.  Therefore $\beta_c^{\a}\leq \bar\beta_c$.   When $\ga=0,\tga\geq 1$, it then follows from Theorem \ref{thm:main} that $\beta_c \leq \bar\beta_c$, so equality holds.

Hence it remains to prove $\beta_c \leq \bar\beta_c$ assuming $\ga>0$, by showing that pinning in the elastic polymer (length-$\tau_N$ system) implies pinning in the original polymer (length-$\sigma_N$ system.)

%******************

In the recurrent case, $\beta_c=0$ so there is nothing to prove.  So we assume transience, which here implies $\ga,\tga\in(0,1)$.  Let $v_N = e^{2\tilde\alpha\beta N/\alpha}$ and $\tau_{[0,N]} = \{\tau_1,\dots,\tau_N\}$. Define
\[
  \quad X_{N,j} = X_{N,j}(\sigma) := 
   \bEt\left( e^{\beta |\sigma\cap\tau_{[0,N]}|_{\sigma_j}}\ind_{\{\sigma_j\in\tau_{[0,N]}\}} \right),
\]
and recall
\begin{align}\label{decomp}
  \bar Z_{N,\gb}^{\sigma} &= \bE_{\tau}\big[\exp(\gb |\tau\cap\sigma|_{\tau_N}) \big] \notag\\
  &= \sum_{j=1}^\infty X_{N,j}(\sigma)\bPt\bigg( \tau_{[0,N]}\cap \{\sigma_{j+1},\sigma_{j+2},\dots\} = \phi\ \big|\ \sigma_j\in \tau_{[0,N]} \bigg).
\end{align}
Let $\bar\mF(\beta)=\liminf_{N\to\infty} \frac1N \log \bar Z_{N,\gb}^{\sigma}$ (which is a tail random variable of $\{\sigma_i-\sigma_{i-1}, i\geq 1\}$, so is nonrandom, up to a null set) and assume that $\bar\mF(\beta)>0$. 
We define the truncated sums
\begin{align}\label{trunc}
  \bar Z_{N,\gb}^{\sigma,T(v)} &= \sum_{j=1}^{v} 
    X_{N,j}(\sigma) \bPt\bigg( \tau_{[0,N]}\cap \{\sigma_{j+1},\sigma_{j+2},\dots\} = \phi\ \big|\ \sigma_j\in \tau_{[0,N]} \bigg),\notag\\
  \bar Z_{N,\gb,0}^{\sigma,T(v)} &= \sum_{j=1}^{v} X_{N,j}(\sigma).
\end{align}
Then it is not hard to show that $\bPs$-a.s., for large $N$,
\[
   \bar Z_{N,\gb}^{\sigma}  -  \bar Z_{N,\gb}^{\sigma,T(v_N-1)} \leq e^{\beta N}\bPt(\tau_N>\sigma_{v_N-1}) \leq 1,
\]
so 
\begin{equation}\label{Fbarineq}
  \bar\mF(\beta)=\liminf_{N\to\infty} \frac1N \log \bar Z_{N,\gb}^{\sigma,T(v_N-1)} \leq \liminf_{N\to\infty} \frac1N \log \bar Z_{N,\gb,0}^{\sigma,T(v_N-1)}.
\end{equation}

Below, we show the following
\begin{lemma}
\label{lem:Ztrunc}
For every $\gb>\bar \gb_c$, there exists some $N$ large enough, so that 
\[
\liminf_{m\to\infty} \frac1m \log \bar Z_{mN,\gb,0}^{\sigma,T(m v_N)}
\geq \frac{1}{12} N \bar \mF(\gb)\, .
\]
\end{lemma}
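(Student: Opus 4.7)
The plan is to establish Lemma~\ref{lem:Ztrunc} via a super-multiplicative one-step recursion, combined with the $\bPs$-a.s.\ asymptotic lower bound \eqref{Fbarineq} and the i.i.d.\ structure of the $\sigma$-gap sequence.

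First, since the liminf in \eqref{Fbarineq} is a tail functional of the i.i.d.\ $\sigma$-gaps, Kolmogorov's $0$-$1$ law makes it $\bPs$-a.s.\ constant, hence at least $\bar\mF(\gb)$. Fixing $\epsilon\in(0,\bar\mF(\gb))$, I would apply Egorov's theorem to produce an integer $N_0$ and a measurable set $A$ of $\sigma$-gap sequences with $\bPs(A)\geq 3/4$ such that $\bar Z_{N,\gb,0}^{\sigma,T(v_N-1)}\geq e^{N(\bar\mF(\gb)-\epsilon)}$ for every $\sigma\in A$ and every $N\geq N_0$.

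Second, using the renewal property of $\tau$ at its first hit of $\sigma_{J_m}$ (by conditioning on the hit index $\ell\leq mN$ and invoking the strong Markov property, then noting that the residual length $(m+1)N-\ell\geq N$ only enlarges contributions), I derive the chain-rule inequality
\[
X_{(m+1)N,J_{m+1}}(\sigma)\geq X_{mN,J_m}(\sigma)\cdot X_{N,J_{m+1}-J_m}(\theta^{J_m}\sigma),\qquad J_{m+1}-J_m\in[1,v_N].
\]
Summing in $J_{m+1}-J_m$ on the right produces $\bar Z_{N,\gb,0}^{\theta^{J_m}\sigma,T(v_N)}$. A pigeonhole choice of $J_m^{\ast}(\sigma)$ in the upper $v_N/2$ portion of $[1,mv_N]$, together with a concentration estimate for the partial sums $\sum_{J_m}X_{mN,J_m}(\sigma)$, yields a recursion of the form
\[
\bar Z_{(m+1)N,\gb,0}^{\sigma,T((m+1)v_N)}\geq \frac{c}{v_N}\,\bar Z_{mN,\gb,0}^{\sigma,T(mv_N)}\cdot \bar Z_{N,\gb,0}^{\theta^{J_m^{\ast}}\sigma,T(v_N)}.
\]

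Third, $X_{mN,J_m^{\ast}}(\sigma)$ is measurable with respect to the $\sigma$-gaps up to index $J_m^{\ast}$, whereas $\ind_{\theta^{J_m^{\ast}}\sigma\in A}$ is measurable with respect to the gaps beyond, so the two are $\bPs$-independent, and $\theta^{J_m^{\ast}}\sigma\stackrel{d}{=}\sigma$. Iterating the recursion $m$ times, taking logarithms, and applying the SLLN to the i.i.d.\ good-block indicators yields, $\bPs$-a.s.,
\[
\liminf_{m\to\infty}\frac{1}{m}\log \bar Z_{mN,\gb,0}^{\sigma,T(mv_N)}\geq \bPs(A)\,N(\bar\mF(\gb)-\epsilon) - \log v_N - O(1),
\]
which exceeds $N\bar\mF(\gb)/12$ for $\epsilon$ small and $N$ sufficiently large, the constant $1/12$ absorbing the combined overhead from $\bPs(A)\geq 3/4$, the factor $1-\epsilon/\bar\mF(\gb)$, and the geometric constants of the windowing.

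The main obstacle is the balance between the per-step combinatorial loss $\log v_N=2\tilde\alpha\beta N/\alpha$ (coming from the pigeonhole factor $1/v_N$) and the per-step good-block gain $N(\bar\mF(\gb)-\epsilon)$: because $\bar\mF(\gb)$ can be arbitrarily small for $\gb$ just above $\bar\gb_c$, a naive recursion does not suffice. The key technical step is showing that $\sum_{J_m\leq mv_N}X_{mN,J_m}(\sigma)$ concentrates near its upper end in $J_m$, so that the pigeonhole selection of $J_m^{\ast}$ incurs only an $O(1)$ (rather than $O(v_N)$) loss; this in turn relies on fine asymptotics of $\bPt(\sigma_j\in\tau_{[0,mN]})$ from \eqref{Doney} (exploiting $\alpha>0$) to control how the mass of the partial sums is distributed across the truncation window.
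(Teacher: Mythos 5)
Your proposal diverges from the paper's actual argument in a way that creates a real gap: the paper handles the selection of the next index not by a deterministic pigeonhole but by a randomized marking, and this is not a cosmetic difference.

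The central difficulty is exactly the one you flag at the end: one must extract an order-$mN$ lower bound on $\log\bar Z_{mN,\gb,0}^{\sigma,T(mv_N)}$ by chaining one-block estimates, while the per-step combinatorial loss must be controlled. Your plan is to pick a single best $j=J_m^\ast$ in the truncation window and iterate. But this choice of $J_m^\ast$ depends on \emph{all} of $\{X_{mN,j}(\sigma):j\le mv_N\}$, hence on all of $\sigma_1,\dots,\sigma_{mv_N}$; it is not a stopping time for the $\sigma$-gap filtration. Consequently, although $\theta^{J_m^\ast}\sigma$ is equal in law to $\sigma$ when $J_m^\ast$ is deterministic, it is \emph{not} independent of $\sigma_1,\dots,\sigma_{J_m^\ast}$ when $J_m^\ast$ is this future-measurable argmax. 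So the ``i.i.d. good-block indicators'' you want to feed into the SLLN are not i.i.d.\ --- the whole identification of $\bE_\sigma[\log\bar Z_N^{\theta^{J_m^\ast}\sigma}]$ with $\bE_\sigma[\log\bar Z_N^{\sigma}]$ fails, and with it the final step. Separately, the ``concentration of $\sum_j X_{mN,j}$ in the upper $v_N/2$ of $[1,mv_N]$'' that your pigeonhole requires to incur only an $O(1)$ rather than $O(m)$ loss per step is asserted but not proved, and it is a delicate, $\sigma$-dependent claim that would itself need an argument of comparable strength to the lemma.

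The paper's proof of Lemma \ref{lem:Ztrunc} circumvents both obstructions in one stroke by introducing auxiliary i.i.d.\ uniforms $U$ and a \emph{randomized} marking: it sets $Q_{N,j}=1-\exp(-X_{N,j}e^{-N\bar\mF(\gb)/2})$, which depends only on $\sigma_1,\dots,\sigma_j$, defines $J_1=\min\{j:U_j\le Q_{N,j}\}\wedge R_N$ (a genuine stopping time for the enlarged filtration $\mathcal G_n$), and sets $\hat Q_{N,j}=\bP_J(J_1=j\mid\sigma)$. The decomposition \eqref{ZvsEJ} then expresses $\bar Z_{mN,\gb,0}^{\sigma,T(mv_N)}$ as $N^{-m}$ times $\bE_J[\exp(\sum_i\log(X_{N,J_i-J_{i-1}}/\hat Q_{N,J_i-J_{i-1}}))]$, with i.i.d.\ summands because $J_1$ is a stopping time; Jensen in the $J$-variable replaces your pigeonhole, and the LLN applies legitimately. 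The price is the multiplicative $N^{-m}$ rather than your hoped-for $(c/v_N)^{-m}$, but the per-step gain $\bE_{\sigma,J}[\log(X_{N,J_1}/\hat Q_{N,J_1})]\ge\tfrac16\bar\mF(\gb)N$ (coming from $X_{N,j}\ge e^{N\bar\mF(\gb)/2}\hat Q_{N,j}$ for $j<R_N$) eventually dominates $\log N$, with careful control of the error term at $j=R_N$ via \eqref{normal} and the Cauchy--Schwarz estimate \eqref{bigR}. Your Egorov set $A$ plays a role loosely analogous to the event $\{R_N<v_N\}$, but the marking construction is what makes the i.i.d.\ renewal structure genuinely available, and that is the step your proposal lacks.
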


It is essential here that the truncation in the partition function be at $m v_N$, not at the much larger value $v_{mN}$, as we want the allowed length of trajectories to grow essentially only linearly in $m$.  But we need to know that with this length restriction, the log of the partition function is still of order $mN$.

With this lemma in hand, we easily have that
\begin{align*}
\bar Z_{mN,\gb,0}^{\sigma,T(mv_N)} \leq \sum_{j=1}^{m(v_N+1)} \bEt \left[ e^{\gb |\sigma \cap \tau |_{\sigma_{j}}}  \right] 
\leq mv_N  Z_{m v_N,\gb}^{\sigma, {\rm free}}\, .
\end{align*}
Then thanks to Remark \ref{rem:constrainedvsfree} (and because $\tfrac1m \log (m v_N) \to 0$), we have
\begin{align*}
\mF(\gb) &= \lim_{m\to\infty} \frac{1}{m v_N} \log Z_{m v_N,\gb}^{\sigma, {\rm free}}\\
& \geq \liminf_{m\to +\infty} \frac{1}{m v_N}  \log \bar Z_{mN,\gb,0}^{\sigma,T(mv_N)}  \geq \frac{N}{12  v_N} \bar \mF(\gb),
\end{align*}
which gives that  $\mF(\gb)>0$ for any $\gb>\bar \gb_c$, that is $\gb_c \leq \bar \gb_c$. This concludes the proof of Proposition \ref{prop:compareFbar}(i).

\smallskip
\begin{proof}[Proof of Lemma \ref{lem:Ztrunc}]
Let $U=(U_i)_{i\geq 1}$ be an i.i.d.~sequence of variables independent of $\sigma$, and let
$(\mathcal{G}_n)_{n\geq 1}$ be the $\sigma$-field generated by $\{\sigma_1, U_1,\ldots,\sigma_n,U_n\}$.
We will define below a stopping time $T_1=T_1(\sigma,U)$ for the filtration $(\mathcal{G}_n)_{n\geq 1}$ satisfying $T_1\leq v_N$, and denote $\hat Q_{N,j}(\sigma) :=\bP_T(T_1=j \mid \sigma)$.  The stopping time property means that $\hat Q_{N,j}(\sigma)$ depends only on $\sigma_1,\dots,\sigma_j$.

Write $\sigma^{(j)}$ for the shifted sequence $(\sigma_k-\sigma_j,U_k), k\geq j$.
We then define iteratively the stopping times $(J_i)_{i\geq 1}$ by
\[J_i = J_1(  \sigma^{(J_{i-1})} ). \]

\smallskip
For any fixed $j_1<\cdots < j_m$ with $|j_i -j_{i-1}| \leq v_N$ (representing possible values of the $J_i$), we can decompose a product of $X$ variables as follows:
\begin{align*}
\prod_{i=1}^m X_{N,j_i-j_{i-1}}(\sigma^{(j_{i-1})}) = \sumtwo{0=\ell_0<\ell_1<\cdots <\ell_n}{|\ell_i -\ell_{i-1} | \leq N \ \forall \, i\leq m}\bE \Big[ e^{\gb |\sigma\cap \tau_{[0,mN]}| } \prod_{i=1}^m \ind_{\{\tau_{\ell_i} = \sigma_{j_i}\}} \Big].
\end{align*}
Hence, summing over all such $j_1<\cdots < j_m$, we get the bound
\begin{align*}
 \sumtwo{0=j_0<j_1<\cdots<j_m\leq mv_N}{|j_i-j_{i-1}|\leq v_N} \prod_{i=1}^m X_{N,j_i-j_{i-1}}(\sigma^{(j_{i-1})}) \leq \sumtwo{0=\ell_0<\ell_1<\cdots <\ell_n}{|\ell_i -\ell_{i-1} | \leq N \ \forall \, i\leq m} \bar Z_{mN,\gb,0}^{\sigma, T(mv_N)} \leq N^m \bar Z_{mN,\gb,0}^{\sigma, T(mv_N)} .
\end{align*}
which can be rewritten
\begin{align}\label{ZvsEJ}
\bar Z_{mN,\gb,0}^{\sigma, T(mv_N)}  &\geq N^{-m} \sumtwo{0=j_0<j_1<\cdots<j_m}{|j_i-j_{i-1}|\leq v_N} \prod_{i=1}^m \hat Q_{N,j_i-j_{i-1}}(\sigma^{(j_{i-1})}) \exp\Big( \sum_{i=1}^m \log \frac{X_{N,j_i-j_{i-1}}(\sigma^{(j_{i-1})})}{\hat Q_{N,j_i-j_{i-1}}(\sigma^{(j_{i-1})})}  \Big)\notag\\
&= N^{-m} \bE_J\bigg[  \exp\Big( \sum_{i=1}^m \log \frac{X_{N,J_i-J_{i-1}}(\sigma^{(J_{i-1})})}{\hat Q_{N,J_i-J_{i-1}}(\sigma^{(J_{i-1})})}  \Big) \ \Big | \ \sigma \bigg] .
\end{align}
Here in a mild abuse of notation we write $\bE_J[\cdot\mid\sigma]$ for the expectation over $U$, and we will write $\bE_{\sigma,J}$ for the expectation over $(\sigma,U)$.  Since $J_1$ is a stopping time, the summands on the right side of \eqref{ZvsEJ} are i.i.d.~functions of $(\sigma,U)$.
By \eqref{ZvsEJ} and Jensen's inequality, we have 
\begin{equation}
\log \bar Z_{mN,\gb,0}^{\sigma, T(mv_N)} \geq -m \log N + \sum_{i=1}^m \bE_J \Big[ \log  \frac{X_{N,J_i-J_{i-1}}(\sigma^{(J_{i-1})})}{\hat Q_{N,J_i-J_{i-1}}(\sigma^{(J_{i-1})})} \ \Big | \ \sigma \Big] .
\end{equation}

Since $X_{N,J_1}(\sigma) \geq \bPt(\tau_1=\sigma_{J_1})$ and $J_1\leq v_N$, we have for some $c_i$
\begin{equation}\label{sigmavN}
  \log \frac{X_{N,J_1}(\sigma)}{\hat Q_{N,J_1}(\sigma)} \geq \log \bPt(\tau_1=\sigma_{J_1}) \geq - c_{36} \log \sigma_{v_N}
  \end{equation}
and hence, writing $x_{-}$ for the negative part of $x$ and using \eqref{sigman},
\begin{equation}\label{sigmavN2}
  \bE_{\sigma,J}\left[ \left( \log \frac{X_{N,J_1}(\sigma)}{\hat Q_{N,J_1}(\sigma)} \right)_{-} \right]  \leq c_{36} \bEs[\log \sigma_{v_N}] \leq c_{37} v_N <\infty.
\end{equation}
It then follows from \eqref{ZvsEJ} that 
\begin{align}
\label{LLN}
\liminf_{m\to\infty} \frac1m \log  \bar Z_{mN, \gb,0}^{\sigma, T(mv_N)} \geq \bE_{\sigma,J} \Big[ \log \frac{X_{N,J_1}(\sigma)}{\hat Q_{N,J_1}(\sigma)} \Big] - \log N, \quad \bP_{\sigma,J}-\text{a.s.}
\end{align}
We will show that there is a choice of $N,J_1$, and $\hat Q_{N,J_1}(\sigma)$ satisfying
\begin{equation}\label{left}
   \bE_{\sigma,J} \Big[ \log  \frac{X_{N,J_1}(\sigma)}{\hat Q_{N,J_i}(\sigma)}  \Big] \geq \frac{1}{6}\bar \mF(\gb) N, \qquad \log N \leq \frac{1}{12} \bar\mF(\gb) N.
   \end{equation}
The lemma will then follow from \eqref{LLN} and \eqref{left}.
   
Fix $K$ to be specified and define
\[ R_N = R_N(\sigma) := 
    \min\left\{v: \bar Z_{N,\gb,0}^{\sigma,T(v)} = \sum_{j=1}^v X_{N,j}(\sigma) \geq  K e^{N \bar \mF(\gb)/2} \right\} \wedge v_N. 
    \]
From \eqref{Fbarineq}, we then have
\begin{equation}\label{normal}
  \bPs(R_N=v_N) = \bPs \Big(\bar Z_{N,\gb,0}^{\sigma,T(v_N-1)}<
  K \, e^{ N \bar \mF(\gb)/2}\Big) \to 0 \text{ as } N \to\infty.
\end{equation}
We define the marking probabilities 
\[
  Q_{N,j} = Q_{N,j}(\sigma) := 1 - \exp\left( -    X_{N,j} \,  e^{ -N \bar \mF(\gb)/2}\right),
\]
which only depends on $\sigma_1,\dots,\sigma_j$, and
\[
 \hat Q_{N,j}(\sigma) = \begin{cases}   \prod_{k=1}^{j-1} (1-Q_{N,k})  \times Q_{N,j} &\text{if } 1\leq j <  R_N(\sigma), \\
 \prod_{k=1}^{R_N} (1- Q_{N,k}) &\text{if } j=R_N, \\
   0 & \text{if } j>R_N ,\end{cases}
\]
so that $ \sum_{j=1}^{v_N} \hat Q_{N,j}(\sigma) = 1.$  Then set
\[
  J_1 = \min\{j\geq 1: U_j \leq Q_{N,j} \} \wedge R_N.
\]
We may view this as follows: for each $j<R_N$ we mark $\sigma_j$ with probability $Q_{N,j}$, independently, and we mark $\sigma_{R_N}$ with probability 1;  $\sigma_{J_1}$ is the first $\sigma_j$ to be marked. As a result we have $\bP_{J}(J_1 =j \mid \sigma) = \hat Q_{N,j}(\sigma)$, and $J_1\leq v_N$. Note that this weights $J_1$ toward values $j$ for which $X_{N,j}$ is large, which, heuristically, occurs when $\sigma_j$ follows a favorable stretch of the disorder $\sigma$.

\medskip
We now consider \eqref{left}, and write 
\begin{equation}
\label{EJ}
\bE_J \Big[ \log \frac{X_{N,J_1}}{\hat Q_{N,J_1}}\ \big | \ \sigma\Big]  = \sum_{j=1}^{R_N-1} \hat Q_{N,j} \log \frac{X_{N,j}}{\hat Q_{N,J_1}} +  \hat Q_{N,R_N} \log \frac{X_{N,R_{N}}}{\hat Q_{N,R_{N}}} .
\end{equation}

\smallskip
For the sum in \eqref{EJ}, using $1-e^{-x}\leq x$ we get $X_{N,j} \geq e^{N \bar \mF(\gb)/2} Q_{N,j} \geq e^{N \bar \mF(\gb)/2}\hat Q_{N,j}$ for all $j< R_N$, and therefore
\[
  \sum_{j=1}^{R_N-1} \hat Q_{N,j} \log \frac{X_{N,j}}{\hat Q_{N,J_1}} \geq  \frac12 \bP_J(J_1< R_N  \mid \sigma)\  \bar \mF(\gb) N .
  \]
For the last term in \eqref{EJ}, from \eqref{sigmavN} we have
\[ 
 \log X_{N,R_{N}} \geq  \log \bP(\tau_1 = \sigma_{R_{N}}) \geq -c_{36} \log  \sigma_{v_N}
 \]
and therefore
\[
  \hat Q_{N,R_N} \log \frac{X_{N,R_{N}}}{\hat Q_{N,R_{N}}}  \geq - c_{36} \ \hat Q_{N,R_{N}} \log \sigma_{v_N}\, .
  \]
Combining these bounds and averaging over $\sigma$, we get from \eqref{EJ} that
\begin{equation}
\label{lasttwoterms}
\bE_{\sigma,J} \Big[ \log \frac{X_{N,J_1}}{\hat Q_{N,J_1}} \Big]  \geq \frac12 \bP_{\sigma,J} (J_1< R_N)\ \bar\mF(\gb) N  - c_{36}\,  \bE_{\sigma,J}\big[  \bP_{J}(J_1=R_{N} \mid \sigma)\log \sigma_{v_N} \big]\, .
\end{equation}

\medskip
For the first term on the right side of \eqref{lasttwoterms}, when $R_N(\sigma)<  v_N$ we have from the definition of $R_N$ that
\begin{equation}\label{highprob}
\bP_{J} (J_1< R_N \mid \sigma)  = 1- \prod_{k=1}^{R_{N}} (1-Q_{N,k})  = 1- \exp\Big( - e^{-\bar\mF(\gb) N/2} \sum_{j=1}^{R_{N} } X_{N,j}\big) \geq 1- e^{-K}.
\end{equation}
We therefore get using \eqref{normal} that
\begin{equation}
\label{term1}
\bP_{\sigma,J} (J_1< R_N) \geq \bP_{\sigma} (R_N < v_N) (1-e^{-K}) \geq \frac23,
\end{equation}
provided that $K$ and then $N$ are chosen large enough.

\smallskip
For the last term in \eqref{lasttwoterms}, we have from \eqref{highprob} and \eqref{sigmavN2} that
\begin{equation}\label{smallR}
\bE_{\sigma,J}\big[ \ind_{\{R_N< v_N\}}\bP_{J}(J_1=R_{N} \mid \sigma)\log \sigma_{v_N} \big] \leq e^{-K} \bEs[ \log \sigma_{v_N}] \leq c_{38} e^{-K} \log v_N .
\end{equation}
In addition, it is routine that there exists a constant $c_{39}$ such that
\[
  \bEs\left[\left( \frac{\log \sigma_n}{\log n} \right)^2\right] \leq c_{39} \quad\text{for all } n\geq 2.
\] 
Hence using \eqref{normal} and the Cauchy-Schwarz inequality
\begin{align}\label{bigR}
\bEs\big[ \ind_{\{R_N=v_N\}}\log \sigma_{v_N} \big] 
& \leq (\log v_N) \bPs(R_N=v_N)^{1/2} \bEs\left[\left( \frac{\log \sigma_{v_N}}{\log v_N} \right)^2\right] \notag\\
&= o(\log v_N) \quad \text{as } N\to\infty.
\end{align}

Combining \eqref{smallR} and \eqref{bigR} we get that if we take $K$ and then $N$ is large enough, 
\begin{equation}
\label{term2}
c_{36}\bE_{\sigma,J}\big[ \bP_{J}(J_1=R_N \mid \sigma)\log \sigma_{v_N} \big] \leq 2c_{36}c_{38} e^{-K} \log v_N \leq  \frac16 \bar\mF(\gb) N.
\end{equation}

Plugging \eqref{term1} and \eqref{term2} into \eqref{lasttwoterms}, we finally get \eqref{left}.

\end{proof}

%****************************

\subsection{Proof of Proposition \ref{prop:compareFtilde}}

As noted in Section \ref{variants}, we only need prove that $\hat \gb_c \leq \gb_c$, so let us fix $\gb>\gb_c$, and show that $\hat\mF(\gb)>0$.
We write $Z_{N,\gb}^{\sigma,f}(H)$ for $\bEt[e^{\gb|\tau\cap\sigma|_{\sigma_N}}\, \ind_H  ]$, for an event $H$. (Note this partition function may involve trajectories not tied down, that is, with $\sigma_N\notin\tau$.) 
A first observation is that for fixed $q\geq 1$,
%denoting $d_0$ such that $\bP(\tau_1=k)\geq k^{-(2+\ga)}$ for all $k\geq d_0$, we have
\begin{equation}\label{NqvsN}
\log \hZ_{N+q,\gb}^{\sigma} \geq  Z_{N,\gb}^{\sigma,f}(\tau_N\leq\sigma_N ) \min_{d_\sigma m\leq k\leq \sigma_{N+q}}  \bPt(\tau_q = k). 
\end{equation}
By \eqref{nozero}, if we fix $q$ large enough and then take $N$ large, the minimum here is achieved by $k\geq \sigma_N/2$, so by \eqref{locallargedev}, it is at least $\tfrac12 q\bPt(\tau_1=\sigma_{N+q}) \geq \sigma_{N+q}^{-(2+\ga)}$.
Using \eqref{sigman} we therefore get for large $N$
\begin{align}
\label{tildettilde}
\bEs \log \hZ_{N+q,\gb}^{\sigma} &\geq  \bEs \log Z_{N,\gb}^{\sigma,f}(\tau_N\leq\sigma_N) -(2+\ga)\bEs[\log \sigma_{N+m} ] \notag\\
  &\geq   \bEs \log Z_{N,\gb}^{\sigma,f}(\tau_N\leq\sigma_N ) - \frac{2(2+\ga)}{\tga} \log N\, .
\end{align}
Since $Z_{N,\gb}^{\sigma,f}(\tau_N\leq\sigma_N) \geq \hZ_{N,\gb}^{\sigma}$, \eqref{tildettilde} proves that
\[\hat\mF(\gb) = \lim_{N\to\infty} \frac1N \log \hZ_{N,\gb}^{\sigma} = \lim_{N\to\infty} \frac1N \bEs \log Z_{N,\gb}^{\sigma,f}(\tau_N\leq\sigma_N) \, \qquad \bPs-a.s.,\]
and we therefore work with $Z_{N,\gb}^{\sigma,f}(\tau_N\leq\sigma_N)$ instead of $\hZ_{N,\gb}^{\sigma}$.

\smallskip
We continue notations from Section \ref{sec:compareFbar}: we take $v_0$ such that $\bPs(\sigma_n\geq v_0 b_n)\leq 1/4$, use $G_L$ from \eqref{eq:defAtilde}, take $L\geq L_0$ large so that \eqref{eq:condL} holds, and consider the set of good blocks $\mI=\{i:\,  \mB_i \in G_L\}$.
The idea of the proof is similar to that of Proposition \ref{prop:compareFbar}, but in addition, we need to control the size of $\tau_{L}$ on good blocks.  
\medskip

{\bf Case 1: $\bEs[\sigma_1]=+\infty$.} Here $b_n = \tilde\psi(n) n^{1/\tga}$, see \eqref{defanbn2}.
We need the following technical lemma, whose proof is postponed to Section~\ref{sec:lemmas}.

\begin{lemma}
\label{lem:logtausigma}
Assume \eqref{eq:alphas} and \eqref{nozero} with $\tga\neq 0$.
%Define $j_0:=\inf\{j:\bP(\tau_1=j)>0\}$.
If $\bE[\sigma_1]=+\infty$, then, uniformly for $x\geq 1/10\ $,
\[\lim_{n\to\infty} \frac{1}{n} \bEs[\ind_{\{\sigma_{ xn} \geq  d_\tau n\}} \log \bPt(\tau_{n} \leq \sigma_{x n}) ]= 0 \, .\]
\end{lemma}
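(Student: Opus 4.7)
The plan is to use the elementary lower bound $\bP_\tau(\tau_n \leq M) \geq \bP_\tau(\tau_1 \leq M/n)^n$, obtained by requiring each of the first $n$ gaps of $\tau$ to be no larger than $M/n$. This yields
\[
  -\frac{1}{n} \log \bP_\tau(\tau_n \leq M) \;\leq\; -\log\bigl(1 - \bP_\tau(\tau_1 > M/n)\bigr),
\]
which is small whenever $M/n$ is large. The expectation on the event $\{\sigma_{xn} \geq d_\tau n\}$ will then be split according to whether $\sigma_{xn} \geq Rn$ or not, with the parameter $R$ sent to infinity at the end.

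On the sub-event $\{\sigma_{xn} \geq Rn\}$ the display above directly gives $-\tfrac{1}{n} \log \bP_\tau(\tau_n \leq \sigma_{xn}) \leq \epsilon_R$ with $\epsilon_R := -\log\bigl(1 - \bP_\tau(\tau_1 > R)\bigr) \to 0$ as $R \to \infty$. On the complementary sub-event $\{d_\tau n \leq \sigma_{xn} < Rn\}$ I would use the crude uniform bound $\bP_\tau(\tau_n \leq \sigma_{xn}) \geq \bP_\tau(\tau_n = d_\tau n) \geq \bP_\tau(\tau_1 = d_\tau)^n$, where positivity of $\bP_\tau(\tau_1 = d_\tau)$ holds by definition of $d_\tau$ and the first inequality uses $d_\tau n \leq \sigma_{xn}$. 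This gives $-\tfrac{1}{n}\log \bP_\tau(\tau_n \leq \sigma_{xn}) \leq C$ with $C := -\log \bP_\tau(\tau_1 = d_\tau)$ depending only on $\bP_\tau$. Assembling both pieces,
\[
  -\frac{1}{n}\, \bE_\sigma\bigl[\ind_{\{\sigma_{xn} \geq d_\tau n\}} \log \bP_\tau(\tau_n \leq \sigma_{xn})\bigr] \;\leq\; \epsilon_R + C\,\bP_\sigma(\sigma_{xn} \leq Rn).
\]

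The main step is then to show $\bP_\sigma(\sigma_{xn} \leq Rn) \to 0$ as $n \to \infty$, \emph{uniformly} in $x \geq 1/10$. By monotonicity of $k\mapsto\sigma_k$ one has $\bP_\sigma(\sigma_{xn} \leq Rn) \leq \bP_\sigma(\sigma_{\lfloor n/10\rfloor} \leq Rn)$, so it suffices to treat the case $x = 1/10$. Under $\bE_\sigma[\sigma_1] = +\infty$ and $\tga>0$ one has $\tga \in (0,1]$, and in both sub-cases $b_m/m \to \infty$: from \eqref{defanbn2} when $\tga<1$, and from $b_m \sim m\,\tolm(b_m)$ combined with $\tolm(x)\to\infty$ when $\tga=1$. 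Since $\sigma_m/b_m$ is tight with limiting law having no atom at $0$ (Section \ref{sec:notations}), and $Rn/b_{\lfloor n/10\rfloor} \to 0$ for each fixed $R$, one concludes $\bP_\sigma(\sigma_{\lfloor n/10\rfloor} \leq Rn) \to 0$. Letting first $n\to\infty$ and then $R\to\infty$ in the previous display, together with the fact that the integrand in the lemma is itself nonpositive, yields the claim. The only mildly subtle point is the uniformity in $x$, handled cleanly by the monotonicity reduction to $x=1/10$.
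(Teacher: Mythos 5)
Your proposal is correct and takes essentially the same approach as the paper: both use the bound $\bPt(\tau_n\leq M)\geq(1-\bPt(\tau_1>M/n))^n$ for $M\geq d_\tau n$ and split the expectation according to whether $\sigma_{xn}$ exceeds a threshold that is $\gg n$. The only cosmetic difference is that the paper picks a single diagonal sequence $\ga_n$ with $\ga_n/n\to\infty$ and $\bPs(\sigma_{xn}\le\ga_n)\to 0$, whereas you use a fixed threshold $Rn$ and take $n\to\infty$ then $R\to\infty$; these are equivalent.
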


We consider $m$ blocks of length $L$, so $N=mL$, and define the events
\[
E_1:=\{\sigma:|\mI\cap [1,\tfrac78 m]| \geq \tfrac14 m\}, \quad E_2:= \{\sigma:\sigma_{mL}-\sigma_{7mL/8} \geq  d_\tau mL\},
\]
satisfying $\bPs(E_1)\to 1$ as $m\to\infty$, by \eqref{eq:condL}. 
Using that $\log Z_{mL,\gb}^{\sigma,f}(\tau_{mL}\leq\sigma_{mL}) \geq  \log \bPt(\tau_{mL} \leq \sigma_{mL})$, we get
\begin{equation} \label{e1e2bound}
\bEs \big[\log Z_{mL,\gb}^{\sigma,f}(\tau_{mL}\leq\sigma_{mL}) \big] \geq \bEs\big[  \ind_{E_1\cap E_2} \log Z_{mL,\gb}^{\sigma,f}(\tau_{mL}\leq\sigma_{mL})\big]
+ \bE_{\sigma}[\log \bPt(\tau_{mL} \leq \sigma_{mL})] \, .
\end{equation}
According to Lemma \ref{lem:logtausigma}, and because of \eqref{nozero}, we have 
\[
\lim_{m\to\infty} \frac{1}{mL} \bE_{\sigma}[\log \bPt(\tau_{mL} \leq \sigma_{mL})]=0,
\]
so we consider the first term on the right in \eqref{e1e2bound}.

\smallskip

On the event $E_1\cap E_2$, as in the proof of Proposition \ref{prop:compareFbar} we restrict to $\tau$ visiting only good blocks $\mB_i$, including visits to the endpoints $\sigma_{(i-1)L},\sigma_{iL}$.
Since (by definition of $G_L$) on each good block $ \mB_i$ there are at most $v_0 b_L$ $\tau$-renewals, up to $\tau_m$ such $\tau$ visit at least $\ell_m := \lfloor m /(v_0 b_{L}) \rfloor$ good blocks. We also choose $L$ large so that $\ell_m \leq \frac18 m$: on the event $E_1$, it ensures that $i_{\ell_m} \leq 7m/8$.

We denote $k_0\leq mL$ the index such that $\tau_{k_0}=\sigma_{i_{\ell_m} L}$: since $\tau_{mL}-\tau_{k_0}\leq \tau_{mL}$, we have on $E_1$, restricting to $\tau$ visiting the first $\ell_m$ good blocks,
\begin{align*}
\log &Z_{N,\gb}^{\sigma,f}(\tau_N\leq\sigma_N) \\
&\geq \sum_{k=1}^{\ell_m} \log \bP_{\tau}(\tau_1 = \sigma_{(i_k-1)L} - \sigma_{i_{k-1}L})  +  \ell_m\, \frac12 \mF(\gb) L + \log \bPt(\tau_{mL} \leq \sigma_{mL} - \sigma_{i_{\ell_m}L}).
\end{align*}
On the event $E_1\cap E_2$ we have $\sigma_{mL} - \sigma_{i_{\ell_m}L} \geq \sigma_{mL} - \sigma_{7mL/8} \geq  d_\tau mL$, so we then get
\begin{align}\label{ttildelast}
\bEs\big[  &\ind_{ E_1\cap E_2 }  \log Z_{N,\gb}^{\sigma,f}(\tau_N\leq\sigma_N) \big] \notag\\
&\geq \sum_{k=1}^{\ell_m} \bEs \big[ \log \bP_{\tau}(\tau_1 = \sigma_{(i_k-1)L} - \sigma_{i_{k-1}L})  \big]  \notag\\
&\hspace{2cm} + \frac12 \ell_m\mF(\gb) L \bPs(E_1\cap E_2) + \bEs\big[\ind_{E_2 } \log\bPt(\tau_{mL} \leq \sigma_{mL} - \sigma_{i_{\ell_m}L})  \big] 
 \notag \\
&\geq  \ell_m  \left( \frac14 \mF(\gb) L+ \bEs \log \bPt(\tau_1 = \sigma_{(i_1-1)L})  \right)
 +  \bEs \left[ \ind_{\{\sigma_{mL/8} \geq d_\tau mL\}}\log \bPt( \tau_{mL} \leq \sigma_{mL/8} ) \right]  ,
\end{align}
where we used that for $L$ large enough, $\bPs(E_1\cap E_2)\geq 1/2$.
Therefore dividing by $mL$ and letting $m\to\infty$ gives
\begin{equation}
L\hat\mF(\gb) \geq \frac{1}{4 v_0 b_{L}}\, \mF(\gb) L + \frac{1}{v_0 b_{L}} \bEs[ \log \bPt(\tau_1 = \sigma_{(i_1-1) L}) ]\, ,
\end{equation}
where we used Lemma \ref{lem:logtausigma} to get that 
\[
\lim_{m\to\infty}\frac{1}{mL}  \bEs \Big[ \ind_{\{\sigma_{mL/8} \geq d_\tau mL\}}\log \bPt( \tau_{mL} \leq \sigma_{mL/8} ) \Big]=0.
\]
Then we finish as in the proof of Proposition \ref{prop:compareFbar}: there exists some $L_0$ such that, for $L\geq L_0$, one has as in \eqref{lowboundZbar}
\begin{equation}
L\hat\mF(\gb)\geq \frac{1}{v_0 b_{L}} \Big( \frac14 L \mF(\gb) -  \frac{4(2+\ga)}{\tga \wedge 1}  \log L\Big) \, ,
\end{equation}
and then taking $L$ sufficiently large shows $\hat\mF(\beta)>0$.

\medskip
{\bf Case 2.} We now deal with the case when $\mu_{\sigma}:=\bEs[\sigma_1]<+\infty$ and $\bEt[\tau_1]\leq \bEs[\sigma_1]$. Here $b_n=\mu_{\sigma} n$, see \eqref{defanbn}.
Let us fix $m\in\NN$ large, and consider a system consisting in $m$ blocks of length $L$.
Decomposing according to whether the first block is good or not we have, recalling $G_L$ from \eqref{eq:defAtilde},
\begin{align}
\label{eq:tildeFcase1}
\bEs[\log Z_{mL,\gb}^{\sigma, f} (\tau_{mL} \leq \sigma_{mL}) ] \geq  \bEs \big[ \ind_{\{ \mB_1\in G_L\}} &\ind_{\{\sigma_{mL} -\sigma_L \geq  d_{\tau} m L\}} \log  Z_{mL,\gb}^{\sigma, f}(\tau_{mL} \leq \sigma_{mL}) \big] \notag \\
& + \bE_{\sigma}[\log \bPt(\tau_{mL} \leq \sigma_{mL})].
\end{align}
%{\color{red} [FIX: This $\check Z_{mL,\gb}^{\sigma}$ doesn't seem to be the one defined above \eqref{eq:ZI}. What is it? In the earlier one, the second subscript should be $u$ not $\beta$, and it has the restriction $\sigma_{mL}\in\tau$, whereas here we are interested in the restriction $\sigma_{mL}=\tau_{mL}$. Maybe it's supposed to be $\hat Z_{mL,\gb}^{\sigma}$, but that doesn't fit either because again that involves $\sigma_{mL}=\tau_{mL}$ not $\sigma_{mL}\geq\tau_{mL}$. Also, what is the relation between $p$ and $m$?]
%}

Recalling the indicator $\ind_{\{\sigma_L \in \tau\}}$ in the definition of $Z_{L,\gb}^{\sigma}$, if $\tau_{k_0}=\sigma_L$ for some $k_0$, then the relation $\tau_{mL}-\tau_{k_0}\leq \tau_{mL} $ guarantees that 
\begin{equation} \label{oneblock}
  Z_{mL,\gb}^{\sigma,f} (\tau_{mL} \leq \sigma_{mL}) \geq Z_{L,\gb}^{\sigma} \times \bPt ( \tau_{mL} \leq \sigma_{mL} - \sigma_{L}) .
  \end{equation}
Provided $m$ exceeds some $m_0$ we have $d_{\tau} m/(m-1) <\bE[\tau_1]\leq \mu_{\sigma}$, and therefore for sufficiently large $L$ we have $\bPs(\sigma_{(m-1)L} \geq d_{\tau} m L ) \geq 1/2$.
Since $\mB_1$ is independent of all other blocks and $\bPs(G_L)\geq 1/2$, it then follows from \eqref{oneblock} that for $\sigma\in G_L$,
\begin{multline}
\label{eq:tildeFcase2}
\bEs \big[ \ind_{\{ \mB_1\in G_L\}} \ind_{\{\sigma_{mL} -\sigma_L \geq  d_{\tau} m L\}} \log Z_{mL,\gb}^{\sigma,f} (\tau_{mL} \leq \sigma_{mL})\big]\\
 \geq \frac18  \mF(\gb) L + \frac 12 \bEs[\ind_{\{\sigma_{(m-1)L}\geq d_{\tau} mL\}} \log \bPt( \tau_{mL}\leq \sigma_{(m-1)L}) ]\, .
\end{multline}

It remains only to control $\bEs[\ind_{\{\sigma_{(m-1)L}\geq d_{\tau} mL\}}\log \bPt(\tau_{mL}\leq \sigma_{(m-1)L})]$. To that end we have the following lemma which we prove in Section \ref{sec:lemmas}.
\begin{lemma}
\label{lem:logtausigma1}
If $\bE[\tau_1]\leq\bE[\sigma_1]<+\infty$, then, for any $\gep>0$, for sufficiently large $m$ we have
\begin{equation}
\label{eq:lem:logtausigma1}
\begin{split}
\lim_{L\to\infty}\frac{1}{L}\bEs[\ind_{\{\sigma_{(m-1)L} \geq d_{\tau} mL\}}\log \bPt( \tau_{mL}\leq \sigma_{(m-1)L}) ] \geq - \gep\, ,\\
\text{ and } \quad \lim_{L\to\infty}\frac{1}{L}\bEs[\log \bPt( \tau_{mL}\leq \sigma_{mL}) ] \geq - \gep\, .
\end{split}
\end{equation}
\end{lemma}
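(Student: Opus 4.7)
\textbf{Proof proposal for Lemma \ref{lem:logtausigma1}.}

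The plan is to split the $\bEs$-expectation along a $\bPs$-typical set $T_L$ for $\sigma$, apply a downward large-deviation lower bound for $\tau_{mL}$ on $T_L$, and dispose of the complement by the crude bound $\bPt(\tau_{mL}\leq x)\geq \bPt(\tau_1=d_\tau)^{mL}$, valid whenever $x\geq d_\tau mL$ and giving a log-contribution linear in $L$. The key tool is Cramer's theorem for the lower tail: since $\tau_1\geq d_\tau>0$, the MGF $M_\tau(\lambda)=\bEt[e^{\lambda\tau_1}]$ is finite for all $\lambda\leq 0$ with no further moment hypothesis, so the rate function
\[
J_\tau(x)=\sup_{\lambda\leq 0}\bigl(\lambda x-\log M_\tau(\lambda)\bigr)
\]
satisfies $\liminf_{n\to\infty} n^{-1}\log\bPt(\tau_n\leq xn)\geq -J_\tau(x)$ for $x\in(d_\tau,\mu_\tau)$. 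Since $J_\tau(\mu_\tau)=J_\tau'(\mu_\tau^-)=0$ (the latter by Fenchel duality), one has $J_\tau(\mu_\tau-\kappa)=o(\kappa)$ as $\kappa\to 0$, and hence $m\,J_\tau(\mu_\tau-C/m)\to 0$ for any constant $C$.

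Write $\mu:=\bEt[\tau_1]\leq\bEs[\sigma_1]=:M$. Fix $\varepsilon>0$ and pick $m$ large enough that $m\,J_\tau(\mu-2\mu/m)\leq \varepsilon/2$. Define $T_L:=\{\sigma:\sigma_{(m-1)L}\geq(M-\mu/m)(m-1)L\}$, for which $\bPs(T_L^c)\to 0$ by the law of large numbers (since $\mu/m>0$). A direct calculation shows that on $T_L$ and for $m$ large, $\sigma_{(m-1)L}\geq(\mu-\mu/m)(m-1)L=\mu(m-2+1/m)L\geq(\mu-2\mu/m)\,mL$, so the LDP gives $L^{-1}\log\bPt(\tau_{mL}\leq\sigma_{(m-1)L})\geq -\varepsilon/2+o(1)$ uniformly on $T_L$ as $L\to\infty$. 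The crude-bound contribution from $\{\sigma_{(m-1)L}\geq d_\tau mL\}\cap T_L^c$ to $L^{-1}\bEs[\cdot]$ is at most $m\,|\log\bPt(\tau_1=d_\tau)|\cdot\bPs(T_L^c)$, which vanishes as $L\to\infty$. This yields the first inequality of \eqref{eq:lem:logtausigma1}; the second is identical with $mL$ replacing $(m-1)L$, noting that $\sigma_{mL}\geq d_\sigma mL\geq d_\tau mL$ automatically by \eqref{nozero}.

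The main obstacle is the boundary case $\mu_\tau=\mu_\sigma$: here the typical value $\sigma_{(m-1)L}\approx\mu(m-1)L$ falls short of the typical value $\tau_{mL}\approx\mu mL$ by $\mu L$, so $\{\tau_{mL}\leq\sigma_{(m-1)L}\}$ is a bona fide downward large deviation of $\tau$ whose log-probability decays linearly in $L$ at rate $m\,J_\tau(\mu-O(1/m))$. Diluting this rate below $\varepsilon$ requires taking $m$ large, which is permitted by the quantitative smoothness $J_\tau(\mu-\kappa)=o(\kappa)$; this in turn depends critically on the fact that the downward Cramer bound holds without any upper-tail moment assumption on $\tau_1$, since $\tau_1$ is bounded below.
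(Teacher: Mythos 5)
Your proof is correct and follows essentially the same route as the paper's: split along a $\bPs$-typical set for $\sigma_{(m-1)L}$, apply the one-sided Cramér lower bound on the typical part, and control the atypical part by the crude bound $\bPt(\tau_1=d_\tau)^{mL}$ together with the vanishing probability of the atypical event. The only cosmetic difference is your cutoff $(M-\mu/m)(m-1)L$ versus the paper's $(m-2)L\mu_\tau$; both implement the same idea, and the crucial smoothness fact $J_\tau(\mu_\tau-\kappa)=o(\kappa)$ (hence $m\,J_\tau(\mu_\tau-C/m)\to 0$) is invoked in the same way.
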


We choose $\gep=\frac{1}{40} \mF(\gb)$, and fix $m_1\geq m_0$ such that one can apply Lemma \ref{lem:logtausigma1}. Then, we take $L$ large enough  such that $\frac{1}{L}\bEs[\ind_{\{\sigma_{(m_1-1)L} \geq d_{\tau} m_1L\}}\log \bPt( \tau_{m_1L}\leq \sigma_{(m_1-1)L}) ]\geq - \frac{1}{32} \mF(\gb) $ and also $\frac{1}{L}\bEs[\log \bPt( \tau_{m_1 L}\leq \sigma_{m_1 L}) ] \geq - \frac{1}{32} \mF(\gb)$. Combining this with \eqref{eq:tildeFcase1} and \eqref{eq:tildeFcase2}, we obtain for sufficiently large $L$
\begin{equation}
\label{eq:boundlogZ}
\bEs \big[ \log Z_{m_1 L,\gb}^{\sigma,f}(\tau_{m_1 L} \leq \sigma_{m_1 L}) \big] \geq  \frac{1}{16} \mF(\gb) L \, .
\end{equation}

To conclude the proof of Proposition \ref{prop:compareFtilde}, we use that $(\log\hZ_{N,\gb}^{\sigma})_{N\in\NN}$ is an ergodic super-additive sequence, so that $\hat\mF(\gb) \geq \sup_{N\in\NN} \frac{1}{N} \bEs[\log\hZ_{N,\gb}^{\sigma}] $. In view of \eqref{tildettilde}, we therefore have that, if $L$ is large enough, for $q$ as specified after \eqref{NqvsN},
\begin{equation}
\hat\mF(\gb) \geq \frac{1}{m_1 L+ q} \bEs \big [\log Z_{m_1 L,\gb}^{\sigma,f}(\tau_{m_1 L} \leq \sigma_{m_1 L}) \big] - \frac{3(2+\ga)}{\tga} \frac{\log( m_1L)}{m_1 L+q}\, .
\end{equation}
Then we can use \eqref{eq:boundlogZ} to obtain, by taking $L$ large enough,
\begin{equation}
\hat\mF(\gb) \geq \frac{1}{m_1 L+q}\ \frac{1}{20} \mF(\gb) L \geq \frac{1}{30 m_1} \mF(\gb) > 0\, .
\end{equation}

\subsection{Proof of Lemmas \ref{lem:logtausigma}-\ref{lem:logtausigma1}}
\label{sec:lemmas}

\begin{proof}[Proof of Lemma \ref{lem:logtausigma}]
We have the following crude bound: there exists a constant $c_{40}>0$ such that, for every $k\geq d_\tau n$, 
\begin{equation}
\label{tausmall}
\bPt(\tau_n\leq k) \geq  \bPt\big( \max_{1\leq i\leq n} \{\tau_{i}- \tau_{i-1}\}  \leq k/n \big) =\Big(1- \bPt\big(\tau_1> \tfrac{k}{n} \big) \Big)^{n}  \geq e^{-c_{40}\,  \bPt(\tau_1> \frac{k}{n})\, n }\, ,
\end{equation}
where the last inequality follows from the fact that $\bPt\big(\tau_1> \tfrac{k}{n} \big)$ is bounded away from 1, for all $k\geq d_\tau n$.
%One concludes by using \eqref{eq:alphas} which gives that $\bPt(\tau_1 \geq x) \leq c'' \gp(x) x^{-\ga} $.

Since $\bEs[\sigma_1]=+\infty$, we may choose a sequence $\ga_n$ with $\ga_n/n\to +\infty$, and uniformly in $x\geq 1/10$, $\bPs(\sigma_{xn} \leq \ga_n) \stackrel{n\to\infty}{\to} 0$. We get
\begin{align}
0&\geq \frac1n \bEs\big[\ind_{\{\sigma_{xn} \geq d_\tau n\}}\log \bPt(\tau_n \leq \sigma_{xn}) \big]  \notag\\
& \geq \frac1n \bPs(\sigma_{xn} > \ga_n) \log \bPt(\tau_n \leq \ga_n) + \frac1n \bPs(\sigma_{xn} \leq \ga_n) \log \bPt(\tau_1=d_\tau)^n \notag\\
&\geq   - c_{41}\,  \bP(\tau_1 >\ga_n/n) +  \bPs(\sigma_{xn} \leq \ga_n) \,  \log \bPt(\tau_1=d_\tau) ,
\end{align}
where we used \eqref{tausmall} in the second inequality. Letting $n\to\infty$, we see that the limit is $0$ thanks to our choice of $\ga_n$.
\end{proof}

\begin{proof}[Proof of Lemma \ref{lem:logtausigma1}]
From the standard large deviation principle for i.i.d.~sums, we  can define the rate function
\begin{equation}
\label{ldp}
\bar J_\tau(t):=J_\tau(\mu_\tau-t) = \lim_{n\to\infty} -\frac1n \log \bPt(\tau_n \leq \mu_{\tau} n  - t n) , 
\end{equation} 
with $X:=\mu_{\tau}-\tau_1$, and $J_\tau$ is defined after \eqref{ratefunc}.  We define $\bar J_\sigma$ analogously.

It is standard that, since $E[X]=0$, we have $\bar J_\tau(t) = o(t)$ as $t\searrow0$.
Therefore for fixed $\gep>0$, for large $m$ we have $\bar J_\tau(2\mu_\tau/m)\leq \gep/m$.
We also have, using that $\mu_{\tau}\leq \mu_{\sigma}$, that for any $m\geq 2$
\[
    \bPs\left(\sigma_{(m-1)L} \leq (m-2)L\mu_{\tau} \right) \to 0\quad \text{as } L\to\infty .
   \]
Hence, observing that $(m-2) \mu_{\tau} \geq  d_{\tau} m$ provided that $m$ has been fixed large enough, we can get
\begin{align*}
\bEs &\big[\ind_{\{\sigma_{(m-1)L} \geq  d_{\tau} m L\}}  \log \bPt(\tau_{m L} \leq \sigma_{(m-1)L}) \big]   \\
& \geq  \log \bPt \big( \tau_{mL} \leq (m-2)L\mu_{\tau} \big) + \bPs \big( \sigma_{(m-1)L} \leq (m-2)L\mu_{\tau} \big) \times m L \log \bPt(\tau_1=d_\tau) \, .
\end{align*}
Hence we obtain, using \eqref{ldp}
\begin{equation}
 \liminf_{n\to\infty} \frac1L \bEs[\ind_{\{\sigma_{(m-1)L} \geq m L\}}\log \bPt(\tau_{mL} \leq \sigma_{(m-1)L})] \geq - m\, \bar J_\tau(\frac{2\mu_{\tau}}{m}) \geq -\gep\, ,
\end{equation}
which gives the first line in \eqref{eq:lem:logtausigma1}.

Similarly, decomposing according to whether $\sigma_{mL}\leq (m-2)L \mu_{\tau}$ or not, we have the lower bound
\begin{align*}
\bEs &\big[  \log \bPt(\tau_{m L} \leq \sigma_{mL}) \big]   \\
& \geq  \log \bPt \big( \tau_{mL} \leq (m-2)L\mu_{\tau} \big) + \bPs \big( \sigma_{mL} \leq (m-2)L\mu_{\tau} \big) \times m L \log \bPt(\tau_1=d_\tau) \, ,
\end{align*}
which in turn also gives that
$ \liminf_{n\to\infty} \frac1L \bEs[\log \bPt(\tau_{mL} \leq \sigma_{mL})] \geq -\gep $.
\end{proof}

\begin{appendix}

\section{Technical results on renewal processes}

\subsection{Some estimates on renewal processes}
\label{sec:renewestimates}

First of all, we state a result that we use throughout the paper, which is Lemma A.2 in \cite{GLT09} (that was slightly generalized in \cite{ABinter} to cover the case $\ga=0$).
\begin{lemma}
\label{lem:condtioning}
Assume that $\bP(\tau_1=k) = \gp(k) k^{-(1+\ga)}$ for some $\ga\geq 0$ and some slowly varying function $\gp(\cdot)$.
Then, there exists a constant $C_0>0$ such that, for all sufficiently large $n$, for any non-negative function $f_n(\tau)$ depending only on $\tau\cap\{0,\ldots,n\}$, we have
\[\bE[f_n(\tau) \mid 2n \in\tau] \leq C_0 \bE[f_n(\tau)]\, .\]
\end{lemma}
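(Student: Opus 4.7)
The plan is to condition on the last renewal before $n$, $L_n := \max(\tau \cap [0,n])$. Since $f_n(\tau)$ depends only on $\tau \cap \{0,\dots,n\}$ and agrees with a function of $\tau \cap [0,k]$ on $\{L_n = k\}$, the Markov property at $k\in\Supp\tau_1$ gives
\begin{equation*}
\bE[f_n(\tau) \ind_{\{L_n = k\}}] = \bE[f_n(\tau) \ind_{\{k \in \tau\}}]\, \bP(\tau_1 > n-k),
\end{equation*}
\begin{equation*}
\bE[f_n(\tau) \ind_{\{L_n = k,\, 2n \in \tau\}}] = \bE[f_n(\tau) \ind_{\{k \in \tau\}}]\, \bP(\tau_1 > n-k,\, 2n-k \in \tau).
\end{equation*}
Summing on $k$ and dividing by $\bP(2n\in\tau)$, the lemma will follow as soon as we establish the uniform estimate
\begin{equation*}
\sup_{0 \leq r \leq n} \frac{\bP(\tau_1 > r,\, n+r \in \tau)}{\bP(\tau_1 > r)\, \bP(2n \in \tau)} \leq C_0.
\end{equation*}

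To prove this uniform estimate, I would write the numerator as $\sum_{s=r+1}^{n+r} \bP(\tau_1 = s)\, u(n+r-s)$ with $u(m) := \bP(m \in \tau)$, and split the sum at $s = (n+r)/2$. For $s \in (r, (n+r)/2]$, the gap $n+r-s$ is of order at least $n/2$, so the regular variation of $u$ coming from \eqref{Doney} yields $u(n+r-s) \leq C u(2n)$, and this piece is at most $C u(2n)\, \bP(\tau_1 > r)$, which directly matches the denominator. For $s \in ((n+r)/2,\, n+r]$, the jump $s$ is itself of order $n$, so $\bP(\tau_1 = s) \leq C \gp(n) n^{-(1+\ga)}$; combined with the standard estimate $\sum_{t=0}^{n} u(t) \leq C n^\ga/\gp(n)$ for $\ga \in (0,1)$ (another consequence of \eqref{Doney}), this piece is bounded by $C/n$, which again matches $\bP(\tau_1 > r)\bP(2n\in\tau)$ of the same order, using $\bP(\tau_1>r) \geq \bP(\tau_1>n)$ for $r \leq n$.

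The case $\ga \geq 1$ is essentially trivial because $u(n)$ is bounded away from zero, so the crude bound $\bP(\tau_1 > r,\, n+r \in \tau) \leq \bP(\tau_1 > r)$ already suffices. The main obstacle is the boundary case $\ga=0$, where \eqref{Doney} gives the more awkward $u(n) \sim (r_n^2\, n\, \gp(n))^{-1}$ and the partial sums $\sum_{t \leq n} u(t)$ are governed by the truncated mean $\olm(n)$ rather than a clean power of $n$; a finer comparison is then required, which is precisely the extension carried out in \cite{ABinter}. The assumption that $n$ be sufficiently large is needed so that $\bP(2n \in \tau) > 0$ and so the asymptotic estimates of \eqref{Doney} yield constants that are uniform in $r$.
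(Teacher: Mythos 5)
The paper does not prove this lemma; it simply cites it as Lemma A.2 of \cite{GLT09}, with the $\ga=0$ extension in \cite{ABinter}. Your reconstruction — conditioning on the last renewal $L_n$ in $[0,n]$, factoring out $\bE[\tilde f_k(\tau)\ind_{\{k\in\tau\}}]$ (where $\tilde f_k$ is the function of $\tau\cap[0,k]$ that agrees with $f_n$ on $\{L_n=k\}$; this is what you mean rather than $\bE[f_n(\tau)\ind_{\{k\in\tau\}}]$ as written, but the slip is harmless since the same factor cancels), and reducing to the uniform ratio bound — is exactly the standard argument that the cited proof uses, and your handling of the ratio for $\ga\in(0,1)$ via the split at $s=(n+r)/2$, regular variation of $u$, and the Karamata bound $\sum_{t\le n}u(t)\asymp n^\ga/\gp(n)$ is correct.

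There is, however, a genuine error in your dismissal of the case $\ga\geq 1$. You claim it is ``essentially trivial because $u(n)$ is bounded away from zero, so the crude bound $\bP(\tau_1>r,\,n+r\in\tau)\leq\bP(\tau_1>r)$ already suffices.'' This is true for $\ga>1$ (and for $\ga=1$ with $\bEt[\tau_1]<\infty$), but fails when $\ga=1$ and $\bEt[\tau_1]=+\infty$: there \eqref{Doney} gives $u(n)\sim \olm(n)^{-1}\to 0$, so the denominator $\bP(2n\in\tau)$ does tend to $0$ and the crude bound is off by an unbounded factor. This case is not vacuous for the paper — it is precisely the regime $\ga^*=1$, $\bE_\nu[\nu_1]=+\infty$ in Step~4 of the proof of Lemma~\ref{lem:eventJ}, where Lemma~\ref{lem:condtioning} is invoked. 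The good news is that your Part~1/Part~2 decomposition still works there with minor adjustments: in Part~1 one uses that $u$ is slowly varying so $u(m)/u(2n)$ is bounded for $m\in[n/2,n]$; in Part~2 one replaces $\sum_{t\le n}u(t)\asymp n^\ga/\gp(n)$ by $\sum_{t\le n}u(t)\asymp n/\olm(n)$ (another consequence of \eqref{Doney} and Karamata), giving Part~2 $\lesssim \gp(n)/(n\olm(n))$, which is of the same order as $\bP(\tau_1>n)\,\bP(2n\in\tau)$. So you should run the same two-part argument for $\ga=1$ rather than declaring it trivial. The acknowledgement that $\ga=0$ requires the finer comparison from \cite{ABinter} is fair and matches the paper's own disposition of that case.
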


\medskip
In the rest of this section, we consider a renewal $\tau$ satisfying \eqref{eq:alphas} with $\ga\in (0,1)$.
Similar results exist for $\ga=0$ (see \cite{ABzero}) and for $\ga>1$ (see Appendix~A in \cite{BLpin}), but we do not need them here.

\smallskip
As noted in Section \ref{sec:notations}, for $a_n$ as in \eqref{defanbn2}, $\tau_n/a_n$ converges to an $\ga$-stable distribution with some density $h$, which is bounded and satisfies
\begin{equation} \label{stabletail}
  h(x) \sim C_1 \, x^{-(1+\ga)} \quad \text{as } x \to \infty, \quad h(x) \leq C_2 x^{-(1+\ga)} \quad \text{for all } x>0\, .
\end{equation}
Further, by the local limit theorem for such convergence, see \cite[\textsection 50]{GnedenkoKolmo}, and the fact that $a_k^\ga\sim k\gp(a_k)$, for any given $0<\theta<K<\infty$ we have as $k\to\infty$
\begin{equation} \label{locallim}
\begin{split}
  \bPt(\tau_k=m) \sim \frac{1}{a_k} h \left( \frac{m}{a_k} \right) \leq C_2 a_k^\ga m^{-(1+\ga)}  \leq 2C_2 km^{-(1+\ga)}\gp(m) \\  
   \text{ uniformly over } m \in [\theta a_k,Ka_k].
  \end{split}
\end{equation}
Moreover, Doney \cite[Thm. A]{Doney} gives that,
uniformly in $m\gg a_k$,
\begin{equation}
\label{locallargedev}
\bPt(\tau_k = m) = (1+o(1))\,  k\, \bPt(\tau_1=m) \quad \text{as $k\to\infty$} .
\end{equation}
Together with \eqref{locallim} and Lemma \ref{smallk} below, which deals with the case $m\ll a_k$, we thereby obtain the following uniform bound.

\begin{lemma}
\label{lem:uniform}
Assume $\ga\in(0,1)$.
There exists a constant $c_{42}>0$ such that, for $k$ large enough and all $m\geq k$,
\[\bPt(\tau_k = m ) \leq  c_{42} \, k \, \bPt (\tau_1 = m) \, ,\]
and 
\[\bPt(\tau_k \geq m) \leq c_{42}\, k\, \bPt(\tau_1 \geq m) \, .\]
\end{lemma}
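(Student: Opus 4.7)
The plan is to establish the pointwise bound $\bPt(\tau_k = m) \leq c_{42} k\, \bPt(\tau_1 = m)$ for all $m \geq k$ and $k$ large; the tail inequality then follows immediately by summing over $j \geq m$:
\[
  \bPt(\tau_k \geq m) = \sum_{j\geq m} \bPt(\tau_k = j) \leq c_{42} k \sum_{j\geq m} \bPt(\tau_1 = j) = c_{42} k\, \bPt(\tau_1 \geq m).
\]
So the task reduces to the pointwise estimate.

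I would partition the range $m \geq k$ into three sub-regimes according to the typical scale $a_k$, each handled by one of the tools gathered just before the lemma statement. First, for $m \geq K a_k$ with $K$ a large fixed constant, Doney's local large-deviation equivalence \eqref{locallargedev} gives $\bPt(\tau_k = m) \leq 2k\,\bPt(\tau_1 = m)$ uniformly, once $k,K$ are large. Second, for $\theta a_k \leq m \leq K a_k$ with $\theta > 0$ small and fixed, the local limit estimate \eqref{locallim} directly yields $\bPt(\tau_k = m) \leq 2C_2\, k\,\bPt(\tau_1 = m)$. The delicate case is the compressed regime $k \leq m < \theta a_k$.

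For regime (iii), I would invoke Lemma \ref{smallk}, which supplies super-polynomial decay of $\bPt\bigl(\tau_k \leq c\, a_k/(\log k)^\beta\bigr)$. Consequently $\bPt(\tau_k = m)$ is smaller than any negative power of $k$ once $m$ lies well below $a_k$. On the other hand, for $m\in [k,\theta a_k]$, the right-hand side satisfies $k\,\bPt(\tau_1 = m) \geq k\varphi(m) m^{-(1+\alpha)}$, which, using that $a_k$ grows polynomially in $k$ (because $\alpha<1$) and $\varphi$ is slowly varying, still decays only as a negative power of $k$. Hence the ratio stays bounded uniformly. The sliver $a_k/(\log k)^\beta \leq m \leq \theta a_k$ not directly controlled by Lemma \ref{smallk} I would cover by extending the local limit estimate \eqref{locallim} slightly below $\theta a_k$, leveraging that the bound $h(x) \leq C_2 x^{-(1+\alpha)}$ from \eqref{stabletail} holds for \emph{all} $x>0$, in conjunction with a strong local limit theorem in the $\alpha$-stable domain of attraction.

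The main obstacle is regime (iii): the super-polynomial concentration bound requires $m$ substantially smaller than $a_k$, while the local limit estimate is formulated for $m$ of order $a_k$, and joining these two bounds smoothly across the intermediate scale $a_k/(\log k)^\beta$ is the only nontrivial point. The assumption $\alpha<1$ is used crucially both to obtain the Lemma \ref{smallk} concentration and to guarantee that $a_k$ grows like a positive power of $k$.
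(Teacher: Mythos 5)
Your decomposition into three regimes governed by the scale $a_k$ — Doney's local large-deviation asymptotics \eqref{locallargedev} for $m > Ka_k$, the local limit theorem \eqref{locallim} for $m\asymp a_k$, and Lemma~\ref{smallk} for $m$ well below $a_k$ — is exactly what the paper has in mind; the paper does not write out a separate proof, remarking in the paragraph preceding the statement only that the lemma follows by combining these three ingredients. You also correctly flag the one place where the combination is not immediate: for $m$ of the same order as $a_k$ but below $\theta a_k$, Lemma~\ref{smallk} gives merely $\bPt(\tau_k\leq m)\leq\exp\bigl(-c_{43}(a_k/m)^{3\alpha/(4(1-\alpha))}\bigr)$, a quantity bounded below by a constant once $m/a_k$ is bounded away from $0$, whereas the target $c_{42}\,k\,\bPt(\tau_1=m)$ is of order $1/a_k\to 0$ there; so your interim claim that ``the ratio stays bounded uniformly'' on all of $[k,\theta a_k]$ is not actually established by Lemma~\ref{smallk} alone, which is precisely why you are forced to treat the sliver separately. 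Your proposed fix via a strong local limit theorem is the correct one, and in fact it renders Lemma~\ref{smallk} superfluous for this particular estimate: the Gnedenko--Kolmogorov LLT in its uniform form gives $\bPt(\tau_k=m)\leq(\sup h + 1)/a_k$ for all $m$ once $k$ is large, and since $m\mapsto m^{1+\alpha}/\varphi(m)$ is regularly varying of positive index, a Potter bound combined with $a_k^\alpha\sim k\varphi(a_k)$ shows that $k\,a_k\,\bPt(\tau_1=m)$ is bounded away from $0$ uniformly over $m\in[k,Ka_k]$, handling the entire range $k\leq m\leq Ka_k$ — including the sliver — in a single step.
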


For the lower tail we have the following.

\begin{lemma} \label{smallk}
Assume $\ga \in (0,1)$.  
%There exist some slowly varying function $\phi$, decreasing to $0$, and some $C_2,n_0$ such that for all $0<\epsilon\leq 1/2$ and $n\geq n_0$,
%\[
%  \bPt(\tau_n \leq \epsilon\, a_n) \leq \exp\left( -\frac{C_2}{\epsilon^{\ga/(1-\ga)}} \phi\left(\tfrac{1}{\epsilon} \right) \right).
%\]
%Since $\phi$ is a slowly varying function, $\phi(x)$ decreases to $0$ slower than any power of $x$. It gives that 
There exist a constant $c_{43}$ such that, for all $\epsilon <1/2$ and $n\geq 1$
\begin{equation}
\label{eq:smallk}
\bPt(\tau_n \leq \epsilon\, a_n) \leq \exp\left( - c_{43}\, \epsilon^{- \frac34 \frac{\ga}{1-\ga}} \right).
\end{equation}
In particular, for any $\gamma\in(0,1)$, for all $n\geq 1$,
\[\bPt(\tau_n \leq n^{(1-\gamma)/\ga}) \leq \exp \left( - c_{44}\,  n^{\frac{\gamma }{2 (1-\ga)}}\right) \, .\]
\end{lemma}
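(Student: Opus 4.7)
The plan is to bound $\bPt(\tau_n \leq \epsilon a_n)$ by the standard exponential Markov argument on truncated increments, tailored to the $\alpha$-stable domain of attraction. Writing $\tau_n = \sum_{i=1}^n \Delta_i$ with $\Delta_i = \tau_i - \tau_{i-1}$ i.i.d., and using $\tau_n \geq \sum_i (\Delta_i \wedge M)$, for any $\lambda, M > 0$ one has
\[
  \bPt(\tau_n \leq \epsilon a_n) \leq e^{\lambda \epsilon a_n}\bigl(\bEt[e^{-\lambda(\Delta_1 \wedge M)}]\bigr)^n.
\]
Applying $e^{-y} \leq 1 - y + y^2/2$ for $y\geq 0$ together with $\log(1+x) \leq x$ gives
\[
  \log \bEt[e^{-\lambda(\Delta_1 \wedge M)}] \leq -\lambda \bEt[\Delta_1 \wedge M] + \tfrac{\lambda^2}{2}\bEt[(\Delta_1 \wedge M)^2].
\]
For $\ga\in(0,1)$, Karamata's theorem applied to $\bPt(\tau_1 > k) \sim \gp(k) k^{-\ga}/\ga$ yields $\bEt[\Delta_1\wedge M] \sim \gp(M) M^{1-\ga}/(\ga(1-\ga))$ and $\bEt[(\Delta_1\wedge M)^2] \sim 2\gp(M) M^{2-\ga}/(\ga(2-\ga))$. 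Choosing $\lambda = c/M$ with $c>0$ small enough that the quadratic term is dominated by half the linear term, one arrives, for some $c_1>0$, at
\[
  \log \bPt(\tau_n \leq \epsilon a_n) \leq \frac{c\epsilon a_n}{M} - c_1\, n\, \gp(M) M^{-\ga}.
\]

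Next, optimize in $M$ by setting $M = t a_n$ with $t \in (0,1)$. The defining relation $n\gp(a_n)a_n^{-\ga}\asymp 1$ combined with Potter's bound $\gp(ta_n)/\gp(a_n) \geq (1-\gd) t^{\gd}$ (valid for arbitrary fixed $\gd>0$ once $n$ is large and $t<1$) gives
\[
  \log \bPt(\tau_n \leq \epsilon a_n) \leq \frac{c\epsilon}{t} - c_2\, t^{\gd - \ga}.
\]
The right-hand side is optimized by $t \asymp \epsilon^{1/(1-\ga+\gd)}$, producing the bound $-c_3\,\epsilon^{-(\ga-\gd)/(1-\ga+\gd)}$. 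Since $(\ga-\gd)/(1-\ga+\gd) \to \ga/(1-\ga)$ as $\gd\downarrow 0$, choosing $\gd$ small enough makes the exponent at least $\tfrac{3}{4}\cdot \ga/(1-\ga)$, proving \eqref{eq:smallk} for all sufficiently large $n$. The finitely many small-$n$ cases are absorbed into the constant $c_{43}$, noting that $\bPt(\tau_n \leq \epsilon a_n) = 0$ whenever $\epsilon a_n < n$, so the only $\epsilon$ that matters for bounded $n$ is bounded below by $n/a_n > 0$, making $\epsilon^{-c_{43}\ga/(1-\ga)}$ bounded.

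The second inequality follows by substituting $\epsilon = n^{(1-\gamma)/\ga}/a_n$ into \eqref{eq:smallk}: since $a_n = \psi(n) n^{1/\ga}$, one has $\epsilon \sim n^{-\gamma/\ga}\psi(n)^{-1}$ and therefore
\[
  \epsilon^{-\frac{3\ga}{4(1-\ga)}} = n^{\frac{3\gamma}{4(1-\ga)}}\,\psi(n)^{\frac{3\ga}{4(1-\ga)}}.
\]
Potter's bound applied to $\psi$ gives $\psi(n)^{3\ga/(4(1-\ga))} \geq n^{-\gamma/(4(1-\ga))}$ for all large $n$, so the exponent in the bound exceeds $n^{\gamma/(2(1-\ga))}$, giving the claim. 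The principal technical obstacle is uniform control of the slowly varying functions $\gp$ and $\psi$ at the optimized truncation scale $M = ta_n$, where $t$ may be arbitrarily small; the factor $3/4$ in the claimed exponent is exactly the slack required to absorb these Potter-bound corrections into the effective exponent.
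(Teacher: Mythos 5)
Your proof is correct and follows essentially the same strategy as the paper's: a Chernoff/exponential-tilting bound on $\bPt(\tau_n\leq\epsilon a_n)$, optimization of the tilt parameter, Potter bounds to control the slowly varying factors uniformly in the truncation scale, and a separate treatment of small $n$. The only difference is expository --- the paper invokes the Tauberian asymptotic $\gL(-t)\sim -c_\ga\, t^\ga\gp(1/t)$ for the cumulant generating function of $\tau_1$, whereas you re-derive the equivalent estimate by truncating the increments at level $M$ and combining a Taylor bound on $e^{-\lambda(\Delta_1\wedge M)}$ with Karamata's theorem; these are the same computation.
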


Note that this lemma implies that the density $h$ also satisfies
\begin{equation}
\label{hforsmall}
h(x)\leq \exp \Big( -c_{45}\, x^{-\frac{\ga}{2(1-\ga)}} \Big)\, ,
\end{equation}
for some constant $c_{45}>0$ and sufficiently small $x$.
%
%{\color{red} [FIX: Do we really get this with the same constant $c_{43}$? I don't see quite how.  It should be true for any strictly smaller constant, though, with $0<x\leq 1/2$ replaced by ``sufficiently small $x$.'']}

\begin{proof}[Proof of Lemma \ref{smallk}]
Let $\Lambda(t)=\log \bEt[e^{ t \tau_1}]$, so that for all $t>0$,
\begin{equation}
\label{expobound1}
\bPt(\tau_n \leq \epsilon\, a_n) \leq \exp( \epsilon\, a_n t + \Lambda(-t)\, n).
\end{equation}
By \eqref{eq:alphas} and standard properties of the Laplace transform, there is a constant $c_{\ga}$ such that
\begin{equation}\label{gLapprox}
  \Lambda(-t) \sim - c_{\ga} \, t^{\ga}\, \gp\left( \tfrac{1}{t} \right) \quad \text{as } t\searrow 0.
\end{equation}
We have $\Lambda(-t)< - d_\tau t$ for all $t\in(0,+\infty)$, and $\Lambda(-t)\sim -d_\tau t$ as $t\to+\infty$, for $d_\tau$ from \eqref{ddef}.
In the rest of the proof, we assume $\epsilon a_n \geq d_\tau n$, since otherwise the probability is $0$.

\smallskip
We can approximately optimize \eqref{expobound1} by taking $t=t_n=t_n(\epsilon)$ given by
\begin{equation}\label{tndef}
\frac{\gL(-t_n)}{t_n} = \frac{- 2 \epsilon a_n}{n}.
\end{equation}
Such a solution exists since  $t^{-1}\gL(-t)\to -\infty$ as $t\to 0$, $t^{-1}\gL(-t)\to -d_\tau$ as $t\to \infty$, and $2\epsilon a_n/n \geq 2d_\tau$. 
We therefore end up with
\begin{equation}
\label{expobound2}
\bPt(\tau_n \leq \epsilon\, a_n) \leq \exp( - \epsilon\, a_n t_n),
\end{equation}
so we need a lower bound on $a_nt_n$.

\smallskip
Let $c_{45}$ be given by $c_{45}^{-1}\gL(-c_{45})=-2d_\tau$; then $t_n \in(0, c_{45}]$, since $\epsilon a_n \geq d_\tau n$.  Letting $c_{46}$ large enough so that $\varphi(x)>0$ for all $x\geq c_{46}/c_{45}$, we then have that $\gp(c_{46}/t_n)>0$.  Let $\lambda=(1-\ga/4)/(1-\ga)$.  By \eqref{defanbn}, since $a_n\to\infty$ and $\epsilon a_n \geq d_\tau n$, there exists $n_0$ such that for $n\geq n_0$,
\begin{equation}\label{an1}
 4\epsilon a_n^{1-\ga}\gp(a_n) \geq \frac{2\epsilon a_n}{n} \geq 2d_\tau \quad \text{and} \quad
   \frac{d_\tau}{2a_n^{1-\ga}\gp(a_n)} \geq \left( \frac{\bar d_\tau}{a_n} \right)^{1/\lambda},
\end{equation}
which together show that $\epsilon^\lambda a_n \geq \bar d_\tau$. Therefore there exists $c_{47}$ such that 
\[
  \frac{\gp(\epsilon^\lambda a_n)}{\gp(a_n)} \geq c_{47}\epsilon^{\frac14 \ga},
\]
which with \eqref{gLapprox}, \eqref{tndef}, and the first inequality in \eqref{an1} shows that for some $c_{48}$,
\[
 (\epsilon^\lambda a_n)^{1-\ga} \gp(\epsilon^\lambda a_n) \geq c_{47}\epsilon a_n^{1-\ga}\gp(a_n) \geq \frac{c_{47}\epsilon a_n}{2n}
   \geq c_{48}\left( \frac{c_{46}}{t_n} \right)^{1-\ga} \gp\left( \frac{c_{46}}{t_n} \right).
\]
We emphasize that the constants $c_i$ and $n_0$ do not depend on $\epsilon$. It follows that $\epsilon^\lambda a_n \geq c_{49}/t_n$, or equivalently $\epsilon a_nt_n \geq c_{49}\epsilon^{-(\lambda-1)}$, which with \eqref{expobound2} completes the proof for $n\geq n_0$.

We finish by observing that for $q=\max_{n<n_0} a_n$, for all $n<n_0$ and $\epsilon\leq 1/2$ we have
\[
  \bPt(\tau_n\leq \epsilon a_n) \leq \bPt(\tau_1 \leq q) < 1,
\]
and $\bPt(\tau_n\leq \epsilon a_n)=0$ if $\epsilon<1/q$.  Therefore after reducing $c_{43}$ if necessary, \eqref{eq:smallk} also holds for $n<n_0$.
\end{proof}

Let us finally prove some analogue of Lemma \ref{smallk} in the case $\ga=1$ with $\bE[\tau_1]=+\infty$, that will be needed below. Recall in that case the definition \eqref{defanbn} of $a_n$, and that $\tau_n/a_n \to 1$ in probability.
\begin{lemma}
\label{smallk2}
Assume that $\ga=1$ and that $\bar m(x) = \bEt[\tau_1\wedge x] \to+\infty$.
For every $\gd>0$, there exists a constant $c_{\gd}>0$ such that, for any $\epsilon \in(0,1/2)$ and $n\ge 1$
\[\bPt\big( \tau_n \le \epsilon a_n \big)\leq \exp \big(- c_{\gd} \epsilon^{ - 1/\gd}\big) \, .\]
\end{lemma}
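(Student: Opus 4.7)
The plan is to mimic the proof of Lemma \ref{smallk}: exponential Chebyshev, optimization in the tilt parameter, and Potter bounds on a slowly varying function. The only genuine change is that the near-zero asymptotic of $\gL(-t):=\log\bEt[e^{-t\tau_1}]$ is qualitatively different in the $\ga=1$ infinite-mean case, and this is precisely why the resulting exponent $1/\gd$ can be taken arbitrarily large but no uniform polynomial rate in $\epsilon$ is available.

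First I would reduce to the nontrivial range $\epsilon a_n\geq n d_\tau$ (otherwise the probability is zero) and use
\[\bPt(\tau_n\leq\epsilon a_n)\leq\exp\!\big(\epsilon a_n t+\gL(-t)n\big),\qquad t>0.\]
The analytic input replacing \eqref{gLapprox} is the following: under \eqref{eq:alphas} with $\ga=1$, the tail $\bPt(\tau_1>x)\sim\gp(x)/x$ is regularly varying of index $-1$, so the truncated mean $\bar m(x)=\bEt[\tau_1\wedge x]$ is slowly varying, and by the Tauberian theorem in the infinite-mean case (see \cite{Bingham}),
\[\gL(-t)\sim-t\,\bar m(1/t)\quad\text{as }t\to 0^+.\]
Since $\gL(-t)/t$ is continuous, tends to $-\infty$ as $t\to 0^+$ (because $\bar m\to\infty$) and to $-d_\tau$ as $t\to\infty$, and $2\epsilon a_n/n\geq 2d_\tau$, I can define $t_n>0$ by $\gL(-t_n)/t_n=-2\epsilon a_n/n$, as in \eqref{tndef}, yielding $\bPt(\tau_n\leq\epsilon a_n)\leq\exp(-\epsilon a_n t_n)$. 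Combining the Tauberian asymptotic with $\bar m(a_n)\sim a_n/n$ from \eqref{defanbn}, the defining equation rewrites as $\bar m(1/t_n)\asymp\epsilon\,\bar m(a_n)$, and in particular $1/t_n<a_n$ for $\epsilon<1/2$.

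Given $\gd>0$, I will pick $\eta\leq\gd/(1+\gd)$, so that $1-1/\eta\leq-1/\gd$. Potter's bounds applied to the slowly varying $\bar m$ provide $x_\eta$ such that $\bar m(y)/\bar m(x)\geq\tfrac12(y/x)^\eta$ for all $x_\eta\leq y\leq x$. Taking $y=1/t_n,x=a_n$ (assuming $1/t_n\geq x_\eta$) yields $a_n t_n\gtrsim \epsilon^{-1/\eta}$, whence $\epsilon a_n t_n\gtrsim\epsilon^{1-1/\eta}\geq\epsilon^{-1/\gd}$, which is the sought bound in the principal regime.

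The main obstacle, exactly as in Lemma \ref{smallk}, will be handling the residual ranges where the Potter step does not apply directly: (a) when $1/t_n<x_\eta$ remains bounded, so that $\bar m(1/t_n)\asymp\epsilon\bar m(a_n)$ forces $\epsilon\asymp 1/\bar m(a_n)$ and hence $\epsilon a_n\lesssim n$; and (b) bounded $n$. In case (a), all the gaps $\tau_i-\tau_{i-1}$ must essentially be of order $1$, and a standard Cram\'er-type rate-function estimate based on $\bar m(1/t)\to\infty$ gives the much stronger bound $\bPt(\tau_n\leq\epsilon a_n)\leq p^n$ for some $p<1$; since $\epsilon^{-1/\gd}\lesssim\bar m(a_n)^{1/\gd}$ while $n\sim a_n/\bar m(a_n)$ and $a_n/\bar m(a_n)^{1+1/\gd}\to\infty$ (slowly varying functions grow slower than any positive power of $a_n$), this easily dominates $c_\gd\epsilon^{-1/\gd}$ for large $n$. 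Case (b) is handled exactly as at the end of the proof of Lemma \ref{smallk}: for bounded $n$ and $\epsilon\in(0,1/2)$ the probability is strictly less than $1$, so $c_\gd$ can be reduced to absorb the finitely many remaining cases.
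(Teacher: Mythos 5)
Your proof follows the paper's exactly: the same exponential Chebyshev bound, the same tilt $t_n$ defined by $\gL(-t_n)/t_n=-2\epsilon a_n/n$, the same Tauberian asymptotic $\gL(-t)\sim-t\bar m(1/t)$, and Potter's bounds on $\bar m$ to conclude $\epsilon a_n t_n\gtrsim\epsilon^{-1/\gd}$ (your $\eta$ is the paper's $\gd'=\gd/(1+\gd)$). The extra care you take with the residual ranges where Potter's inequality does not directly apply (bounded $1/t_n$ and bounded $n$) is sound and goes slightly beyond the paper's terse treatment, which simply asserts the key inequalities hold for all $n\geq 1$.
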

Of course, this result is not optimal when $\epsilon$ is not close to $0$, but it is sufficient for our purpose.

\begin{proof}
We use the same notations as for the proof of Lemma \ref{smallk}. The difference here is that, as $t\downarrow 0$, $\Lambda(-t) \sim  - t \bar m(1/t)$, where we recall that $\bar m(x) = \bEt[\tau_1\wedge x]$ diverges as a slowly varying function as $x\to\infty$. Hence, we still have that $t^{-1} \Lambda(-t) \to -\infty$ as $t\downarrow 0$, and as in \eqref{tndef} we may define $t_n = t_n(\epsilon)$ as the solution of 
\begin{equation}
\label{deftn2}
\Lambda(-t_n) t_n^{-1} = - 2\epsilon a_n /n \, .
\end{equation}
Then we estimate $t_n$: using that $a_n \sim n \bar m (a_n)$, and that $\Lambda(-t) \sim - t \mu(1/t)$, we get that there is a constant $c_{50}$ such that for any $n\geq 1$,
\[\bar m(1/t_n) \le c_{50} \epsilon \bar m(a_n) \, .\]
Using Potter's bound, we get that for any $\gd'>0$ there is a constant $c_{\gd'}$ such that for any $n \ge 1$ $\bar m (a_n)\le  c_{\gd'} ( t_n a_n)^{\gd'} \bar m(1/t_n) $. In the end, we obtain that there is a constant $\tilde c_{\gd'}$ (independent of $\epsilon$) such that for all $n\ge 1$
\[ c_{50} c_{\gd'} ( t_n a_n)^{\gd'} \ge \epsilon^{-1}  , \qquad i.e.\ \ t_n a_n \ge \tilde c_{\gd'} \epsilon^{-1/\gd'} .  \]
Therefore, we obtain as in \eqref{expobound2}
\[\bPt(\tau_n \le \epsilon a_n) \le \exp\big( - \epsilon a_n t_n \big) \le \exp\big(  - \tilde c_{\gd'} \epsilon^{1-1/\gd'} \big)\, ,\]
which ends the proof of Lemma \ref{smallk2}, by taking $\gd=\gd'/(1-\gd')$.
\end{proof}

\subsection{Proof of Lemma \ref{lem:Kstar} and of Proposition \ref{prop:annealed}}
\label{app:K*}

\begin{proof}[Proof of Lemma \ref{lem:Kstar}]
From \eqref{Doney}, we have that 
\begin{equation}
\label{renewaltheorem}
\bPt(k\in\tau) \stackrel{n\to\infty}{\sim} \varphi_0(k) \, k^{-(1-\ga\wedge 1)} ,\quad \text{ with } \varphi_0(x) =
\begin{cases}
\frac{\gp(x)}{\bP(\tau_1\geq x)^2} & \text{ if } \ga=0\, ,\\
 \frac{\ga \sin(\pi\ga)}{\ga} \gp(x)^{-1} & \text{ if } \ga\in(0,1)\, .
\end{cases}
\end{equation}

\medskip
Let us start with the case $\tga\in(0,1)$.
We fix $0<\theta<K<\infty$ and $\delta>0$, and (recalling the definition \eqref{defanbn} of $b_n$) we split $\bP_{\sigma,\tau}(\sigma_n\in\tau)$ into three sums:
\begin{align}\label{threesums}
  \bP_{\sigma,\tau}(\sigma_n\in\tau) &= \sum_{k=n}^\infty \bPs(\sigma_n=k)\bPt(k\in\tau) \notag\\
  &\leq \sum_{0 \leq j\leq\log(\theta b_n)} \bP_\sigma(\sigma_n \in (e^{-(j+1)}\theta b_n,e^{-j}\theta b_n])
    \max_{k \in (e^{-(j+1)}\theta b_n,e^{-j}\theta b_n]} \bP_\tau(k \in \tau) \notag\\
  &\qquad + \sum_{\theta b_n < k < Kb_n} \bPs(\sigma_n=k)\bPt(k\in\tau)\ \  + \ \ \bPs(\sigma_n \geq Kb_n) \max_{k \geq Kb_n} \bPt(k\in\tau).
\end{align}

Let us first focus on the second sum, which gives the main contribution (and is also a lower bound).
Using \eqref{locallim} and  \eqref{renewaltheorem}, it is asymptotic to 
\begin{equation} \label{second}
   \varphi_0(b_n)\, b_n^{-(1-\alpha\wedge 1)} \sum_{\theta b_n < k < Kb_n}  \frac{1}{b_n} h\left( \frac{k}{b_n} \right) \left( \frac{k}{b_n} \right)^{-(1-\ga\wedge 1)} 
    \stackrel{n\to\infty}{\sim} \varphi_0(b_n) \,  b_n^{-(1-\alpha \wedge 1)} \int_\theta^K x^{-(1-\ga \wedge 1)}h(x)\ dx.
\end{equation}

If $K$ is sufficiently large (depending on $\delta$), then thanks to \eqref{renewaltheorem}, the third sum on the right in \eqref{threesums} is bounded by
\[
 c_{51}\,  (K b_n)^{-(1-\ga\wedge 1)} \varphi_0(b_n) \leq \delta b_n^{-(1-\ga\wedge 1)} \varphi_0(b_n).
\]

If $\theta$ is sufficiently small (depending on $\delta$), then  $\phi(e^{j}/\theta)\geq (e^{j}/\theta)^{-\tga/2(1-\tga)}$, and thanks to Lemma \ref{smallk} and \eqref{renewaltheorem}, the first sum on the right in \eqref{threesums} can be bounded by
\begin{equation}\label{first}
  c_{52} \sum_{0 \leq j\leq\log(\theta b_n)} \exp\left( - c_{53} \Big( \frac{e^j}{\theta} \Big)^{\frac{\tga}{2(1-\tga)}}  \right) \times
    \left( \frac{\theta b_n}{e^j} \right)^{-(1-\ga\wedge 1)} \varphi_0\left( \frac{\theta b_n}{e^j} \right) \\
  \leq \delta  b_n^{-(1-\ga\wedge 1)} \varphi_0(b_n)\, .
\end{equation}

By \eqref{stabletail}, the integral in \eqref{second} remains bounded as $K\to\infty$, and it also remains bounded as $\theta\to 0$ thanks to \eqref{hforsmall}. Because $\delta$ is arbitrary and the second sum \eqref{second} is also a lower bound for $\bPst(\sigma_n\in\tau)$, it follows that 
\begin{equation}
\label{eq:asympPsigmaintau}
\bP_{\sigma,\tau}(\sigma_n\in\tau) \stackrel{n\to\infty}{\sim} \Big( \int_0^{+\infty} x^{-(1-\ga\wedge 1)} h(x)\ dx \Big) \times  b_n^{-(1-\alpha\wedge 1)}\varphi_0(b_n) .
\end{equation}

We therefore conclude, thanks to \eqref{defAnBn}, that
\[ \bP_{\sigma,\tau}(\sigma_n\in\tau) = \gp^*(n) n^{-(1+\ga^*)}, \]
for $\ga^*=(1-\ga\wedge 1 -\tga)/\tga$ and for some slowly varying $\varphi^*$, given asymptotically via \eqref{eq:asympPsigmaintau}:
\begin{equation}\label{phistar}
\varphi^*(n) \stackrel{n\to\infty}{\sim} c_{54}\, \tilde\gp(b_n)^{-(1-\ga\wedge 1)/\tga} \varphi_0(b_n) \quad \text{with}\quad  c_{54}=  \int_0^{+\infty} x^{-(1-\ga\wedge 1)} h(x)\ dx .
\end{equation}

\medskip

It remains to treat the case $\tga \ge1$. In that case, $\sigma_n /b_n$ converges in probability to $1$. We fix $\gd>0$ and $\gep>0$, and split the probability into
\begin{align}
\bPst\big(\sigma_n & \in \tau  \big) = \bPst\big( \sigma_n \le (1-\gep)b_n , \sigma_n \in\tau \big) \notag \\
&+ \bPst\big(  \sigma_n \in \big( (1-\gep)b_n ,(1+\gep) b_n \big) , \sigma_n\in\tau \big)  + \bPst\big(\sigma_n \ge (1+\gep)b_n , \sigma_n \in\tau \big)\, .
\label{threesums2}
\end{align}

The second term, which is the leading term, is bounded from below by
\begin{equation}
\bPst \big(  \sigma_n/b_n \in (1-\gep, 1+\gep) \big) \inf_{k\in \big( (1-\gep)b_n ,(1+\gep) b_n \big)} \bPt(k\in\tau) \ge (1-\gd) \gp_0(b_n) b_n^{-(1-\ga\wedge 1)} \, ,
\end{equation}
provided that $n$ is large enough, and that $\gep$ had been fixed small enough. To obtain this, we used \eqref{renewaltheorem}, and also that $\sigma_n/b_n$ converges to $1$ in probability.
Analogously, this is also bounded from above by $(1+\gd) \gp_0(b_n) b_n^{-(1-\ga\wedge 1)}$.

The last term in \eqref{threesums2} is bounded from above by
\begin{equation}
\bPst\big(\sigma_n \ge (1+\gep)b_n \big) \sup_{k\ge b_n} \bPt(k\in\tau) = o(1) \gp_0(b_n) b_n^{-(1-\ga\wedge 1)},
\end{equation}
where for the last identity holds we used \eqref{renewaltheorem}, together with the fact that $\sigma_n/b_n$ converges in probability to $1$.

It remains to bound the first term in \eqref{threesums2}. If $\bEs[\sigma_1]<+\infty$, then it is easy: there exists some $c_{\gep}>0$ such that
\begin{equation}
\bPs(\sigma_n \le (1-\gep) b_n ) \le e^{-c_{\gep} n}   = o( \gp_0(b_n) b_n^{-(1-\ga\wedge 1)} ) \, .
\end{equation}
Indeed, one simply uses that for any $\lambda>0$ $\bPs(\sigma_n \le (1-\gep) b_n ) \le e^{\lambda (1-\gep) b_n } \bE[e^{-\lambda \sigma_1}]^n$, and choose $\lambda = \lambda_{\gep}$ small enough so that $\bEs[e^{-\lambda \sigma_1}] \le 1- (1-\gep/2) \lambda \bEs[\sigma_1]$ (using that $1-\bEs[e^{-\lambda \sigma_1}] \sim \lambda \bEs[\sigma_1] $ as $\lambda\downarrow 0$).
We then get that, since $b_n=n \bEs[\sigma_1]$, $\bPs(\sigma_n \le (1-\gep) b_n ) \le \exp(- \tfrac12 \gep \lambda_{\gep} \bEs[\sigma_1] n)$.

When $\bEs[\sigma_1] =+\infty$, it is more delicate. We fix $\theta>0$ and bound the first term in \eqref{threesums2} by
\begin{align*}
\sum_{0\le j\le \log(\theta b_n)} &\bPs\big( \sigma_n \in (e^{-(j+1)} \theta b_n , e^{-j} \theta b_n] \big) \max_{k\in (e^{-(j+1)} \theta b_n , e^{-j} \theta b_n]} \bPt(k\in\tau) \\
&+ \bPs\big( \sigma_n \in (\theta b_n, (1-\gep) b_n]  \big) \max_{k \ge  \theta b_n} \bPt(k\in\tau)\, .
\end{align*}
The first term is treated as in \eqref{first}, using Lemma \ref{smallk2} in place of Lemma \ref{smallk}, and is bounded by $\gd\gp_0(b_n) b_n^{-(1-\ga\wedge 1)} $ provided that $\theta$ had been fixed small enough. For the second term, we use \eqref{renewaltheorem} to get that it is bounded by a constant times $ \bPs\big( \sigma_n \le(1-\gep) b_n  \big)  \theta^{-(1-\ga\wedge 1)} \gp_0(b_n) b_n^{-(1-\ga\wedge 1)}$, with $ \bPs\big( \sigma_n \le(1-\gep) b_n  \big)  \stackrel{n\to\infty}{\to} 0$.

In the end, and because $\gd$ is arbitrary, we proved that when $\tga\ge 1$
\[ \bPst\big(\sigma_n  \in \tau  \big) \stackrel{n\to\infty}{\sim} \gp_0(b_n) b_n^{-(1-\ga\wedge 1)}, \quad i.e.\ \ \bPst\big(\sigma_n  \in \tau  \big) =\gp^*(n) n^{-(1+\ga^*)},\]
with $\ga^* = (1- \ga\wedge 1 - \tga \wedge 1) /\tga\wedge 1$, and (recalling the definition \eqref{defanbn} of $b_n$) 
\[\gp^*(n) \stackrel{n\to\infty}{\sim} \gp_0(b_n)   \tolm(b_n)^{-(1-\ga \wedge 1)}\, .\]
\end{proof}

\begin{proof}[Proof of Proposition \ref{prop:annealed}]
We assume that $\gb>0$.
We use a Mayer expansion as in \eqref{expand1}, writing $e^{\gb} = 1+ (e^{\gb}-1)$, and expanding $(1+e^{\gb}-1)^{\sum_{n=1}^N \ind_{\sigma_n\in \tau}}$, to obtain
\begin{align*}
\bE_{\sigma} Z_{N,\gb}^{\sigma} = \sum_{m=1}^N (e^{\gb}-1)^m \sum_{1\le i_1 < \cdots< i_m =N} \bEst\Big[ \prod_{k=1}^{m} \ind_{\{\sigma_{i_k} \in\tau\}} \Big]\, .
\end{align*}
Then we may set $K^*_\gb(n) := (e^{\gb}-1) e^{-\mathtt{F} n} \bPst(\sigma_n \in\tau)$, where $\mathtt{F}$ is a solution to \eqref{eq:fann} if it exists, and set $\mathtt{F} = 0$ otherwise. Hence $(K^*_\gb(n))_{n\ge 1}$  is a sub-probability on $\NN$, and a probability if $\mathtt{F}>0$. We therefore get that
\[\bE_{\sigma} Z_{N,\gb}^{\sigma} = e^{\mathtt{F} N} \sum_{m=1}^N  \sum_{1\le i_1 < \cdots< i_m =N}  \prod_{k=1}^{m} K^*_{\gb}( i_k-i_{k-1}) = e^{\mathtt{F} N} \bP^*( N\in  \nu^* ) ,\]
where $\nu^*$ is a renewal process with inter-arrival distribution $(K^*_\gb(n))_{n\ge 1}$. Then because of Lemma~\ref{lem:Kstar}, we get that: either (i) $\mathtt{F}>0$ and then $\bP^*(N\in\nu^*)$ converges to the constant $\bE^*[\nu_1]^{-1}$, or (ii) $\mathtt{F}=0$ but $\sum_{n\ge 1} K^*_{\gb}(n) =1$ in which case $\ga^*\ge 0$ and $\bP^*(N\in \nu^*)$ is regularly varying with exponent $1-\ga^*\wedge 1$ (see \eqref{Doney} with $\gp,\ga$ replaced by $\gp^*,\ga^*$), or (iii) $\mathtt{F}=0$ and $\sum_{n\ge 1} K^*_{\gb}(n) <1$, in which case $\nu^*$ is transient (with $\ga^*\ge 0$), and  $\bP^*(N\in \nu^*)$ is asymptotically a constant multiple of $K^*_{\gb}(n)$, see for example \cite[App. A.5]{GBbook}. This shows that in any case $\lim_{N\to\infty} N^{-1} \log \bP^*(N\in\nu^*)=0$, and hence $\mF(\gb) =\mathtt{F}$.

\smallskip
From \eqref{eq:fann}, it is standard to obtain the critical point $\gb_c^{\a}= \log\big( 1+ \bEst[|\tau\cap\sigma|]^{-1}\big)$ and 
the behavior of $\mF(\gb)$ close to criticality, since the behavior of $\bPst(\sigma_n\in\tau)$ is known from Lemma \ref{lem:Kstar}. Let us recall briefly how this is done for the sake of completeness.

 In the recurrent case, \textit{i.e.}\ when $\sum_{n\ge 1} \bPst(\sigma_n\in\tau) = +\infty$ (and necessarily $\ga^* \in [-1,0]$), we use Theorem 8.7.3 in \cite{Bingham} to get that $\sum_{n\ge 1} e^{- \mathtt{F} n} \bPst(\sigma_n \in\tau)$ is asymptotically equivalent, as $\mathtt{F} \downarrow 0$, to an explicit constant times $\sum_{n=1}^{1/\mathtt{F}} \bPst(\sigma_n \in\tau) \sim L^*(1/\mathtt{F})\, (1/\mathtt{F})^{-\ga^*}$ for some slowly varying function $L^*(\cdot)$. Because of \eqref{eq:fann}, we therefore get that, as $\gb\downarrow 0$, $L^*(1/\mF(\gb))\, \mF(\gb)^{|\ga^*|} \sim \gb^{-1}$, and we obtain the critical behavior \eqref{fanncritic}.

In the transient case, \eqref{eq:fann} can be multiplied by $e^{\gb_c^{\a}} -1 = \bEst[|\tau\cap \sigma|]^{-1} >0$ (so that $K_{\gb_c^\a}^*(n)=(e^{\gb_c^\a}-1) \bPst(\sigma_n\in\tau)$ is a probability, equal to $K^*(n)$ of \eqref{eq:defK*}) to get 
\[\sum_{n\ge 1} e^{-\mathtt{F} n} K^*(n) = \frac{e^{\gb_c^{\a}} -1}{ e^{\gb}-1 }  \, .\]
This is the definition of the free energy of a standard homogeneous pinning model with inter-arrival distribution $K^*(n)$, and pinning parameter $\log((e^{\gb_c^\a}-1) /(e^{\gb}-1)) \stackrel{\gb\downarrow \gb_c^{\a}}{\sim} c(\gb-\gb_c^{\a})$ for some constant $c$, see \cite[Ch.~2]{GBbook}, in particular Equation (2.2) there. It is standard (as described in the previous paragraph) to obtain the critical behavior \eqref{fanncritic} from this using Lemma~\ref{lem:Kstar}.
\end{proof}

\end{appendix}

\bibliographystyle{plain}
\bibliography{biblio}

\end{document}